\numberwithin{equation}{section}
\newcommand{\lab}{\label}
\newcommand{\ben}{\begin{enumerate}}
\newcommand{\een}{\end{enumerate}}
\newcommand{\bea}{\begin{eqnarray}}
\newcommand{\ba}{\begin{array}}
\newcommand{\bean}{\begin{eqnarray*}}
\newcommand{\ea}{\end{array}}
\newcommand{\eea}{\end{eqnarray}}
\newcommand{\eean}{\end{eqnarray*}}
\newcommand{\beq}{\begin{equation}}
\newcommand{\eeq}{\end{equation}}
\newcommand{\bthm}{\begin{thm}}
\newcommand{\ethm}{\end{thm}}
\newcommand{\blem}{\begin{lem}}
\newcommand{\elem}{\end{lem}}
\newcommand{\bprop}{\begin{prop}}
\newcommand{\eprop}{\end{prop}}
\newcommand{\bcor}{\begin{cor}}
\newcommand{\ecor}{\end{cor}}
\newcommand{\bdfn}{\begin{dfn}}
\newcommand{\edfn}{\end{dfn}}
\newcommand{\brem}{\begin{rem}}
\newcommand{\erem}{\end{rem}}
\newcommand{\bpf}{\begin{proof}}
\newcommand{\epf}{\end{proof}}
\newcommand{\bfact}{\begin{fact}}
\newcommand{\efact}{\end{fact}}
\newcommand{\bobs}{\begin{obs}}
\newcommand{\eobs}{\end{obs}}
\newtheorem{thm}{Theorem}[section]
\newtheorem{prop}[thm]{Proposition}
\newtheorem{lem}[thm]{Lemma}
\newtheorem{cor}[thm]{Corollary}
\newtheorem{dfn}[thm]{Definition}
\newtheorem{rem}[thm]{Remark}
\newtheorem{fact}[thm]{Fact}
\newtheorem{obs}[thm]{Observation}
             \def\cB{\mathcal B}       
             \def\cF{\mathcal F}
             \def\cV{\mathcal V}       
\def\cS{\mathcal S}
\def\cW{\mathcal W}
\def\endpf{\qed}
\def\N{{\mathbb N}}                \def\Z{{\mathbb Z}}      \def\R{{\mathbb R}}
\def\C{{\mathbb C}}                  
\def\Q{{\mathbb Q}}
\def\1{1\!\!1}
\def\and{\text{ and }}
        \def\diam{\text{\rm {diam}}}
      \def\lra{\longrightarrow}
\def\h{\rm{h}}
\def\hmu{\h_\mu}           
\def\H{\text{{\rm H}}}     \def\HD{\text{{\rm HD}}}   
\def\BD{\text{{\rm BD}}}         \def\PD{\text{PD}}
\def\Int{\text{{\rm Int}}}
         \def\P{\text{{\rm P}}}
                   \def\Pa{{\mathcal P}}
\def\a{\alpha}                \def\b{\beta}             \def\d{\delta}
               \def\e{\varepsilon}          
\def\g{\gamma}                \def\Ga{\Gamma}           \def\l{\lambda}
\def\La{\Lambda}              \def\om{\omega}           
               \def\sg{\sigma}
\def\Th{\Theta}               \def\th{\theta}           
\def\ka{\kappa}
\def\bi{\bigcap}              \def\bu{\bigcup}
\def\({\bigl(}                \def\){\bigr)}
\def\lt{\left}                \def\rt{\right}
\def\ld{\ldots}                        \def\^{\tilde}
\def\es{\emptyset}            \def\sms{\setminus}
\def\sbt{\subset}             \def\spt{\supset}
\def\gek{\succeq}             \def\lek{\preceq}
      \def\imp{\Rightarrow}
\def\comp{\asymp}
           \def\downto{\searrow}
\def\sp{\medskip}             \def\fr{\noindent}        
\def\ov{\overline}            
\def\fr{\noindent}
\def\om{\omega}
\def\endpf{{\hfill $\square$}}
\newcommand{\ep}{\varepsilon}
\newcommand{\ga}{\gamma}
\begin{document}
%***********************************************
\title[]
{ \bf\large {\Large S}kew product Smale endomorphisms over countable shifts of finite type}
\date{\today}
% % author information %
\author[\sc Eugen MIHAILESCU]{\sc Eugen Mihailescu}
%\\[0.5cm]\rm \large Preliminary Version}
%\address
\address{Eugen Mihailescu,
Institute of Mathematics of the Romanian Academy,
P.O Box 1-764,
RO 014700, Bucharest, Romania}
\email{Eugen.Mihailescu@imar.ro  \  \ \hspace*{0.5cm}
Web: www.imar.ro/$\sim$mihailes}
\author[\sc Mariusz URBA\'NSKI]{\sc Mariusz URBA\'NSKI}
\address{Mariusz Urba\'nski, Department of Mathematics,
 University of North Texas, Denton, TX 76203-1430, USA}
\email{urbanski@unt.edu \ \  \hspace*{0.5cm} Web: www.math.unt.edu/$\sim$urbanski}
%
% dedication %
%\dedicatory{}
%
% AMS information %
\date{}
\thanks{Research of the first author supported in part by project PN III-P4-ID-PCE-2016-0823 Dynamics and Differentiable Ergodic Theory,  from UEFISCDI Romania. Research of the second author supported in part by the NSF Grant DMS 0400481.}

\begin{abstract}
We introduce and study skew product Smale endomorphisms over finitely irreducible  shifts with countable alphabets. This case is different from the one with finite alphabets and we develop new methods. In the conformal context we prove that almost all conditional measures of equilibrium states of summable H\"older continuous potentials are exact dimensional, and their dimension is equal to the ratio of (global) entropy and Lyapunov exponent. We show that the exact dimensionality of conditional measures on fibers implies global exact dimensionality of the original measure.  We then study equilibrium states for skew products over expanding Markov-R\'enyi transformations, and settle the question of exact dimensionality of such measures. We apply our results to skew products over the continued fractions transformation. This allows us to extend and improve the Doeblin-Lenstra Conjecture on Diophantine approximation coefficients, to a larger class of measures and irrational numbers. 
\end{abstract}
\maketitle
\textbf{MSC 2010:} 37D35, 37A35, 37C45, 37A45, 46G10, 60B05.

\textbf{Kewords:} \  Skew product endomorphisms, equilibrium measures, exact dimensional measures, dimensions, stable sets, entropy, natural extensions, continued fractions.

\tableofcontents
\section{Introduction} We introduce and explore skew product Smale endomorphisms modeled on  subshifts of finite type with countable alphabets.  We study the thermodynamic formalism for skew product Smale endomorphisms over countable-to-1 maps, in particular natural extensions of countable--to--1 endomorphisms, such as  EMR (expanding Markov-R\'enyi maps), Gauss map, etc. The case of countable alphabet is different from the one with finite alphabet, since limit sets are usually \textit{non-compact} and new methods are needed to prove exact dimensionality of measures. Also our notion of Smale space is different, although inspired, by the respective notion from \cite{Ru}.

One of our objectives is to develop the thermodynamic formalism of such dynamical systems. In order to do this, we first recall in Section \ref{1sidedshift} the foundations of thermodynamics formalism of one-sided subshifts of finite type modeled on a countable (either finite or infinite) alphabet, from \cite{gdms}, \cite{MU_Israel}. Passing on to the two--sided shifts in Section~ \ref{2sidedshift}, we  provide a  thermodynamic formalism of  
H\"older continuous summable potentials with respect to two--sided subshifts of finite type. This also includes a characterization of Gibbs states in terms of conditional measures which has no counterpart for one sided shifts.

We then define in Section \ref{con} skew product Smale endomorphisms, modeled on countable alphabet subshifts of finite type, and specify its several significant subclasses. 
We define them in Section \ref{CSPSE} and study them in Section \ref{VL}. In Theorem \ref{t1vl4} we show that projections of a.e conditional measures of equilibrium states of summable 
H\"older continuous potentials are exact dimensional, and their dimension is the ratio of global entropy and Lyapunov exponent.  In the proof we develop new methods suited for the countable alphabet case. 
\newline
We prove in Theorem \ref{t1bf3} a version of Bowen's formula giving the Hausdorff dimension of each fiber as the zero of a pressure function; we deal also with the case when pressure function has no zero. Exact dimensional measures have a long history being studied in many settings, for eg \cite{BPS}, \cite{FH}, \cite{Pe}, \cite{PoW}, \cite{Y}, \cite{FM}, \cite{gdms},  \cite{M-stable}, \cite{MU-Adv}, \cite{PU},  to name a few.

Then, in Section \ref{generalskew} we pass to general skew products over \textit{countable-to-1 endomorphisms}. For
endomorphisms, the study of Hausdorff dimension is in general different than for invertible
systems and specific phenomena appear (for eg \cite{Ru2}, \cite{M-stable}, \cite{M-MZ}, \cite{FM}, \cite{M-Mon}, \cite{M-DCDS}, \cite{M-DCDS12}, \cite{MS}, \cite{MU-JFA}). We prove, under
a condition of $\mu$-injectivity for the coding of the base map, the exact dimensionality of
conditional measures of equilibrium measures in stable manifolds fibers, building on \cite{M-stable}.
 
 \fr We  consider general skew product endomorphisms 
$
F: X \times Y \lra X \times Y
$
of the form
$$
F(x, y) = (f(x), g(x, y)),
$$
over countable--to--1 endomorphisms $f: X \to X$, where $X$ is a metric space (not only $E_A^+$), and $Y \subset \R^d$. Then $f$ is coded by a symbol space with countably many symbols, and we find in Theorem~\ref{codedskews} a closed formula for  pointwise dimensions of conditional measures on fibers of $F$.
Then, in Theorem \ref{globalexact} we prove that, if the conditional measures of an equilibrium measures $\mu_\phi$ on fibers are exact dimensional, and if the projection of $\mu_\phi$ in the base is also exact dimensional, then  $\mu_\phi$ is exact dimensional \textit{globally}.

\fr Next, we study several main classes of skew product endomorphisms over countable-to-1 maps, in particular natural extensions (inverse limits); for dynamics on inverse limits see for eg \cite{Ru2}, \cite{M-DCDS12}, \cite{M-MZ}, \cite{M-stable}, \cite{M-Mon},  \cite{M-JSP11}, \cite{M-Cam}. \
In Section \ref{S-EMR} we study EMR (expanding Markov-R\'enyi) maps $f:I \to I$  and conformal Smale skew product  endomorphisms $F:I \times Y \to I\times Y$ over $f$. In Theorem \ref{EMRskews} we prove exact dimensionality of conditional measures on fibers of $F$. In particular, we consider the continued fraction Gauss map
$$
G(x) = \lt\{\frac 1x \rt\},  \   x \in (0, 1],
$$
coded by a countable alphabet; and the Manneville-Pomeau maps 
$$
f_\a(x)= x +x^{1+\alpha} \ (\text{mod} \ 1),  \  \  x\in [0, 1],
$$
$\alpha >0$. In Theorem \ref{f1exact} we provide a class of  equilibrium measures are exact dimensional globally on $I \times Y$.

In Section \ref{doeblin}, we apply our results to Diophantine approximation of irrational numbers $x$, and we generalize the \textit{Doeblin-Lenstra conjecture} about the approximation coefficients $\Theta_n(x)$ in continued fractions representation,  to equilibrium measures $\mu_s$ of potentials 
$$
-s\log|G'|:[0,1]\lra\R, \  \  s>1/2,
$$ 
where we recall that $G$ is the Gauss map $G(x) = \lt\{\frac 1x \rt\},  \   x \in (0, 1]$. If the continued fraction representation of an irrational number $x\in [0, 1)$ is 
$$
x = \frac{1}{a_1+\frac{1}{a_2+\frac{1}{a_3+\ldots}}} = [a_1, a_2, \ldots],
$$
with $a_i\ge 1$ being integer for all $i \ge 1$, and if 
$
\frac{p_n(x)}{q_n(x)} = [a_1, \ldots, a_n] \in \mathbb Q, n \ge 1,
$
then the approximation coefficients (see \cite{IK}) are: 
$$
\Theta_n(x) := q_n(x)^2\cdot \lt|x- \frac{p_n(x)}{q_n(x)}\rt|, \  \  n \ge 1
$$ 
\fr The original Doeblin--Lenstra Conjecture (for eg \cite{BJW}, \cite{IK}) gives information about the frequency of having consecutive pairs ($\Theta_k(x), \Theta_{k-1}(x)$) in some prescribed set, and it involves the lift of the Gauss measure $\mu_G$ to the natural extension space $[0, 1)^2$ of the continued fraction transformation; thus, it is valid for Lebesgue--a.e $x \in [0, 1)$. By contrast, in our case we take the numbers $x$ from the complement of this set. The natural extension $([0, 1)^2, \^G)$ of $G$ is  a skew product which falls into our setting. Hence, we can apply in Theorem~\ref{distr} the results obtained above. Using the exact dimensionality of  $\hat \mu_s$  on the natural extension, we also make the Doeblin--Lenstra Conjecture more precise. Namely in Theorem \ref{dioph}, we \textit{estimate the asymptotic frequency} of 
having the pairs $(\Theta_k(x), \Theta_{k-1}(x))$ $r$-close to pairs $(z, z')$, for $1 \le k \le n$ if $n$ becomes larger and larger, for all irrational numbers $x$ from a set measrable set $\Lambda_s \subset [0, 1)$ with $\mu_s(\Lambda_s)=1$ and $\HD(\Lambda_s) >0$. We emphasize that the Lebesgue measure of $\Lambda_s$ is zero, so it is not covered by the original Doeblin-Lenstra Conjecture. We provide its exact value in formula (2) of Theorem~\ref{dioph}.

\sp Our Smale skew product endomorphisms are related also  to the notion of \textit{chains with complete connections}, introduced in \cite{OM2} and studied for instance in \cite{OM2}, \cite{DF}, \cite{ITM}. Our endomorphisms are related also to \textit{iterated function systems with place-dependent probabilities} (see \cite{BDEG}), which are systems of contractions $\mathcal S = \{\phi_i:X \to X\}_{i \in I}$ with limit set $J_\cS$, where instead of the classical self--similar measure associated to a probability vector $(p_i, i \in I)$, one considers an invariant measure $\mu$ on $J_\cS$ associated to a probabilistic vector composed of variable weights $p_i:X\to [0,1]$, for $i \in I$. 

Many authors studied various related aspects of thermodynamic formalism and its relations to dimension theory, for eg \cite{BPS}, \cite{Bowen_Anosov}, \cite{Bo1}, \cite{Pe}, \cite{FM}, \cite{HMU}, \cite{MM} \cite{gdms}, \cite{MaU-ifs}, \cite{M-MZ}, \cite{M-DCDS}, \cite{M-JSP11}, \cite{M-DCDS12}, \cite{M-ETDS11}, \cite{M-Mon}, \cite{MS}, \cite{MU-Adv}, \cite{M-Cam},  \cite{MSU}, \cite{MU1}, \cite{MU2}, \cite{PS}, \cite{PoW}, \cite{PUZ I}, \cite{PUZ II}, \cite{PU}, \cite{RoUr}, \cite{Ru},  \cite{Sa}, \cite{Sch}, \cite{UZ1}, \cite{UZ2}, and this list is far from complete.

\section{One-Sided Thermodynamic Formalism}\label{1sidedshift}

\fr In this section we collect some fundamental ergodic (thermodynamic 
formalism) results concerning one-sided symbolic dynamics. All of 
them can be found with proofs in \cite{gdms}, \cite{MU_Israel}. Let $E$ be a countable set and let 
$A:E\times E\to\{0,1\}$ be a matrix. A finite or countable infinite 
tuple $\om$ of elements of $E$ is called $A$-admissible if and only 
if $A_{ab}=1$ for any two consecutive elements $a,b$ of $E$. 

The 
matrix $A$ is said to be \textit{finitely irreducible} if there exists a finite
set $F$ of finite $A$-admissible words so that for any two  
elements $a,b$ of $E$ there exists $\g \in F$ such that
the word $a\g b$ is $A$-admissible. In the sequel, 
the incidence matrix $A$ \textit{is assumed to be finitely irreducible}. \
Given $\b>0$, define the metric $d_\b$ on $E^\N$ by 
$$
d_\b\((\om_n)_0^{\infty},(\tau_n)_0^{\infty}\)
=\exp\(-\b\max\{n\ge 0:(0\le k\le n) \imp \om_k=\tau_k\}\)
$$
with the standard convention that $e^{-\infty}=0$. Note that all the
metrics $d_\b$, $\b>0$, on $E^\N$ are H\"older continuously equivalent
and they induce the product topology on $E^\N$. Let
$$
E_A^+=\{(\om_n)_0^{\infty}:\forall_{n\in\N}\ A_{\om_n\om_{n+1}}=1\}
$$
 $E_A^+$ is a closed subset of $E^\N$ and we endow it with
the topology and metrics $d_\b$ inherited from $E^\N$. The 
shift map $\sg:E^\Z\to E^Z$ is defined by the formula
$
\sg\((\om_n)_0^{\infty}\)=\((\om_{n+1})_{n=0}^{\infty}\),
$ and 
 $\sg(E_A^+)\sbt E_A^+$ and $\sg:E_A^+\to E_A^+$ is 
continuous. For every finite word $\om=\om_0\om_1\ld\om_{n-1}$, put $|\om|=n$ 
the length of $\om$, and 
$
[\om]=\{\tau\in E_A^+:\forall_{(0\le j\le n-1)}:\tau_j=\om_j\}$ is the \textit{cylinder} generated by $\om$. Let $\psi:E_A^+\to\R$ continuous, then
 the topological pressure $\P(\psi)$ is 
$$
\P(\psi)
=\lim_{n\to\infty}\frac1n\log\sum_{|\om|=n}\exp\(\sup\(S_n\psi|_{[\om]}\)\)
$$
and the limit exists, as the sequence 
$
\log\sum_{|\om|=n}\exp\(\sup\(S_n\psi|_{[\om]}\), n \in \mathbb N,
$
is sub-additive. The following theorem, a weaker version of the Variational Principle, was proved in \cite{gdms}.

\bthm\lab{2.1.5}
If $\psi:E_A^+\to\R$ is a continuous function and $\mu$ is a $\sg$-invariant Borel 
probability measure on $E_A^+$ such that $\int\psi\, d\mu>-\infty$, then 
$
\h_{\mu}(\sg)+\int_{E_A^+}\psi\, d\mu\le \P(\psi).
$
\ethm

\fr We say that the function $\psi:E_A^+\to\R$ is 
\textit{summable} if and only if
$$
\sum_{e\in E}\exp\(\sup\(\psi|_{[e]}\)\)<+\infty.
$$
A shift-invariant
Borel probability measure $\mu$ on $E_A^+$ is called a Gibbs state
of $\psi$ provided that there are a constant $C\ge 1$ and $P\in\R$
such that
\beq\label{1082305a}
C^{-1}
\le {\mu([\om])\over \exp(S_n\psi(\tau)-Pn)}
\le C
\eeq
for all $n\ge 1$, all admissible words $\om$ of length $n$ and all 
$\tau\in[\om]$. It clearly follows from (\ref{1082305a}) that if $\psi$ admits a Gibbs
state, then $P=\P(\psi)$.

\bdfn\label{1121905}
A function $g:E_A^+\to\C$ is called  H\"older continuous if it is H\"older continuous with respect to one,
equivalently all, metrics $d_\b$. Then $\exists \b>0$ s.t $g$ is Lipschitz continuous with respect to
$d_\b$. The corresponding Lipschitz constant is $L_\b(g)$.
\edfn

\fr The proofs of the following three results can be found for instance in 
\cite{gdms} and \cite{MU_Israel}.  Regarding the result of Theorem \ref{gibbs1s}, it was shown in \cite{Sa1} that if the shift is topologically mixing, then finite irreducibility is also necessary. For Theorem \ref{t1_2015_11_17} see \cite{Sa}.

\bthm\label{gibbs1s}
For every H\"older continuous summable potential $\psi:E_A^+\to\R$ 
there exists a unique Gibbs state $\mu_\psi$ on $E_A^+$. The measure
$\mu_\psi$ is ergodic.
\ethm

\bthm\label{t1_2015_11_17}
Suppose $\psi:E_A^+\to\R$ is a H\"older continuous 
potential. Then, denoting by $\P_F(\psi)$ the topological pressure of $\psi|_{F_A^+}$ with respect to the shift map $\sg:F_A^+\to F_A^+$, we have 
$
\P(\psi)=\sup\{\P_F(\psi)\},
$
where the supremum is taken over all finite subsets $F$ of $E$; equivalently over all finite subsets $F$ of $E$ such that the matrix $A|_{F\times F}$ is irreducible.
\ethm

\bthm[Variational Principle for One-Sided Shifts]\label{t1eqs}
Suppose that $\psi:E_A^+\to\R$ is a H\"older continuous summable 
potential. Then
$$
\sup\lt\{\hmu(\sg)+\int_{E_A^+}\psi d\mu, \ \mu\circ\sg^{-1} = \mu 
   \  \text{and}    \int\psi d\mu>-\infty\rt\}
=\P(\psi)
={\hmu}_\psi(\sg)+\int_{E_A^+}\psi d\mu_\psi,
$$
and $\mu_\psi$ is the only measure at which this supremum is attained.
\ethm

\fr Any measure that realizes the supremum in the above Variational
Principle is called an equilibrium state for $\psi$. Then
Theorem~\ref{t1eqs} can be reformulated as follows.

\bthm\label{t3111105p70} 
If $\psi:E_A^+\to\R$ is a H\"older continuous summable potential, then the 
Gibbs state $\mu_\psi$ is a unique equilibrium state for $\psi$. 
\ethm

\fr Also due to the irreducibility of the incidence matrix $A$, we have:

\bprop\label{p1_2015_11_14}
A H\"older continuous $\psi:E_A^+\to\R$ is summable if and only if $\P(\psi)<\infty$. 
\eprop

\section{Two-Sided Thermodynamic Formalism}\label{2sidedshift}

\fr As in the previous section, let $E$ be a countable set and let 
$A:E\times E\to\{0,1\}$ be a
finitely irreducible matrix. Given $\b>0$ we define the metric $d_\b$
on $E^\Z$ by:
$$
d_\b\((\om_n)_{-\infty}^{\infty},(\tau_n)_{-\infty}^{\infty}\)
=\exp\(-\b\max\{n\ge 0:\forall_{k\in\Z}|k|\le n\, \imp\, \om_k=\tau_k\}\)
$$
with the standard convention that $e^{-\infty}=0$. Note that all the
metrics $d_\b$, $\b>0$, on $E^\Z$ are H\"older continuously equivalent
and they induce the product topology on $E^\Z$. We set
$$
E_A=\{(\om_n)_{-\infty}^{\infty}:\forall_{n\in\Z}\ A_{\om_n\om_{n+1}}=1\}.
$$
Obviously $E_A$ is a closed subset of $E^Z$ and we endow it with
the topology and metrics $d_\b$ inherited from $E^\Z$. The two-sided
shift map $\sg:E^\Z\to E^Z$ is defined as 
$
\sg\((\om_n)_{-\infty}^{\infty}\)=\((\om_{n+1})_{-\infty}^{\infty}\).
$
Clearly $\sg(E_A)=E_A$ and $\sg:E_A\to E_A$ is a
homeomorphism.

\bdfn\label{1121905B}
A function $g:E_A\to\C$ is said to be H\"older continuous
provided that it is H\"older continuous with respect to one,
equivalently all, metrics $d_\b$. Then there exists at least one (in
fact an open segment)
parameter $\b>0$ such that $g$ is Lipschitz continuous with respect to
$d_\b$. The corresponding Lipschitz constant is denoted by $L_\b(g)$.
\edfn

\fr For every $\om\in E_A$ and all $-\infty\le m\le n\le\infty$, we
set
$
\om|_m^n=\om_m\om_{m+1}\ld\om_n.
$

\fr Let $E_A^*$ be the set of all $A$-admissible finite words. For
$\tau\in E^*$, $\tau=\tau_m\tau_{m+1}\ld\tau_n$, we set
$$
[\tau]_m^n=\{\om\in E_A:\om|_m^n=\tau\}
$$
and call $[\tau]_m^n$ the cylinder generated by $\tau$ of size from
$m$ to $n$. The family of all cylinders of size from $m$ to $n$ will be
denoted by $C_m^n$. 
If $m=0$ we simply write $[\tau]$ for $[\tau]_m^n$. 

\sp\fr  Let $\psi:E_A\to\R$ be a continuous function. The topological pressure
$\P(\psi)$ is defined  by
\beq\label{1_2015_11_04}
\P(\psi)
:=\lim_{n\to\infty}\frac1n\log\sum_{\om\in C_0^{n-1}}\exp\(\sup\(S_n\psi|_{[\om]}\)\),
\eeq
and the limit exists due to the same subadditivity argument. Similarly as in Theorem \ref{2.1.5}, we immediately obtain:

\bthm\lab{2.1.5+-}
If $\psi:E_A\to\R$ is a continuous function and $\mu$ is a $\sg$-invariant Borel 
probability measure on $E_A$ such that $\int\psi\, d\mu>-\infty$, then 
$$
\h_\mu(\sg)+\int_{E_A}\psi\, d\mu\le \P(\psi).
$$
\ethm

\sp A shift-invariant
Borel probability measure $\mu$ on $E_A$ is called a \textit{Gibbs state}
of $\psi$ if there are a constant $C\ge 1$ and $P\in\R$
such that:
\beq\label{1082305}
C^{-1}
\le {\mu([\om|_0^{n-1}])\over \exp(S_n\psi(\om)-Pn)}
\le C
\eeq
for all $n\ge 1$ and all $\om\in E_A$.  It clearly follows from (\ref{1082305}) that if $\psi$ admits a Gibbs
state, then $P=\P(\psi)$. Two functions $\psi_1$ and $\psi_2$ are 
called \textit{cohomologous} in a class $G$ of real-valued functions defined on
$E_A$ if and only if there exists $u\in G$ such that
$$
\psi_2-\psi_1=u-u\circ\sg.
$$
Any function of the form $u-u\circ\sg$ is called a \textit{coboundary} in $G$.
A function $\psi:E_A\to\R$ is called cohomologous to a constant,
say $b\in\R$ provided that $\psi-b$ is a coboundary. Notice that any
two functions on $E_A$, cohomologous in $C(E_A)$, the class
of all real-valued bounded functions on $E_A$, have the same 
topological pressure and the same set of Gibbs measures. 

\sp A function
$\psi:E_A\to\R$ is called \textit{past-independent} if for
every $\tau\in C_0^{\infty}$ and for all $\om,\rho\in[\tau]$, we have
$\phi(\om)=\phi(\tau)$. 

To apply the previous Section, we need the following:

\blem\label{l1082305}
Every H\"older continuous function $\psi:E_A\to\R$ is cohomologous
to a past-independent H\"older continuous function $\psi^+:E_A\to\R$ 
in the class $\H_B$ of all bounded H\"older continuous functions.
\elem

{\sl Proof.} The proof is essentially the same as in \cite{Bowen_Anosov}, 
Lemma~1.6, page 11. For every letter $e\in E$,  fix some infinite word $\ov e\in 
E_A(-\infty,-1)$ such that $A_{\ov e_{-1}e}=1$. Then for 
$\om\in E_A$ fix as before an infinite word $\ov \omega_0\in E_A(-\infty, -1)$, and put $\ov\om=\ov{\om_0}\om|_0^{\infty}$. Hence $\ov \om$ is the same as $\om$ starting from index 0, but has fixed elements with negative indices. Now set:
$$
u(\om)=\sum_{j=0}^\infty(\psi(\sg^j(\om))-\psi(\sg^j(\ov\om)).
$$
Fix 
$\b>0$ so that $\psi$ is Lipschitz continuous w.r.t  $d_\b$. For $j\ge 0$, $[\sg^j(\om)|_{-j}^{\infty}]
=[\sg^j(\ov\om)|_{-j}^{\infty}]$, so $d_\b(\sg^j(\om),
\sg^j(\ov\om))\le e^{-\b j}$, and thus
\beq\label{2082305}
|\psi(\sg^j(\om))-\psi(\sg^j(\ov\om))|\le L_\b(\psi)e^{-\b j}.
\eeq
$u$ is well-defined 
and continuous. If $d_\b(\om,\tau)=e^{-\b n}$ then 
$[\om|_{-n}^n]=[\tau|_{-n}^n]$, and for $0\le j\le n$,
$$
|\psi(\sg^j(\om))-\psi(\sg^j(\tau))|
\le L_\b(\psi)d_\b(\sg^j(\om),\sg^j(\tau))
\le L_\b(\psi)e^{-\b(n-j)}
$$
and
$$
|\psi(\sg^j(\ov\tau))-\psi(\sg^j(\ov\om))|
\le L_\b(\psi)d_\b(\sg^j(\ov\tau),\sg^j(\ov\om))
\le L_\b(\psi)e^{-\b(n-j)}
$$
Thus using also (\ref{2082305}), we get
$$
|u(\om)-u(\tau)|
\le 2L_\b(\psi)\sum_{j=0}^{E(n/2)}e^{-\b(n-j)} +
     2L_\b(\psi)\sum_{j>E(n/2)}e^{-\b j} \le 4L_\b(\psi)(1-e^{-\b})^{-1}e^{-\b{n\over 2}}.
$$
So $u:E_A\to\R$ is Lipschitz continuous with 
respect to the metric $d_{\b/2}$, and by (\ref{2082305}) it is bounded. 
So $u\in\H_{\b/2}$. Hence $\psi^+=\psi-u+u\circ\sg$ is  Lipschitz 
continuous with respect to  $d_{\b/2}$. It follows that
$\psi^+$ is past-independent. 
\endpf

\sp In the setting of the above lemma, let $\ov\psi^+$ be the 
factorization of $\psi^+$ on $E_A^+$, i.e. 
$$
\psi^+=\ov\psi^+\circ\pi_0,
$$
where $\pi_0: E_A \to E_A^+, \ \pi_0(\omega) = (\omega_0 \omega_1 \ldots), \om \in E_A$. As a consequence of this lemma we get,

\blem\label{l3_2015_11_17}
If $\psi:E_A\to\R$ is a H\"older continuous potential, then
$
\P(\psi)=\P(\ov\psi^+),
$ \
where, we remind, the former pressure is taken with respect to the two-sided shift $\sg:E_A\to E_A$ while the latter one is taken with respect to the one-sided shift $\sg:E_A^+\to E_A^+$.
\elem

Then from this lemma and Theorem~\ref{t1_2015_11_17}, we get 

\bthm\label{t2_2015_11_17}
Suppose that $\psi:E_A\to\R$ is a H\"older continuous 
potential. Then, denoting by $\P_F(\psi)$ the topological pressure of $\psi|_{F_A}$ with respect to $\sg:F_A\to F_A$, we have that
$$
\P(\psi)=\sup\{\P_F(\psi)\},
$$
where the supremum is taken over all finite subsets $F$ of $E$; equivalently over all finite subsets $F$ of $E$ such that the matrix $A|_{F\times F}$ is irreducible.
\ethm

We call the function $\psi:E_A\to\R$ is \textit{summable} if and
only if
$$
\sum_{e\in E}\exp\(\sup\(\psi|_{[e]}\)\)<\infty.
$$
As in the case of one-sided shift, we have the following.

\bprop\label{p2_2015_11_14}
A H\"older continuous $\psi:E_A\to\R$ is summable if and only if $\P(\psi)<\infty$. 
\eprop

\fr From Lemma~\ref{l1082305} (the coboundary
appearing there is bounded), we get the following.

\blem\label{l1082405}
Every H\"older continuous summable function $\psi:E_A\to\R$ is cohomologous
to a past-independent H\"older continuous summable function $\psi^+:E_A\to\R$ 
in the class $\H_B$ of all bounded H\"older continuous functions.
\elem

\bthm\label{t1111105p68}
For every H\"older continuous summable potential $\psi:E_A\to\R$ 
there exists a unique Gibbs state $\mu_\psi$ on $E_A$. The measure
$\mu_\psi$ is ergodic.
\ethm
{\sl Proof.} Let $\psi^+$ be the past-independent H\"older continuous 
summable potential ascribed to $\psi$ in Lemma~\ref{l1082405}.
It 
follows from Theorem~\ref{gibbs1s} that there exists a unique 
measure $\mu_\psi^+$ on $E_A^+$ for which 
 (\ref{1082305}) is satisfied. Also $\mu_\psi^+$ is
ergodic. Since $\mu_\psi^+$ is invariant, the formula
$$
\mu_\psi([\om|_m^n])
=\mu_\psi^+(\sg^m([\om|_m^n]))
=\mu_\psi^+([\eta]_0^{n-m}), 
$$
with $\eta_0 = \om_m, \ldots, \eta_{n-m} = \om_n$, for $|\om| = n-m+1$, gives rise to a shift-invariant measure $\mu_\psi$ on 
$E_A$, for which (\ref{1082305}) holds. Thus $\mu_\psi$ 
is a Gibbs state for $\psi$, and  is 
ergodic. Passing to the uniqueness,
if $\mu$ is a Gibbs state for $\psi$, then from its invariance and (\ref{1082305}), for all 
$n\ge 0$ and all $\om\in E_A$, we have 
$$
C^{-1}
\le {\mu([\om|_{-n}^n])\over \exp\(S_{2n+1}\psi(\sg^{-n}(\om))-\P(\psi)n\)}
\le C.
$$
Any Gibbs states of $\psi$ are equivalent and, as one
of them is ergodic, uniqueness follows.  

\endpf

\bthm[Variational Principle for Two-Sided Shifts]\label{t2111105p69}
Suppose that $\psi:E_A\to\R$ is a H\"older continuous summable 
potential. Then
$$
\sup\lt\{\hmu(\sg)+\int_{E_A}\!\!\!\psi d\mu:\mu\circ\sg^{-1}=\mu 
   \  \text{ and }  \  \int\!\!\psi d\mu>-\infty\rt\}
=\P(\psi)
={\hmu}_\psi(\sg)+\int_{E_A}\!\!\psi d\mu_\psi,
$$
and $\mu_\psi$ is the only measure at which this supremum is taken on.
\ethm
{\sl Proof.} We replace $\psi$ by the past-independent H\"older continuous summable potential
$\psi^+$ resulting from Lemma~\ref{l1082405}. Since
the dynamical system $(\sg,E_A)$, is canonically isomorphic to the  natural extension of $(\sg,E_A^+)$, the map $\mu\mapsto
\mu\circ\pi_0^{-1}$ gives a bijection between the space $M_\sg(E_A)$ of $\sigma$-invariant probabilities on $E_A$,  and the space $M_\sg(E_A^+)$ of $\sigma$-invariant probabilities on $E_A^+$,
which preserves entropies. Since $\P(\psi)=\P(\ov\psi^+)$ by Lemma~\ref{l3_2015_11_17}, and since for every 
$\mu\in M_\sg(E_A)$, \ 
$
\int_{E_A^+}\ov\psi^+d\mu\circ \pi_0^{-1}
=\int_{E_A}\ov\psi^+\circ\pi_0 d\mu
=\int_{E_A}\psi^+d\mu.
$
We are done, due to Theorem~\ref{t1eqs}.
\endpf

\sp\fr Any measure that realizes the supremum value in the above Variational Principle is called an \textit{equilibrium state} for $\psi$. Then Theorem~\ref{t2111105p69} can be reformulated as follows.

\bthm\label{t3111105p70A} 
If $\psi:E_A\to\R$ is a H\"older continuous summable potential, then the 
Gibbs state $\mu_\psi$ is a unique equilibrium state for $\psi$. 
\ethm

 We will need however more characterizations of Gibbs states. Let the partition
$$
\Pa_-=\{[\om|_0^{\infty}]:\om\in E_A\}
     =\{[\om]:\om\in E_A^+\}.
$$
 $\Pa_-$ is a measurable partition of $E_A$ and two
elements $\a,\b\in E_A$ belong to the same element of this 
partition if and only if $\a|_0^{\infty}=\b|_0^{\infty}$. If $\mu$
is a Borel probability measure on $E_A$, we let 
$$
\{\ov\mu^\tau:\tau\in E_A\}
$$ 
be a \textit{canonical system of conditional measures} induced by
partition $\Pa_-$ and measure $\mu$ (see Rokhlin \cite{Ro}). Each $\ov\mu^\tau$ is a Borel probability
measure on $[\tau|_0^{\infty}]$ and we will frequently write $\ov\mu^\om$, $\om\in E_A^+$, to denote the corresponding 
conditional measure on $[\om]$. Denote by 
$$\pi_0: E_A \to E_A^+, \ \pi_0(\tau) = \tau|_0^\infty, \tau \in E_A,$$
the canonical projection to $E_A^+$.
The system $\{\ov\mu^\om:\om\in E_A^+\}$  is determined by the fact that:
$$
\int_{E_A}g\,d\mu=\int_{E_A^+}\int_{[\om]}g\,d\ov\mu^{\om}
  \,d(\mu\circ \pi_0^{-1})(\om)
$$
for every measurable function $g\in L^1(\mu)$ (\cite{Ro}). It is evident from this characterization that if we change such a system on a set of zero $\mu\circ \pi_0^{-1}$-measure, then we also obtain a system of conditional measures. The canonical system of conditional measures induced by $\mu$ is uniquely defined up to a set of zero $\mu\circ \pi_0^{-1}$-measure. We say that a collection
$$
\{\ov\mu^\om:\om\in E_A^+\}
$$
defines a \textit{global system of conditional measures} of $\mu$ if this is indeed a system of conditional measures of $\mu$ and a measure $\ov\mu^\om$ is defined for every $\om\in E_A^+$, rather than only on a set of full $\mu\circ \pi_0^{-1}$-measure. The first characterization of Gibbs states is the following.

\bthm\label{t4111105p70}
Suppose that $\psi:E_A\to\R$ is a H\"older continuous summable 
potential. Let $\mu$ be a Borel probability shift-invariant measure on
$E_A$. Then $\mu=\mu_\psi$, the unique Gibbs state for $\psi$ if
and only if there exists $D\ge 1$ such that 
\beq\label{1111105}
D^{-1}
\le {\ov\mu^\om([\tau\om|_{-n}^{\infty}])\over 
    \exp\(S_n\psi(\rho)-\P(\psi)n\)}
\le D
\eeq
for every $n\ge 1$, $\mu\circ \pi_0^{-1}$-a.e. $\om\in E_A^+$, $\ov\mu^\om$-a.e. $\tau\om\in E_A(-n,\infty)$ with
$A_{\tau_{-1}\om_0}=1$, and $\rho\in
[\tau\om|_0^{\infty}]$. Also there exists a global system of conditional measures of $\mu_\psi$ s.t,
\beq\label{1111105B}
D^{-1}
\le {\ov\mu_\psi^\om([\tau\om|_{-n}^{\infty}])\over 
    \exp\(S_n\psi(\rho)-\P(\psi)n\)}
\le D
\eeq
for every $\om\in E_A^+$,  $n\ge 1$,  $\tau\in E_A(-n,-1)$ with
$A_{\tau_{-1}\om_0}=1$, and every $\rho\in[\tau\om|_0^{\infty}]$. 
\ethm
{\sl Proof.} Suppose (\ref{1111105}) holds. For every $\om\in
E_A$ (note that here indeed "for every'', although \eqref{1111105} is assumed to hold only for $\mu\circ \pi_0^{-1}$-a.e. $\om\in E_A^+$) and every $n\ge 1$, if $\om|_0^{n-1}|_{-n}^{-1}$ denotes the finite word $\eta_{-n}\ldots \eta_{-1}$ with $\eta_{-n} = \om_0, \ldots, \eta_{-1} = \om_{n-1}$, then we obtain:
\beq\label{2111105}
\aligned
\mu([\om|_0^{n-1}])
&=\mu(\sg^n([\om|_0^{n-1}]))
 =\mu([\om|_0^{n-1}|_{-n}^{-1}])
 =\int_{E_A^+}\ov\mu^\tau\([\om|_0^{n-1}|_{-n}^{-1}\tau]\) \ d\mu\circ\pi_0^{-1}(\tau)\\
&=\int_{E_A^+:A_{\om_{n-1}\tau_0}=1}\ov\mu^\tau\([\om|_0^{n-1}|_{-n}^{-1}\tau]\) \ 
       d\mu\circ\pi_0^{-1}(\tau) \\
       &\comp\exp\(S_n\psi(\om)-\P(\psi)n\)\sum_{e\in E:A_{\om_{n-1}e}=1}\mu([e])
\endaligned
\eeq
Consequently,
\beq\label{3111105}
\mu([\om|_0^{n-1}])\lek\exp\(S_n\psi(\om)-\P(\psi)n\).
\eeq
In order to prove the opposite inequality, notice that because of finite
irreducibility of the matrix $A$ there exists a finite set $F\sbt E$ such
that for every $a\in E$ there exists $b\in F$ such that $A_{ab}=1$. Since
$\mu$ is a non-zero measure, there exists $c\in E$ such that $\mu([c])>0$. 
Invoking finite irreducibility of the matrix $A$ again, we see that for every
$e\in E$ there exists a finite word $\a$ such that $e\a c$ is $A$-admissible.
Put $k=|e\a|$. It then follows from (\ref{2111105}) that
$$
\mu([e])
\ge \mu([e\a])
\gek \exp\(S_k\psi(\rho)-\P(\psi)k\)\mu([c])>0
$$
for every $\rho\in [e\a]$. Hence $T=\min\{\mu([\e]):e\in F\}>0$. Continuing
(\ref{2111105}), we see that
$
\mu([\om|_0^{n-1}])\gek T\exp\(S_n\psi(\om)-\P(\psi)n\).
$
Combining this with (\ref{3111105}) we see that $\mu$ is a Gibbs state for $\psi$, and the first assertion of the theorem is established.

\sp Now,  to complete the proof, we need to define a global system of conditional measures of $\mu_\psi$ such that \eqref{1111105B} holds
for every $\om\in E_A^+$,  $n\ge 1$, $\tau\in E_A(-n,-1)$ with
$A_{\tau_{-1}\om_0}=1$, and every $\rho\in\sg^{-n}([\tau\om|_{-n}^{\infty}])=
[\tau\om|_0^{\infty}]$. Indeed, let 
$
L:\ell_\infty\to\ell_\infty
$
be a Banach limit. Note that:
\beq\label{3_2015_11_14}
\begin{aligned}
{\mu_\psi\([\tau\om|_{-n}^{k-1}]\)\over \mu_\psi\(\om|_0^{k-1}]\)}
 &={\mu_\psi\([\tau\om|_0^{n+k-1}]\)\over \mu_\psi\(\om|_0^{k-1}]\)}
 \comp{\exp\(S_{n+k}\psi(\rho)-\P(\psi)(n+k)\)\over
   \exp\(S_k\psi(\sg^n(\rho)-\P(\psi)k \)} 
   =e^{S_n\psi(\rho)-\P(\psi)n} \\
   &\comp\mu_\psi([\tau]_0^{n-1}), 
 \end{aligned}
\eeq
belongs to $\ell_\infty$ (comparability constants from Gibbs property of $\mu_\psi$). So the sequence
$$
\lt({\mu_\psi\([\tau\om|_{-n}^{k-1}]\)\over \mu_\psi\(\om|_0^{k-1}]\)}\rt)_{k=1}^\infty
$$
belongs to $\ell_\infty$. We can then define
$$
\ov\mu_\psi^\om\([\tau\om|_{-n}^{\infty}]\):=L\lt(\lt({\mu_\psi\([\tau\om|_{-n}^{k-1}]\)\over \mu_\psi\(\om|_0^{k-1}]\)}\rt)_{k=1}^\infty\rt).
$$
For every $g:[\om]\to\R$, and a linear combination 
$\sum_{j=1}^sa_j\1_{[\tau^{(j)}\om|_{-n_j}^{\infty}]}$, the sequence
\beq\label{2_2015_11_14}
{\mu_\psi\lt(\sum_{j=1}^sa_j\1_{[\tau^{(j)}\om|_{-n_j}^{k-1}]}\rt)
  \over \mu_\psi\(\om|_0^{k-1}]\)}
\comp\mu_\psi\lt(\sum_{j=1}^sa_j\1_{[\tau^{(j)}]_0^{n_j-1}}\rt),
\eeq
with the same comparability constants as above, belongs to $\ell_\infty$. We can then define
$$
\ov\mu_\psi^\om\lt(\sum_{j=1}^sa_j\1_{[\tau^{(j)}\om|_{-n_j}^{\infty}]}\rt)
:=L\lt(\lt({\mu_\psi\lt(\sum_{j=1}^sa_j\1_{[\tau^{(j)}\om|_{-n_j}^{k-1}]}\rt)
  \over \mu_\psi\(\om|_0^{k-1}]\)}\rt)_{k=1}^\infty\rt).
$$
So, we have defined a function $\ov\mu_\psi^\om$ from the vector space $\cV$ of all linear combinations as above the the set of real numbers. Since the Banach limit is a positive linear operator, so is the function $\ov\mu_\psi^\om:\cV\to\R$. Furthermore, because of monotonicity of Banach limits, and because of \eqref{2_2015_11_14}, $\ov\mu_\psi^\om(g_n)\downto 0$ whenever $(g_n)_{n=1}^\infty$ is a monotone decreasing sequence of functions in $\cV$ converging pointwise to $0$. Therefore, Daniell-Stone Theorem gives a unique Borel probability measure on $[\om]$, whose restriction to $\cV$ coincides with $\ov\mu_\psi^\om$. We keep the same symbol $\ov\mu_\psi^\om$ for this extension. Now, it follows from Martingale Convergence Theorem that, for $\mu_\psi\circ \pi_0^{-1}$-a.e. $\om\in E_A^+$ and every $\tau\in E_A(-n,-1)$ with
$A_{\tau_{-1}\om_0}=1$ the limit 
$$
\lim_{k\to\infty}{\mu_\psi\([\tau\om|_{-n}^{k-1}]\)\over\mu_\psi\(\om|_0^{k-1}]\)}
$$
exists and equals the conditional measure of $\mu_\psi$ on $[\om]$. By properties of Banach limits,
$$
{\mu_\psi\([\tau\om|_{-n}^{k-1}]\)\over \mu_\psi\(\om|_0^{k-1}]\)}=\mathop{\lim}\limits_{k\to\infty}{\mu_\psi\([\tau\om|_{-n}^{k-1}]\)\over\mu_\psi\(\om|_0^{k-1}]\)},
$$
and thus the collection
$
\{\ov\mu_\psi^\om:\om\in E_A^+\}
$
is indeed a global system of conditional measures of $\mu_\psi$. Using also \eqref{3_2015_11_14} this completes the proof.
\endpf

\sp Similarly, let
$$
\Pa_+=\{[\om|_{-\infty}^{-1}]:\om\in E_A\},
$$
and given a Borel probability measure $\mu$ on $E_A$, let 
$\{\mu^{+\om}:\om\in E_A\}$ be the corresponding canonical system
of conditional measures. As in Theorem~\ref{t4111105p70}, we  
prove the following.

\bthm\label{t3111705p148}
Suppose $\psi:E_A\to\R$ is a H\"older continuous summable 
potential. Let $\mu$ be a Borel probability shift-invariant measure on
$E_A$. Then $\mu$ is equal to the unique Gibbs state $\mu_\psi$ of $\psi$, if
and only if there exists $D\ge 1$ s.t for all $\om\in E_A(-\infty,-1)$,  $n\ge 1$, $\tau\in 
E_A(0,n-1)$ with $A_{\om_{-1}\tau_0}=1$, and 
$\rho\in [\om\tau|_{-\infty}^{n-1}]$, the conditional measures $\mu^{+\om}$ satisfy, 
\beq\label{1111105E}
D^{-1}
\le {\mu^{+\om}([\om\tau|_{-\infty}^{n-1}])\over 
    \exp\(S_n\psi(\rho)-\P(\psi)n\)}
\le D
\eeq

\ethm

\

\section{Skew product Smale Spaces of Countable Type}\label{con}

\fr We keep the notation from the previous two sections.  
\begin{dfn}\label{Smaleskew}
Let $(Y,d)$
be a complete bounded metric space, and take for every $\om
\in E_A^+$ an arbitrary set $Y_\om\sbt Y$ and a continuous 
injective map $T_\om:Y_\om\to Y_{\sg(\om)}$. 
Define
$$
\hat Y:=\bu_{\om\in E_A^+}\{\om\}\times Y_\om\sbt E_A^+\times Y.
$$
Define the map $T:\hat Y\to\hat Y$ by \
$
T(\om,y)=(\sg(\om),T_\om(y)). \
$
The pair $(\hat Y,T:\hat Y\to\hat Y)$ is called a \textit{skew product 
Smale endomorphism} if there exists $\l>1$ such 
that  $T$ is fiberwise uniformly contracting, i.e for all $\om\in E_A^+$ and all $y_1,y_2\in Y_\om$, 
\beq\label{1111205}
d(T_\om(y_2),T_\om(y_1))\le \l^{-1}d(y_2,y_1)
\eeq

\end{dfn}

Note that for every $\tau\in E_A(-n,\infty)$ the composition
$
T_\tau^n
=T_{\tau|_{-1}^{\infty}}\circ T_{\tau|_{-2}^{\infty}}\circ\ld
   \circ T_{\tau|_{-n}^{\infty}}:Y_\tau\to Y_{\tau|_0^{\infty}}
$
is well-defined. Therefore for every $\tau\in E_A$ we can define the map
$$
T_\tau^n
:=T_{\tau|_{-n}^{\infty}}^n
:=T_{\tau|_{-1}^{\infty}}\circ T_{\tau|_{-2}^{\infty}}\circ\ld
  \circ T_{\tau|_{-n}^{\infty}}:Y_{\tau|_{-n}^{\infty}}\longrightarrow Y_{\tau|_0^{\infty}}
$$
Then the sequence $\(T_\tau^n\(Y_{\tau|_{-n}^{\infty}}\)\)_{n=0}
^\infty$ consists of descending sets, and
\beq\label{3111205}
\diam\(T_\tau^n\(Y_{\tau|_{-n}^{\infty}}\)\)\le \l^{-n}\diam(Y).
\eeq
The same is then true for the closures of these sets, i.e. we have that
the sequence $\(\ov{T_\tau^n\(Y_{\tau|_{-n}^{\infty}}\)}\)_{n=0}
^\infty$ consists of closed descending sets, and
$
\diam\(\ov{T_\tau^n\(Y_{\tau|_{-n}^{\infty}}\)}\)\le \l^{-n}\diam(Y).
$
Since the metric space $(Y,d)$ is complete, we conclude that 
its intersection
$$
\bi_{n=1}^\infty \ov{T_\tau^n\(Y_{\tau|_{-n}^{\infty}}\)}
$$
is a singleton. Denote its only element by $\hat\pi_2(\tau)$. So, we have defined the map
$$
\hat\pi_2:E_A\to Y,
$$
and next define the map $\hat\pi:E_A\to E_A^+\times Y$ by the formula
\beq\label{5111705p141}
\hat\pi(\tau)=(\tau|_0^{\infty},\hat\pi_2(\tau)),
\eeq
and the truncation to the elements of non-negative indices by $$\pi_0: E_A \to E_A^+, \  \pi_0(\tau) = \tau|_0^\infty$$
In the notation for $\pi_0$ we drop the hat symbol, as this projection is in fact independent of the skew product on $\hat Y$.
For all $\om\in E_A^+$ define the $\hat \pi_2$-projection of the cylinder $[\om]\subset E_A$, 
$$
J_\om:=\hat\pi_2([\om])\in Y,
$$
and call these sets the \textit{stable Smale fibers} of the system $T$. The global invariant set is:
$$
J:=\hat\pi(E_A)=\bu_{\om\in E_A^+}\{\om\}\times J_\om\sbt E_A^+\times Y,
$$
called the \textit{Smale space} (or the \textit{fibered limit set}) induced by the Smale pre-system $T$. 

For each $\tau\in E_A$ we have  $\hat\pi_2(\tau)\in \ov Y_{\tau|_0^{\infty}}$; so
$
J_\om\sbt \ov Y_\om
$,
for every $\om\in E_A^+$. Since all  $T_\om:Y_\om\to Y_{\sg(\om)}$ are Lipschitz continuous with Lipschitz constant $\l^{-1}$, they extend uniquely to Lipschitz continuous maps  from $\ov Y_\om$ to $\ov Y_{\sg(\om)}$ with  Lipschitz constant $\l^{-1}$. 

\bprop\label{pt1.1}
For every $\om\in E_A^+$ we have that
\beq\label{1t1.1}
T_\om(J_\om)\sbt J_{\sg(\om)},
\eeq
\beq\label{2t1.1}
\bu_{e\in E,  A_{e\om_0}=1}
T_{e\om}(J_{e\om})=J_\om, \ \text{and}
\eeq
\beq\label{3t1.1}
T\circ\hat\pi=\hat\pi\circ\sg
\eeq

\eprop

\bpf
Let $y\in J_\om$;  then $\exists \tau\in E_A(-\infty,-1)$ s.t $A_{\tau_{-1}\om_0}=1$ and $y=\hat\pi_2(\tau\om)$. Then
\beq\label{1_2016_04_01}
\begin{aligned}
\{T_\om(y)\}
&=T_\om\(\bi_{n=1}^{\infty}\ov{T_{\tau\om}^n
(Y_{\tau|_{-n}^{-1}\om})}\) 
\sbt\bi_{n=1}^{\infty}T_\om\(\ov{T_{\tau\om}^n
(Y_{\tau|_{-n}^{-1}\om})}\) 
\sbt\bi_{n=1}^{\infty}\ov{T_\om\(T_{\tau\om}^n
(Y_{\tau|_{-n}^{-1}\om})\)} \\
&=\bi_{n=1}^{\infty}\ov{T_{\tau\om}^{n+1}
(Y_{\tau|_{-n}^{-1}\om})\)}
=\bi_{n=1}^{\infty}\ov{T_{\tau|_{-\infty}^{-1}\om_0(\sg(\om))}^{n+1}
(Y_{\tau|_{-\infty}^{-1}\om_0(\sg(\om))}\)}  \\
&=\hat\pi_2\(\tau|_{-\infty}^{-1}\om_0(\sg(\om))\) \sbt J_{\sg(\om)}
\end{aligned}
\eeq
Thus $T_\om(J_\om)\sbt J_{\sg(\om)}$ meaning that \eqref{1t1.1} holds, and, as  $\{T_\om(y)\}$ and $\big\{\hat\pi_2\(\tau|_{-\infty}^{-1}\om_0(\sg(\om))\)\big\}$, the respective sides of \eqref{1_2016_04_01}, are singletons, we therefore get
\beq\label{4t1.1}
T_\om\hat\pi_2(\tau\om)=\hat\pi_2\circ\sg (\tau\om),
\eeq
meaning that \eqref{3t1.1} holds. The inclusion
$
\bu_{e\in E\atop A_{e\om_0}=1}
T_{e\om}(J_{e\om})\sbt J_\om
$
holds because of \eqref{1t1.1}. In order to prove the opposite one, let $z\in J_\om$. Then $z=\hat\pi(\g\om)$ with some $\g\in E_A(-\infty,-1)$, where $A_{\g_{-1}\om_0}=1$. Formula \eqref{4t1.1} then yields
$$
z
=\hat\pi_2\circ\sg\(\g|_{-\infty}^{-2}\g_{-1}|_{-\infty}^0\om\)
=T_{\g_{-1}\om}\circ\hat\pi_2\(\g|_{-\infty}^{-2}\g_{-1}|_{-\infty}^0\om\)
\in T_{\g_{-1}\om}\(J_{\g_{-1}\om}\).
$$
So,
$
J_\om\sbt \bu_{e\in E\atop A_{e\om_0}=1}
T_{e\om}(J_{e\om}).
$

\epf

\sp\fr Similarly we obtain
$
J_\om=\bu_{\tau\in E_A^n\atop A_{\tau_n\om_0}=1}T_{\tau\om}\(J_{\tau\om}\)
$,
for all $\om\in E_A^+$, and $n >0$. By formula \eqref{1t1.1} we have $T(J)\sbt J$, so consider the system
$$
T:J\to J
$$
which we call the \textit{skew product Smale endomorphism} generated by the Smale system $T:\hat Y\to \hat Y$. By \eqref{2t1.1} it follows

\bobs\label{o3t1.2}
The map $T:J\to J$ is surjective.
\eobs

\bobs\label{o1t1}
If $T:\hat Y\to\hat Y$ is a skew product Smale system, then the following statements are equivalent:

\sp
\begin{itemize}
\item [(a)] For every $\xi\in J$, the fiber $\hat\pi^{-1}(\xi)\sbt E_A$ is compact.

\sp\item [(b)]  For every $y\in Y$, the fiber $\hat\pi_2^{-1}(y)\sbt E_A$ is compact.

\sp\item [(c)] For every $\xi=(\om,y)\in J$, the set
$
\big\{e\in E:A_{e\om_0}=1 \  \and \  \ y\in T_{e\om}(J_{e\om})\big\}
$
is finite.
\end{itemize}
\eobs

\fr If any of these conditions is satisfied, we call the Smale system $T:J\to J$ of \textit{compact type}.

\brem\label{r1t1.2}
In item (a) of Observation~\ref{o1t1} one can replace $J$ by $\hat Y$.
\erem

\bobs\label{o2t1.2}
If for every $y\in Y$ the set
$
\big\{e\in E:A_{e\om_0}=1 \  \and \  \ y\in T_{e\om}(J_{e\om})\big\}
$
is finite for every $\om\in E_A^+$, then  $T:J\to J$ is of compact type.
\eobs

From now on we assume $T:\hat Y\to \hat Y$ is a skew product Smale system of compact type.

If for every $\xi\in\hat Y$ (or in $J$), the fiber $\hat\pi^{-1}(\xi)\sbt E_A$ is finite, we call the skew product Smale system $T$ of \textit{finite type}.

\bobs\label{o2t1}
If the skew product Smale system $T:\hat Y\to \hat Y$ is of finite type, then it is also of compact type. 
\eobs

\fr The Smale system $T:\hat Y\to \hat Y$ is called of \textit{bijective type} if, for every $\xi\in J$ the fiber $\hat\pi^{-1}(\xi)$ is a singleton. Equivalently, the map $\hat\pi:E_A\to J$ is injective; then also $T:J\to J$ is bijective. A Smale skew product of bijective type is clearly of finite type, and thus of compact type. 

\bdfn\label{d1t1.3}
We call a Smale endomorphism continuous if the global map $T:J\to J$ is continuous with respect to the relative topology inherited from $E_A^+\times Y$.
\edfn

\blem\label{l1_2015_11_13}
For every $n\ge 1$ and every $\tau\in E_A(-n,\infty)$, we have that
$$
\hat\pi_2\([\tau]\)=T_\tau^n\(J_\tau\), \ \text{and 
equivalently for every} \  \tau\in E_A, \ 
\hat\pi_2\([\tau|_{-n}^{\infty}]\)=T_\tau^n\(J_{\tau|_{-n}^{\infty}}\)
$$
\elem

\bpf
From \eqref{3t1.1} we get
$
T_\tau^n\(J_{\tau|_{-n}^{\infty}}\)
=T_\tau^n\circ\hat\pi_2\([\tau|_{-n}^{\infty}]|_0^{\infty}\)
=\hat\pi_2\circ\sg^n\([\tau|_{-n}^{\infty}]|_0^{\infty}\)
=\hat\pi_2\([\tau|_{-n}^{\infty}]\).
$
\epf

 As a consequence of  (\ref{3111205}), we get the following

\bobs\label{o1_2015_11_07}
For every $\om\in E_A$, the map 
$
[\om]_0^{\infty}\ni\tau\mapsto \hat\pi_2(\tau)\in J_{\om|_0^\infty}\sbt Y
$
is Lipschitz continuous if $E_A$ is endowed with the metric $d_{\l^{-1}}$. 
\eobs

Note that for every 
$\tau\in E_A^n$, $n\ge 1$, we have \ 
$
\hat\pi([\tau])=\bu_{\om\in[\tau]}\{\om\}\times J_\om.
$

\

Let $M(E_A)$ be the topological space of  Borel probability measures on $E_A$  with the topology of weak convergence, and  $M_\sg(E_A)$ be its closed subspace consisting of $\sg$-invariant measures. Likewise, let $M(J)$ be the space of  Borel probability measures on $J$ with the topology of weak convergence, and let $M_T(J)$ be its closed subspace of $T$-invariant measures. The following fact is well known and easy to prove.

\blem\label{l1t5}
Let $W$ and $Z$ be Polish spaces. Let $\mu$ be a Borel probability measure on $Z$, let $\hat\mu$ be its completion, and denote by $\hat\cB_\mu$ the complete $\sg$-algebra of all $\hat\mu$-measurable subsets of $Z$. Let $f:W\to Z$ be a Borel measurable surjection and let $g:W\to\ov\R$ be a Borel measurable function. Define the functions $g_*, g^*:Z\to\ov\R$ respectively by
$$
g_*(z):=\inf\big\{g(w):w\in f^{-1}(z)\big\} \  \
\and \ \
g_*(z):=\sup\big\{g(w):w\in f^{-1}(z)\big\}.
$$
Then these two functions are measurable with respect to the $\sg$-algebra $\hat\cB_\mu$. If in addition the map $f:W\to Z$ is locally $1$--to--$1$, then both $g_*$ and $g^*:Z\to\ov\R$ are Borel measurable.
\elem

%\bpf
%Replacing $g$ by $-g$ suffices to prove our lemma for the function $g^*:Z\to\ov\R$ only. Fix $t\in\R$. Then for any $z\in Z$ we have that $g^*(z)\in (t,\infty)$ if and only if $g(w)\in (t,\infty)$ for some $w\in f^{-1}(z)$. Thus $(g^*)^{-1}((t,\infty))=f(g^{-1}((t,\infty)))$. Hence $(g^*)^{-1}((t,\infty))$ is an analytic set since $g^{-1}((t,\infty))$ is a Borel set, $f:W\to Z$ is a Borel map, and both spaces $W$ and $Z$ are Polish. The first assertion now follows from the fact that all analytic subsets of $Z$ belong to $\hat\cB_\mu$. If in addition the map $f:W\to Z$ is locally $1$-to-$1$, then the $f$-images of all Borel subsets of $W$ are Borel in $Z$, so $f(g^{-1}((t,\infty)))\sbt Z$ is Borel.\epf

Now we prove the following.

\bthm\label{t1t6}
If $T:J\to J$ is a continuous skew product Smales endomorphism of compact type, then the map
$
M_\sg(E_A)\ni\mu\longmapsto
\mu\circ\hat\pi^{-1}\in M_T(J)
$
is surjective.
\ethm

\bpf
Fix $\mu\in M_T(J)$. Let $\cB_b(E_A)$ and $\cB_b(J)$ be the vector spaces of all bounded Borel measurable real-valued functions defined respectively on $E_A$ and on $J$. Let Let $\cB_b^+(E_A)$ and $\cB_b^+(J)$ be the respective convex cones consisting of non-negative functions. Let 
$$
\hat\cB_b(E_A)
:=\{g\circ\hat\pi:g\in \cB_b(J)\}.
$$
Clearly $\hat\cB_b(E_A)$ is a vector subspace of $\cB_b(E_A)$ and, as $
\hat\pi:E_A\to J$ is a surjection, for each $h\in\hat\cB_b(E_A)$ there exists a unique $g\in \cB_b(J)$ such that $h=g\circ\hat\pi$. Thus, treating, via integration, $\mu$ as a linear functional from $\cB_b(J)$ to $\R$, the formula
$$
\hat\cB_b(E_A)\ni g\circ\hat\pi \longmapsto \hat\mu(g\circ\hat\pi)
:=\mu(g)\in\R,
$$
defines a positive linear functional from $\hat\cB_b(E_A)$ to $\R$. By Lemma~\ref{l1t5} applied to $\hat\pi:E_A\to\R$, for every $h\in \cB_b(E_A)$, the function $h_*\circ\hat\pi:E_A\to\R$ belongs to $\hat\cB_b(E_A)$. Also $h-h_*\circ\hat\pi\ge 0$, thus $h-h_*\circ\hat\pi\in\cB_b^+(E_A)$. Riesz Theorem produces then a positive linear functional $\mu^*:\cB_b(E_A)\to\R$, s.t
$
\mu^*(h)=\hat\mu(h),
$
for every $h\in\hat\cB_b(E_A)$. But $\mu^*$ restricted to the space $C_b(E_A)$ of  bounded continuous functions on 
$E_A$, remains linear and positive. 

\sp{\bf Claim $1^0$:} If $(g_n)_{n=1}^\infty$ is a monotone decreasing sequence of non-negative functions in $C_b(E_A)$ converging pointwise in $E_A$ to the function identically equal to zero, then $\lim_{n\to\infty}\mu^*(g_n)$ exists and is equal to zero.

\bpf
Clearly, $(g_n^*)_{n=1}^\infty$ is a monotone decreasing sequence of non-negative bounded functions that, by Lemma~\ref{l1t5}, belong to $\cB(J)$, thus to $\cB_b^+(J)$. Fix $y\in J$. Since our map $T:J\to J$ is of compact type, the set $\hat\pi^{-1}(y)\sbt E_A$ is compact. Therefore Dini's Theorem applies to let us conclude that the sequence $\(g_n|_{\hat\pi^{-1}(y)}\)_{n=1}^\infty$ converges uniformly to zero. Since all these functions are non-negative, this just means that the sequence $(g_n^*)_{n=1}^\infty$ converges to zero. In conclusion $(g_n^*)_{n=1}^\infty$ is a monotone decreasing sequence of functions in $\cB_b^+(J)$ converging pointwise to zero. Therefore, as also $g_n\le g_n^*\circ\hat\pi$, we get
$$
0
\le \varlimsup_{n\to\infty}\mu^*(g_n)
\le \varlimsup_{n\to\infty}\mu^*(g_n^*\circ\hat\pi)
=\varlimsup_{n\to\infty}\hat\mu(g_n^*\circ\hat\pi)
=\varlimsup_{n\to\infty}\mu(g_n^*)
=0.
$$
So,  $\lim_{n\to\infty}\mu^*(g_n)$ exists and is equal to zero. 
\epf

\fr Having Claim~$1^0$, Daniell-Stone Representation Theorem implies that $\mu^*$ extends uniquely from $C_b(E_A)$ to an element of $M(E_A)$. We denote it also by $\mu^*$. 

\sp{\bf Claim $2^0$:} For every $\ep>0$ there exists $K_\ep$, a compact subset of $
E_A$ such that $\hat\pi^{-1}(\hat\pi(K_\ep))=K_\ep$ and $\mu(\hat\pi(K_\ep))\ge 1-\frac{\ep}2$.
\bpf
Fix $k\in\Z$ and let $p_k:E^{+-}\to E$  the projection on the $k$th coordinate, i.e. 
$
p_k\((\g_n)_{n=-\infty}^{\infty}\)=\g_k.
$ \ 
Fix $\ep>0$. We assume without loss of generality that $E=\{1, 2,\ld\}$. Since  $T:J\to J$ is of compact type, each set $\hat\pi^{-1}(y)\sbt E_A$, $y\in J$, is compact, and thus the function $p_k^*:J\to \ov\R$, defined in Lemma~\ref{l1t5}, takes values in $\R$ and  is Borel measurable. Thus  $p_k^*\circ\hat\pi:
E_A\to\N$ is Borel measurable, and there exists $n_k\ge 1$ so that
\bea\label{2t8}
\mu\((p_k^*)^{-1}([n_k+1,\infty))\)<2^{-|k|-4}\ep.
\eea
Since $\mu$ is inner regular, by Lusin's Theorem, Borel measurability of the function $p_k^*:J\to\N$
yields the existence of closed subsets $J_k\sbt J$ such that 
$$
\mu(J_k)\ge 1-\ep2^{-|k|-4}
\  \  \  {\rm and}  \  \  \  
p_k^*|_{J_k}:J_k\to\N
\  \  {\rm is \ continuous.}
$$
Define
$$
J_\infty:=\bi_{k\in\Z}J_k.
$$
Then $J_\infty$ is a closed subset of $J$, and 
\beq
\label{1t8}
\mu(J_\infty)\ge 1-\frac{\ep}4,
\eeq
and each map $p_k^*|_{J_\infty}: J_\infty\to\N$ is continuous. Define also
$$
K_\ep:=\bi_{k\in\Z}\(p_k^*|_{J_\infty}\circ \hat\pi|_{\hat\pi^{-1}(J_\infty)}\)^{-1}([1,n_k])
$$
By the definition of the maps $p_k^*$ we have that
\beq\label{3t9}
\hat\pi^{-1}(\hat\pi(K_\ep))=K_\ep, \ \text{and} \ \ 
\hat\pi(K_\ep)=J_\infty\cap\bi_{k\in\Z}(p_k^*)^{-1}([1,n_k])
\eeq
Therefore, using \eqref{1t8} and \eqref{2t8}, we get
\beq\label{4t9}
\mu(J\sms\hat\pi(K_\ep))
\le \mu(J\sms J_\infty)+\sum_{k\in\Z}
\mu\((p_k^*)^{-1}([n_k+1,\infty))\)
\le \frac{\ep}4+\frac{\ep}4
=\frac{\ep}2
\eeq
Since $p_k^*|_{J_\infty}, k\in\Z$ are continuous, $K_\ep$ is  closed in $E_A$. Also $K_\ep\sbt \prod_{k\in\Z}[1,n_k]$ and this product is compact, so $K_\ep$ is  compact. Using \eqref{3t9} and \eqref{4t9}, completes the proof.
\epf
Using that $\mu$ is $T$-invariant, and Urysohn's Approximation Method, we show,

\sp{\bf Claim $3^0$:} If $\ep>0$ and $K_\e\sbt E_A$ is the compact set produced in Claim $2^0$, then
$
\mu^*\circ\sg^{-j}(K_\ep)\ge 1-\ep,
$
for all integers $j\ge 0$.
\bpf
Fix $\ep>0$ arbitrary. Fix an integer $j\ge 0$. Since measure $\mu^*\circ\sg^{-j}\circ\hat\pi^{-1}$ is outer regular and $\hat\pi(K_\ep)$ is a Borel (since compact) set, there exists an open set $U\sbt J$ such that
$$
\hat\pi(K_\ep)\sbt U \  \
\and \  \
\mu^*\circ\sg^{-j}\circ\hat\pi^{-1}\(U\sms 
\hat\pi(K_\ep)\)\le \ep/2
$$
Now, Urysohn's Lemma produces a continuous function $u:J\to[0,1]$ such that $u(\hat\pi(K_\ep))=\{1\}$ and $u(E_A\sms U)\sbt \{0\}$. Then, by our construction of $\mu^*$ and by Claim $2^0$, 
$$
\begin{aligned}
\mu^*\circ\sg^{-j}(K_\ep)
&=\mu^*\circ\sg^{-j}\circ \hat\pi^{-1}\(\hat\pi(K_\ep)\)
\ge \mu^*\circ\sg^{-j}\circ\hat\pi^{-1}(U)-
\frac{\ep}2 =\mu^*(\1_U\circ\hat\pi\circ\sg^j)-
\frac{\ep}2\\
&=\mu^*(\1_U\circ T^j)-\frac{\ep}2
\ge \mu^*(u\circ T^j\circ\hat\pi)-
\frac{\ep}2
=\mu(u)-\frac{\ep}2 \ge \mu(\hat\pi(K_\ep))-\frac{\ep}2 \ge 1-\ep
\end{aligned}
$$\epf
Now, for every $n\ge 1$, set
$$
\mu_n^*:=\frac1n\sum_{j=0}^{n-1}
\mu^*\circ\sg^{-j}.
$$
It directly follows from Claim~$3^0$ that
$$
\mu_n^*(K_\ep)\ge 1-\ep,
$$
for every $\ep>0$ and all $n\ge 1$. Also, since, by Claim~$2^0$, each set $K_\ep$ is compact, the sequence of measures $(\mu_n^*)_{n=1}^\infty$ is tight with respect to the weak  topology on 
$M_\sg(E_A)$. There thus exists $(n_k)_{k=1}^\infty$, an increasing sequence of positive integers such that 
$(\mu_{n_k}^*)_{k=1}^\infty$ converges weakly, and denote its limit by $\nu\in M(E_A)$. A standard argument shows that $\nu\in M_\sg(E_A)$. By the definitions of $\hat\mu$ and $\mu^*$, we get
for every $g\in C_b(E_A)$, and every $n\ge 1$, that
$$
\begin{aligned}
\mu_n^*\circ\hat\pi^{-1}(g)
&=\mu_n^*(g\circ\hat\pi)
=\frac1n\sum_{j=0}^{n-1}
\mu^*\circ\sg^{-j}(g\circ\hat\pi)
=\frac1n\sum_{j=0}^{n-1}
\mu^*(g\circ\hat\pi\circ\sg^j) \\
&=\frac1n\sum_{j=0}^{n-1}
\mu^*\((g\circ T^j)\circ\hat\pi\) 
=\frac1n\sum_{j=0}^{n-1}
\hat\mu\((g\circ T^j)\circ\hat\pi\) \\
&=\frac1n\sum_{j=0}^{n-1}\mu(g\circ T^j) =\frac1n\sum_{j=0}^{n-1}\mu(g) =\mu(g)
\end{aligned}
$$
So $\mu_n^*\circ\hat\pi^{-1}=\mu$ for all $n\ge 1$, thus
$
\nu\circ\hat\pi^{-1}
=\mathop{\lim}\limits_{k\to\infty}\mu_{n_k}^*\circ\hat\pi^{-1}
=\mathop{\lim}\limits_{k\to\infty}\mu_{n_k}^*\circ\hat\pi^{-1}
=\mu.
$
\epf

\bobs\label{o1t11}
If $T$ is a Smale endomorphism and $\mu\in M_\sg(E_A)$, then 
$$
\h_{\mu\circ\hat\pi^{-1}}(T)=\hmu(\sg).
$$
\eobs

\bpf
We have two standard inequalities
$
\h_{\mu\circ\hat\pi^{-1}}(T)\le \hmu(\sg)$, and \ 
$\h_{\mu\circ\hat\pi^{-1}\circ \pi_0^{-1}}(\sg)\le\h_{\mu\circ\hat\pi^{-1}}(T)$. 
But $\pi_0: E_A\to E_A^+, \ \pi_0(\tau) = \tau|_0^\infty$ is the canonical projection from $E_A$ to $E_A^+$. So, the measure $\mu\in M_\sg(E_A)$ is the Rokhlin's natural extension of the measure $\mu\circ\hat\pi^{-1}\circ \pi_0^{-1}\in M_\sg(E_A^{+})$. Hence, $\h_{\mu\circ\hat\pi^{-1}\circ \pi_0^{-1}}(\sg)=\hmu(\sg)$. So from the above inequalities,  $\h_{\mu\circ\hat\pi^{-1}}(T)=\hmu(\sg)$.

\epf

Now, we define the topological pressure of continuous real-valued functions on $J$ with respect to the dynamical system $T:J\to J$. Since the space $J$ is \textbf{not compact}, there is no canonical candidate for such definition and we choose the one which will turn out to behave well on the theoretical level (variational principle) and serves well for practical purposes (Bowen's formula). For every finite admissible word $\om\in E_A^{+*}$ let
$$
[\om]_T=:\hat\pi_2([\om]) \sbt J.
$$
If $\psi:J\to\R$ is a continuous function, we define
$$
\P(\psi)=\P_T(\psi)
:=\lim_{n\to\infty}\frac1n\log\sum_{\om\in C^{n-1}}\exp\(\sup(S_n\psi|_{[\om]_T}\),
$$
where 
$
S_n\psi=\sum_{j=0}^{n-1}\psi\circ T^j, \  \  n\ge 1.
$
The limit above exists since the sequence 
$
\N\ni n\longmapsto 
\log\sum_{\om\in C^{n-1}}\exp\(\sup(S_n\psi|_{[\om]_T}\)
$
is subadditive. 
We call $\P_T(\psi)$ the \textit{topological pressure} of the potential $\psi:J\to\R$ with respect to the dynamical system $T:J\to J$. As an immediate consequence of this definition and Definition~\ref{1_2015_11_04}, we get the following.

\bobs\label{o1t12}
If $\psi:J\to\R$ is a continuous function, then
$$
\P_T(\psi)=\P_\sg(\psi\circ\hat\pi).
$$
\eobs

\fr The following theorem follows immediately from Theorem~\ref{t2111105p69}, Observation~\ref{o1t12}, Theorem~\ref{t1t6}, and Observation~\ref{o1t11}, and we will provide such proof.

\bthm\label{t2t12}
If $\psi:J\to\R$ is a continuous function, and $\mu\in M_T(J)$ is such that $\psi \in L^1(J, \mu)$ and $\int\psi\,d\mu>-\infty$, then
$
\hmu(T)+\int_J\psi\,d\mu\le \P_T(\psi).
$
\ethm  

\bpf
By Theorem~\ref{t1t6} there exists $\nu\in M_\sg(E_A)$ such that $\nu\circ\hat\pi^{-1}=\mu$. The other theorems listed immediately above give: \
$$
\hmu(T)+\int_J\psi\,d\mu
=\h_{\nu\circ\hat\pi^{-1}}(T)+\int_J\psi\,d(\nu\circ\hat\pi^{-1})
=\h_\nu(\sg)+\int_{E_A}\psi\circ\hat\pi\,d\hat\nu
\le \P_\sg(\psi\circ\hat\pi)
=\P_T(\psi).
$$
\epf
We have the following two definitions.
\bdfn\label{d3t13}
The measure $\mu\in M_T(J)$ is called an equilibrium state of the continuous potential $\psi:\hat Y\to\R$, if $\int\psi\,d\mu>-\infty$ and 
$$
\hmu(T)+\int_J\psi\,d\mu= \P_T(\psi).
$$
\edfn

\bdfn\label{d3t13a}
The potential $\psi:J\to\R$ is called \textit{summable} if 
$$
\sum_{e\in E}\exp\(\sup(\psi|_{[e]_T})\)<\infty.
$$
\edfn

\bobs\label{o1t13.1}
$\psi:J\to\R$ is summable if and only if $\psi\circ\hat\pi:E_A\to\R$ is summable.
\eobs

\bdfn\label{d1t13}
We call a  continuous skew product Smale endomorphism $T:\hat Y\to\hat Y$ H\"older, if the projection $\hat\pi:E_A\to J$ is H\"older continuous. 
\edfn

\fr We  now  establish an important property of H\"older  skew product Smale endomorphisms of compact type, and then will describe a general construction of such endomorphisms.

\bthm\label{t2t13}
If $T:J\to J$ is H\"older skew product Smale endomorphism of compact type and $\psi:J\to\R$ is a H\"older summable potential, then $\psi$ admits a unique equilibrium state, denoted by $\mu_\psi$. In addition
$
\mu_\psi=\mu_{\psi\circ\hat\pi}\circ\hat\pi^{-1},
$
where $\mu_{\psi\circ\hat\pi}$ is the unique equilibrium state of  $\psi\circ\hat\pi:E_A\to\R$ with respect to  $\sg:E_A\to E_A$.
\ethm

\bpf
 $\psi\circ\hat\pi:E_A\to\R$ is a summable H\"older continuous potential, so it has a unique equilibrium state $\mu_{\psi\circ\hat\pi}$ by Theorem~\ref{t3111105p70}. By Observation~\ref{o1t12} and Observation~\ref{o1t1} we have 
$$
\h_T\(\mu_{\psi\circ\hat\pi}\circ\hat\pi^{-1}\)+
     \int_J\psi\,d\(\mu_{\psi\circ\hat\pi}\circ\hat\pi^{-1}\)
=\h_\sg(\mu_{\psi\circ\hat\pi})+
     \int_{E_A}\psi\circ\hat\pi\,d(\mu_{\psi\circ\hat\pi})
=\P_\sg(\psi\circ\hat\pi)
=\P_T(\psi)
$$
We have to show that, if $\mu$ is an equilibrium measure of  $\psi$, then $\mu=\mu_{\psi\circ\hat\pi}\circ\hat\pi^{-1}$. In this case, from Theorem~\ref{t1t6}, we get $\mu=\nu\circ\hat\pi^{-1}$ for some $\nu\in M_\sg(E_A)$. Then by  Observation~\ref{o1t12},
$$
\h_\nu(\sg)+\int_{E_A}\psi\circ\hat\pi\,d\nu
\ge \h_{\nu\circ\hat\pi^{-1}}(T)+\int_J\psi\,d\(\nu\circ\hat\pi^{-1}\)
=\hmu(T)+\int_J\psi\,d\mu
=\P_T(\psi)
=\P_\sg(\psi\circ\hat\pi)
$$
Hence, $\nu$ is an equilibrium state of  $\psi\circ\hat\pi:E_A\to\R$ and  $\nu=\mu_{\psi\circ\hat\pi}$ (see Theorem~\ref{t3111105p70}). 

\epf

Now we provide the promised construction of H\"older Smale skew product endomorphisms. Start with $(Y,d)$, a complete bounded metric space, and assume given for every $\om\in E_A^+$ a continuous closed injective map
$
T_\om:Y\to Y
$,
satisfying the following conditions 
\beq\label{1t14}
d(T_\om(y_2),T_\om(y_1))\le \l^{-1}d(y_2,y_1),
\eeq
for all $y_1, y_2\in Y$ and some $\l>1$ independent of $\om$,
\beq\label{2t14}
d_\infty(T_\b,T_\a)
:=\sup\big\{d(T_\b(\xi), T_\a(\xi)):\xi\in Y\big\}
\le Cd_\ka(\b,\a)
\eeq
with some constants $C\in (0,\infty)$, $\ka>0$, and all $\a,\b\in E_A^+$. \ \ Then
$$
\hat Y= E_A^+\times Y,
$$
and call $T:\hat Y\to \hat Y$ a skew product Smale system of \textit{global character}. We may assume without loss of generality that
\beq\label{1t15}
\ka\le \frac12\log\l.
\eeq

\bthm\label{t1t15}
Each skew product Smale system of global character is H\"older.
\ethm

\bpf
Let $T:E_A^+\times Y\to E_A^+\times Y$ be such skew product Smale system. We first show that $T:E_A^+\times Y\to E_A^+\times Y$ is continuous. Enough to show that $p_2\circ T: E_A^+\times Y \to Y$ is continuous, with $p_2$ the projection to second coordinate. For all $\a,\b\in E_\om^+$ and $z, w\in Y$, 
$$
\begin{aligned}
d(p_2\circ T(\a,z),p_2\circ T(\b,w))
&= d(T_\a(z), T_\b(w))
 \le d(T_\a(z), T_\b(z))+d(T_\b(z), T_\b(w)) \\ 
 &\le d_\infty(T_\a,T_\b)+\l^{-1}d(z,w) \le Cd_\ka(\a,\b)+\l^{-1}d(z,w),
\end{aligned}
$$
and continuity of the map $p_2\circ T:E_A^+\times Y\to Y$ is proved. So the continuity of $T:E_A^+\times Y\to E_A^+\times Y$ is proved, and thus $T:J\to J$ is continuous too. We now show that $T:J\to J$ is H\"older. So, fix an integer $n\ge 0$, two words $\a,\b\in E_A$ and $\xi\in Y$. We then have
\beq\label{2t15}
\begin{aligned}
  d\(T_\a^{n+1}(\xi)&,T_\b^{n+1}(\xi)\) 
  =d\lt(T_\a^n\(T_{\a|_{-(n+1)}^{\infty}}(\xi)\),T_\b^n\(T_{\b|_{-(n+1)}^{\infty}}(\xi)\)\rt) \\
&\le d\lt(T_\a^n\(T_{\a|_{-(n+1)}^{\infty}}(\xi)\),T_\a^n\(T_{\b|_{-(n+1)}^{\infty}} 
      (\xi)\)\rt)
+d\lt(T_\a^n\(T_{\b|_{-(n+1)}^{\infty}}(\xi)\),T_\b^n\(T_{\b|_{-(n+1)}^{\infty}}(\xi)\)\rt) \\
&\le \l^{-n}d\(T_{\a|_{-(n+1)}^{\infty}}(\xi)\),T_{\b|_{-(n+1)}^{\infty}}(\xi)\)
     +d_\infty\(T_\a^n,T_\b^n\)\\
     &\le \l^{-n}Cd_\ka\(\a|_{-(n+1)}^{\infty},\b|_{-(n+1)}^{\infty}\)
    +d_\infty\(T_\a^n,T_\b^n\)
\end{aligned}
\eeq
Let $p\ge -1$ be uniquely determined by the property that 
\beq\label{3t15}
d_\ka(\a,\b)=e^{-\ka p}.
\eeq
Consider two cases. First assume that
$
d_\ka(\a,\b)\ge e^{-\ka n}.
$
Then using also \eqref{1t15}, we get
\beq\label{5t15}
\l^{-n}d_\ka\(\a|_{-(n+1)}^{\infty},\b|_{-(n+1)}^{\infty}\)
\le e^{-2\ka n}
\le e^{-\ka n}d_\ka(\a,\b)
\eeq
So, assume that
\beq\label{3t15B}
d_\ka(\a,\b)< e^{-\ka n}
\eeq
Then $n<p$, so $n+1\le p$, whence 
$$
\begin{aligned}
d_\ka\(\a|_{-(n+1)}^{\infty},\b|_{-(n+1)}^{\infty}\)
&=e^{-\ka(n+2)}e^{-\ka p} =e^{-\ka(n+2)}d_\ka(\a,\b) \le e^{-\ka n}d_\ka(\a,\b)
\end{aligned}
$$
Hence,
$$
\l^{-n}d_\ka\(\a|_{-(n+1)}^{\infty},\b|_{-(n+1)}^{\infty}\)
\le e^{-\ka n}d_\ka(\a,\b)
$$
Inserting this and \eqref{5t15} to \eqref{2t15} in either case yields
$$
d\(T_\a^{n+1}(\xi),T_\b^{n+1}(\xi)\)
\le d_\infty\(T_\a^n,T_\b^n\)+Ce^{-\ka n}d_\ka(\a,\b)
$$
Taking supremum over all $\xi\in Y$, we get
$
d_\infty\(T_\a^{n+1},T_\b^{n+1}\)
\le d_\infty\(T_\a^n,T_\b^n\)+Ce^{-\ka n}d_\ka(\a,\b).
$
Thus, by induction 
\beq\label{1t18}
d_\infty\(T_\a^n,T_\b^n\)
\le Cd_\ka(\a,\b)\sum_{j=0}^{n-1}e^{-\ka n}
\le Cd_\ka(\a,\b)\sum_{j=0}^\infty e^{-\ka n}
=C(1-e^{-\ka})^{-1}d_\ka(\a,\b)
\eeq
for all $\a,\b\in E_A$ and all integers $n\ge 0$. Recall that the integer $p\ge -1$ is determined by \eqref{3t15}. Assume that $p\ge 0$. Then using \eqref{1t18}, \eqref{3t15B}, and \eqref{3111205}, we get
$$
\begin{aligned}
d(\hat\pi_2(\a),(\hat\pi_2(\a))
&\le \diam\(T_\a^p(Y)\)+\diam\(T_\b^p(Y)\)+d_\infty\(T_\a^p,T_\b^p\) \\
&\le \l^{-p}\diam(Y)+\l^{-p}\diam(Y)+\frac{C}{1-e^{-\ka}}d_\ka(\a,\b) \\
&\le 2\diam(Y)d_\ka^{\frac{\log\l}{\ka}}(\a,\b)+\frac{C}{1-e^{-\ka}}d_\ka(\a,\b)
\end{aligned}
$$
Thus $\hat\pi_2:E_A\to Y$ is H\"older continuous, so
$\hat\pi:E_A\to Y$ is H\"older continuous. 

\epf

\section{Conformal Skew Product Smale Endomorphisms}\label{CSPSE}
In this section we keep the setting of skew product Smale endomorphisms. However we assume more about the spaces $Y_\om$, $\om\in E_A^+$, and the fiber maps $T_\om:Y_\om\to Y_{\sg(\om)}$, namely:

\sp\begin{itemize}
\item[(a)] $Y_\om$ is a closed bounded subset of $\R^d$, with some $d\ge 1$ such that $\ov{\Int(Y_\om)}=Y_\om$. 

\sp\item[(b)] Each map $T_\om:Y_\om\to Y_{\sg(\om)}$ extends to a $C^1$ conformal embedding from $Y_\om^*$ to $Y_{\sg(\om)}^*$, where $Y_\om^*$ is a bounded connected open subset of $\R^d$ containing $Y_\om$. The same symbol $T_\om$ denotes this extension and we assume that  $T_\om:Y_\om^*\to Y_{\sg(\om)}^*$ satisfy:

\sp\item[(c)] Formula \eqref{1111205}  holds for all $y_1, y_2\in Y_\om^*$, perhaps with some smaller constant $\l>1$.

\sp\item[(d)] (Bounded Distortion Property 1) There exist constants $\a>0$ and $H>0$ such that for all $y, z\in Y_\om^*$ we have that: \ 
$$
\big|\log|T_\om'(y)|-\log|T_\om'(z)|\big|\le H||y-z||^\a.
$$
\item[(e)] The function 
$
E_A\ni\tau\longmapsto\log|T_\tau'(\hat\pi_2(\om))|\in\R
$
is H\"older continuous.

\sp \item[(f)] (Open Set Condition) For every $\om\in E_A^+$ and for all $a, b\in E$ with $A_{a\om_0}=A_{b\om_0}=1$ and $a\ne b$, we have \ 
$$
T_{a\om}(\Int(Y_{a\om}))\cap T_{b\om}(\Int(Y_{b\om}))=\es.
$$
\item[(g)] (Strong Open Set Condition) There exists a measurable function $\d:E_A^+\to(0,\infty)$ such that for every $\om \in E_A^+$, \ 
$$
J_\om\cap\(Y_\om\sms\ov B(Y_\om^c,\d(\om)\)\ne\es.
$$
\end{itemize}

\fr Any skew product Smale endomorphism satisfying conditions (a)--(g) will be called in the sequel a \textit{conformal skew product Smale endomorphism}. 

\

A standard calculation based on (c), (d), and (e), yields the following:

\sp\fr (BDP2) (Bounded Distortion Property 2)  For some constant $H$, we have that
$$
\Big|\log\big|\(T_\tau^n\)'(y)\big|-
\log\big|\(T_\tau^n\)'(z)\big|\Big|
\le H||y-z||^\a.
$$
for all $\tau\in E_A,  \ y,z\in Y_{\tau|_{-n}^{\infty}}^*$, and all $n >0$.

\sp\fr An immediate consequence of (BDP2) is the following version.

\sp\fr (BDP3) (Bounded Distortion Property 3) For all $\tau\in E_A$, all $n\ge 0$, and all $y,z\in Y_{\tau|_{-n}^{\infty}}^*$, if $K:=\exp\(H\diam^\a(Y)\)$, then we have that
$$
K^{-1}
\le \frac{\big|\(T_\tau^n\)'(y)\big|}
{\big|\(T_\tau^n\)'(z)\big|}
\le K
$$

\brem\label{r1ct1}
Bounded Distortion Property 1, (d), is always satisfied if $d\ge 2$. If $d=2$, this is due of Koebe Distortion Theorem since each conformal map in $\C$ is either holomorphic or antiholomorphic. If $d\ge 3$ this follows from Liouville Representation Theorem saying that conformal maps in $\R^d$, $d\ge 3$, are either  M\"obius transformations or similarities.
\erem

\

Recall also that we say that a conformal skew product Smale endomorphism is \textit{H\"older}, if the condition of H\"older continuity for $\hat\pi: E_A \to J$ is satisfied, see Definition \ref{d1t13}.

%\brem\label{r1ct1.1}
%Note that condition (e) is satisfied for instance if $T:\hat Y\to\hat Y$ is of global character (then by Theorem~\ref{t1t15}, it is H\"older) and if in addition
%\beq\label{1ct1.1}
%||T_\a'-T_\b'||_\infty\le Cd_\ka(\a,\b)
%\eeq
%for all $\a,\b\in E_A^+$. Actually if the conformal endomorphism $T:\hat Y\to \hat Y$ is of global character, then \eqref{1ct1.1} also automatically follows in all dimensions $d\ge 2$. For $d=2$ this is just Cauchy's Formula for holomorphic functions, and for $d\ge 3$ it would follow from the Liouville's Representation Theorem, although in this case the proof is not straightforward.
%\erem

As an immediate consequence of the Open Set Condition (f) we get the following.

\blem\label{l1ct3}
Let $T:\hat Y\to \hat Y$ a conformal skew product Smale endomorphism. If $n\ge 1$, $\a, \b\in E_A(-n,\infty)$, $\a|_0^{\infty}=\b|_0^{\infty}$, and $\a|_{-n}^{-1}\ne \b|_{-n}^{-1}$, then
$$
T_\a^n(\Int(Y_\a))\cap T_\b^n(\Int(Y_\b)) =\es.
$$
In fact we have more: 
$$
T_\a^n(\Int(Y_\a))\cap T_\b^n(Y_\b) 
=\es=
T_\a^n(Y_\a)\cap T_\b^n(\Int(Y_\b)).
$$
\elem

\blem\label{l2ct3}
Let $T:\hat Y\to \hat Y$ be a conformal skew product Smale endomorphism. If $n\ge 1$ and $\tau\in E_A(-n,\infty)$, then
$$
\hat\pi_2^{-1}\(T_\tau^n(\Int(Y_\tau))\)
\sbt[\tau].
$$
\elem

\bpf
Let $\g\in \hat\pi_2^{-1} \(T_\tau^n(\Int(Y_\tau))\)$, hence $\g|_0^{\infty}=\tau|_0^{\infty}$ and $\hat\pi_2(\g)\in T_\tau^n(\Int(Y_\tau))\sbt Y_{\tau|_0^{\infty}}$. Also, $\hat\pi_2(\g)\in T_{\g|_{-n}^{\infty}}^n \(Y_{\g|_{-n}^{\infty}}\)$. From  Lemma~\ref{l1ct3} it follows that $\g|_{-n}^0=\tau$, so $\g\in[\tau]$.
\epf
 We will also use the following:

\sp (h) \ (Uniform Geometry Condition) \ $\exists(R>0)\,\, \forall(\om\in E_A^+)\,\, \exists(\xi_\om\in Y_\om)$
$$
B(\xi_\om,R)\sbt Y_\om.
$$

\fr The primary significance of Uniform Geometry Condition (h) lies in:

\blem\label{l1ct3A}
If $T:\hat Y\to\hat Y$ is a H\"older conformal skew product Smale endomorphism satisfying Uniform Geometry Condition (h), then for every  $\ga\ge 1$, $\exists$ $\Ga_\ga>0$ such that:

\sp If $\cF\sbt E_A^*(-\infty,-1)$ is a collection of mutually incomparable (finite)  words, so that $A_{\tau_{-1}\om_0}=1$ for some $\om\in E_A^*$ and all $\tau\in\cF$, and so that for some $\xi\in Y_\om$,
$$
T_{\tau\om}^{|\tau|}(Y_{\tau\om})\cap B(\xi,r)\ne\es  
\  \  with  \  
\ga^{-1}r \le \diam\(T_{\tau\om}^{|\tau|}(Y_{\tau\om})\)\le \ga r,
$$
 then the cardinality of $\cF$ is bounded above by $\Ga_\ga$. 
\elem
\bpf
The family $\{T_{\tau\om}^{|\tau|}(\Int(Y_{\om\tau})):\tau\in\cF\}$ consists of mutually disjoint sets in $Y_\om$. 
We get
$$
\begin{aligned}
T_{\tau\om}^{|\tau|}(\Int(Y_{\tau\om}))
&\spt T_{\tau\om}^{|\tau|}(B(\xi_{\tau\om}, R))
 \spt B\(T_{\tau\om}^{|\tau|}(\xi_{\tau\om}, K^{-1}R\big|\(T_{\tau\om}^{|\tau|}\)'(\xi_{\tau\om})\big|\) \spt B\(T_{\tau\om}(\xi_{\tau\om}), K^{-2 }R\ga^{-1}r\),
\end{aligned}
$$
from the Uniform Geometry condition.
Also $T_{\tau\om}^{|\tau|}(\Int(Y_{\om\tau}))\sbt B(\xi, (1+\ga)r)$. 

\epf

\section{Volume Lemmas}\label{VL}

We keep the setting of Section~\ref{CSPSE}, with $T:\hat Y\to \hat Y$ a conformal skew product Smale endomorphism, i.e. satisfying conditions (a)--(g) of Section~\ref{CSPSE}. 

\bdfn\label{EDim}
We say in general that  a measure $\mu$ is exact dimensional on a space $X$, if its \textit{pointwise dimension} at $x$, defined by the formula $$d_\mu(x) = \mathop{\lim}\limits_{r\to 0} \frac{\log \mu(B(x, r))}{\log r},$$ exists for $\mu$-a.e $x\in X$, and $d_\mu(\cdot)$ is constant $\mu$-almost everywhere (see \cite{Pe}).
\edfn

 If $\mu$ is a Borel probability $\sg$-invariant measure on $E_A$, then by $\chi_\mu(\sg)$ we denote its \textit{Lyapunov exponent}, defined by the formula,
$$
\chi_\mu(\sg)
:=-\int_{E_A}\log\Big|T_{\tau|_0^{\infty}}'\(\hat\pi_2(\tau)\)\Big|\,d\mu(\tau)
 =-\int_{E_A^+}\int_{[\om]}\log\big|T_\om'\(\hat\pi_2(\tau)\)\big|\,d\ov\mu^\om
   (\tau)\,dm(\om),
$$
where $m=\mu\circ \pi_0^{-1}= \pi_{1*}\mu$ is the canonical projection of $\mu$ onto $E_A^+$. 

\

We prove now the exact dimensionality of projections of conditional measures for equilibrium states, onto the fibers: 

\bthm\label{t1vl4}(Exact dimensionality of conditional measures in fibers).

Let $T:\hat Y\to\hat Y$ be a H\"older conformal skew product Smale endomorphism, and let $\psi:E_A\to\R$ be a H\"older continuous summable potential. Then for the projection $\hat\pi_{2*} \ov\mu_\psi^\om = \ov\mu_\psi^\om\circ\hat\pi_2^{-1}$, of the conditional measure to the fiber $J_\om$, we have 
$$
\HD\(\ov\mu_\psi^\om\circ\hat\pi_2^{-1}\)
=\frac{\h_{\mu_\psi}(\sg)}{\chi_{\mu_\psi}(\sg)}
=\frac{\P_\sg(\psi)-\int\psi\,d\mu_\psi}{\chi_{\mu_\psi}(\sg)}
$$
for $m_\psi$-a.e $\om\in E_A^+$, where $m_\psi=\mu_\psi\circ \pi_0^{-1}$. Moreover for $m_\psi$-a.e $\om\in E_A^+$ the measure $\ov\mu_\psi^\om\circ\hat\pi_2^{-1}$ is  exact dimensional, and its pointwise dimension is given by:
\beq\label{1vl4}
\lim_{r\to 0}
\frac
{\log\ov\mu_\psi^\om\circ\hat\pi_2^{-1}
(B,r))}{\log r}
=\frac{\h_{\mu_\psi}(\sg)}{\chi_{\mu_\psi}(\sg)},
\eeq
for $m_\psi$-a.e. $\om\in E_A^+$ and $\ov\mu_\psi^\om\circ\hat\pi_2^{-1}$-a.e. $z\in J_\om$. 
\ethm 

\bpf
We only need to show that \eqref{1vl4} holds. Since $\mu_\psi$ is ergodic, Birkhoff's Ergodic Theorem applied to  $\sg^{-1}:E_A\to E_A$ gives a measurable set $E_{A, \psi}\sbt E_A$ s.t $\mu_\psi(E_{A, \psi})=1$,
\beq\label{2vl4}
\lim_{n\to\infty}\frac1n\log\big|\(T_\tau^n\)'(\hat\pi_2(\sg^{-n}(\tau)))\big|
=-\chi_{\mu_\psi}(\sg), \ \text{and} \  
\lim_{n\to\infty}\frac1n S_n\psi(\sg^{-n}(\tau))
=\int_{E_A}\psi\,d\mu_\psi
\eeq
for every  $\tau\in E_{A, \psi}$. For arbitrary $\om\in E_A^+$ denote now:
$$
\nu_\om:=\ov\mu_\psi^\om\circ\hat\pi_2^{-1},
$$
which is a Borel probability measure on $J_\om$. Fix $\tau\in E_{A, \psi}$. Fix a radius $r\in \(0,\diam\(Y_{p_2(\tau)}\)/2\)$. Let $z=\hat\pi_2(\tau)$, and consider the least integer $n=n(z,r)\ge 0$ so that
\beq\label{1vl6}
T_\tau^n\(Y_{\tau|_{-n}^{\infty}}\)
\sbt B(z,r).
\eeq
If $r>0$ is small enough (depending on $\tau$), then $n\ge 1$ and 
$
T_\tau^{n-1}\(Y_{\tau|_{-(n-1)}^{\infty}}\)
\not\sbt B(z,r).
$
Since $z\in T_\tau^{n-1}\(Y_{\tau|_{-(n-1)}^{\infty}}\)$, this implies that
\beq\label{2vl6}
\diam\lt(T_\tau^{n-1}\(Y_{\tau|_{-(n-1)}^{\infty}}\)\rt)
\ge r.
\eeq
Write $\om:=\tau|_0^{\infty}$. It follows from \eqref{1vl6}, Lemma~\ref{l1_2015_11_13}, and Theorem~\ref{t4111105p70} that
\beq\label{3vl6A}
\begin{aligned}
\nu_\om(B(z,r))
&\ge \nu_\om\(\hat\pi_2\([\tau|_{-n}^{\infty}]\)\)
 =   \ov\mu_\psi^\om\circ\hat\pi_2^{-1}\(\hat\pi_2\([\tau|_{-n}^{\infty}]\)\)
 \ge \ov\mu_\psi^\om\([\tau|_{-n}^{\infty}]\) \\
&\ge D^{-1}\exp\(S_n\psi(\sg^{-n}(\tau))-\P_\sg(\psi)n\).
\end{aligned}
\eeq
By taking logarithms and using \eqref{2vl6}, this gives that
$$
\frac{\log\nu_\om(B(z,r))}{\log r}
\le \frac{-\log D+S_n\psi(\sg^{-n}(\tau))-\P_\sg(\psi)n}
{\log\lt(\diam\lt(T_\tau^{n-1}\(Y_{\tau|_{-(n-1)}^{\infty}}\rt)\rt)}
$$
So applying (BDP3), we get that
$$
\begin{aligned}
\frac{\log\nu_\om(B(z,r))}{\log r}
&\le \frac{-\log D+S_n\psi(\sg^{-n}(\tau))-\P_\sg(\psi)n}
    {\log K+\log\lt(\diam\lt(Y_{\tau|_{-(n-1)}^{\infty}}\rt)\rt)
    +\log \lt|\lt(T_\tau^{n-1}\rt)'(\hat\pi_2\(\sg^{-n}(\tau))\)\rt|} 
\end{aligned}
$$
so by dividing both numerator and denominator by $n$, and using that $\diam\lt(Y_{\tau|_{-(n-1)}^{\infty}}\rt)=\diam(Y)$ and \eqref{2vl4}, this yields,
\beq\label{3vl6}
\varlimsup_{r\to 0}\frac{\log\nu_\om(B(z,r))}{\log r}
\le \frac {\lim_{n\to\infty}\frac1nS_n\psi(\sg^{-n}(\tau))-\P_\sg(\psi)}
          {\lim_{n\to\infty}\frac1n \log\lt|\lt(T_\tau^{n-1}\rt)'(\hat\pi_2\(\sg^{-n} 
           (\tau))\)\rt|}
=\frac{\P_\sg(\psi)-\int\psi\,d\mu_\psi}{\chi_{\mu_\psi}(\sg)}.
\eeq
To prove the opposite inequality, note that the set $\hat\pi_2^{-1}\(J_\om \sms \ov B(Y^c_\om,\d(\om))\)$ is open in $[\om]\sbt E_A$, it is not empty by (g), and thus
$$
\ov\mu_\psi^\om\(\hat\pi_2^{-1}\(J_\om \sms \ov B(Y^c_\om,\d(\om))\)\)>0
$$
for every $\om\in E_A^+$. Consequently, $\mu_\psi(Z)>0$, where
$
Z:=\bu_{\om\in E_A^+}\hat\pi_2^{-1}\(J_\om \sms \ov B(Y^c_\om,\d(\om))\)
$. \ 
Since $\d:E_A^+\to(0,\infty)$ is measurable, there exists $R>0$ s.t
$
\mu_\psi(Z_R)>0,
$
where
$$
Z_R:=\bu_{\om\in E_A^+}\hat\pi_2^{-1}\(J_\om \sms \ov B(Y^c_\om,R)\)
$$
Consider the set
$
N(\tau):=\{k\ge 0:\sg^{-k}(\tau)\in Z_R\}.
$ \
Represent this set $N(\tau)$ as a strictly increasing sequence $(k_n(\tau))_{n=1}^\infty$. By Birkhoff's Ergodic Theorem there is a measurable set $\tilde E_{A, \psi}\sbt E_{A, \psi}$ with $\mu_\psi(\tilde E_{A, \psi})=1$ and
for every $\tau' \in \tilde E_{A, \psi}$, $$\lim_{n \to \infty} \frac{Card\{0 \le i \le n, \ \sigma^{-i}(\tau') \in Z_R\}}{n} = \mu_\psi(Z_R)$$
Now we put $k_n(\tau) \ge n$, instead of $n$ above, and notice that $Card\{0 \le i \le k_n(\tau), \ \sigma^{-i}(\tau') \in Z_R\} = n$. Therefore as $\mu_\psi(Z_R) >0$, we obtain for every $\tau \in \tilde E_{A, \psi}$ and any $n$ large, that:
$$\lim_{n \to \infty}\frac{k_n(\tau)}{n} = \frac{1}{\mu_\psi(Z_R)}$$
Hence for every $\tau \in \tilde E_{A, \psi}$, 
\beq\label{1vl7.1}
\lim_{n\to\infty}\frac{k_{n+1}(\tau)}{k_n(\tau)}=1
\eeq
Fix $\tau\in \tilde E_{A, \psi}$, $\om=\tau|_0^{\infty}$, and let the largest $n=n(\tau,r)\ge 1$ s.t with $k_j:=k_j(\tau)$, $j\ge 1$, 
\beq\label{1vl7}
K^{-1}\lt|\lt(T_\tau^{k_n}\rt)'\(\hat\pi_2(\sg^{-k_n}(\tau))\)\rt|R\ge r.
\eeq
Then 
\beq\label{2vl7}
K^{-1}\lt|\lt(T_\tau^{k_{n+1}}\rt)'\(\hat\pi_2(\sg^{-k_{n+1}}(\tau))\)\rt|R<r.
\eeq
It follows from \eqref{1vl7} and (BDP3) that
$
B(z,r)
\sbt T_\tau^{k_n}\(B(\hat\pi_2(\sg^{-k_n}(\tau)),R)\)
\sbt T_\tau^{k_n}\lt(\Int\lt(Y_{\tau|_{-k_n}^{\infty}}\rt)\rt).
$
Hence, invoking also Lemma~\ref{l2ct3} and Theorem~\ref{t4111105p70}, we infer that 
$$
\nu_\om(B(z,r))
\le \ov\mu_\psi^\om\([\tau|_{-k_n}^{\infty}]\)
\le D\exp\lt(S_{k_n}\psi(\sg^{-k_n}(\tau))-\P_\sg(\psi)k_n\rt).
$$
By taking logarithms and using \eqref{2vl7}, this gives 
$$
\frac{\log\nu_\om(B(z,r))}{\log r}
\ge\frac{\log D+S_{k_n}\psi(\sg^{-k_n}(\tau))-\P_\sg(\psi)k_n}
    {-\log K+\log\lt|\lt(T_\tau^{k_{n+1}}\rt)'(\hat\pi_2\(\sg^{-k_{n+1}}(\tau))\)\rt|}
$$
Dividing both numerator and denominator above by $k_n$, and using  \eqref{2vl4}, \eqref{1vl7.1}, it yields
$$
\varliminf_{r\to 0}\frac{\log\nu_\om(B(z,r))}{\log r}
\ge \frac{\lim_{n\to\infty}\frac1{k_n}S_{k_n}\psi(\sg^{-k_n}(\tau))-\P_\sg(\psi)}
         {\lim_{n\to\infty}\frac1{k_n}
         \log\lt|\lt(T_\tau^{k_{n+1}}\rt)'(\hat\pi_2\(\sg^{-k_{n+1}}(\tau))\)\rt|}
=\frac{\P_\sg(\psi)-\int\psi\,d\mu_\psi}{\chi_{\mu_\psi}(\sg)}.
$$
From \eqref{3vl6}, it follows that \eqref{1vl4} holds for all $\tau\in \tilde E_{A, \psi}$.

\epf

 If $\mu$ is now a Borel probability $T$-invariant measure on the fibered limit set $J$, then by $\chi_\mu(T)$ we again denote its \textit{Lyapunov exponent}, defined by 
$$
\chi_\mu(T)
:=-\int_J\log\big|T_\om'(z)\big|\,d\mu(\om,z)
 =-\int_{E_A^+}\int_{J_\om}\log\big|T_\om'(z)\big|\,d\ov\mu^\om(z)\,dm(\om),
$$
where $m=\mu\circ \pi_0^{-1}$ is the projection of $\mu$ onto $E_A^+$, and $\(\ov\mu^\om\)_{\om\in E_A^+}$ is the canonical system of conditional measures of $\mu$ for the measurable partition $\{\{\om\}\times J_\om\}_{\om\in E_A^+}$. 

\fr Now we prove:

\bcor\label{c1vl10}
Let $T:\hat Y\to\hat Y$ be a H\"older conformal Smale endomorphism of compact type. Let $\psi:J\to\R$ be a H\"older continuous summable potential. Then
$$
\HD\(\ov\mu_\psi^\om\)
=\frac{\h_{\mu_\psi}(T)}{\chi_{\mu_\psi}(T)}
=\frac{\P_T(\psi)-\int\psi\,d\mu_\psi}{\chi_{\mu_\psi}(T)}
$$
for $m_\psi$-a.e. $\om\in E_A^+$, where $m_\psi=\mu_\psi\circ p_1^{-1}$. Moreover, for $m_\psi$-a.e. $\om\in E_A^+$ the measure $\ov\mu_\psi^\om$ is dimensional exact, \ and for $m_\psi$-a.e. $\om\in E_A^+$ and $\ov\mu_\psi^\om$-a.e. $z\in J_\om$,
\beq\label{1vl4A}
\lim_{r\to 0}
\frac
{\log\ov\mu_\psi^\om(B(z, r))}{\log r}
=\frac{\h_{\mu_\psi}(T)}{\chi_{\mu_\psi}(T)}
\eeq
\ecor

\bpf
Let $\hat\psi:=\psi\circ\hat\pi:E_A\to\R$. By Theorem~\ref{t2t13}
$\mu_\psi=\mu_{\hat\psi}\circ\hat\pi^{-1}$ is the unique equilibrium state of the potential
$\psi$ and the shift map $\sg:E_A\to E_A$. By Observation~\ref{o1t12},
 $\P_T(\psi)=\P_\sg(\hat\psi)$, and by Observation~\ref{o1t11}, $\h_{\mu_\psi
 }(T)=\h_{\mu_{\hat\psi}}(\sg)$. Since in addition $\chi_{\mu_\psi}(T)=
 \chi_{\mu_{\hat\psi}}(\sg)$, the proof follows immediately from Theorem~\ref{t1vl4} applied to  $\hat\psi:E_A\to\R$.
\epf

Uniform Geometry Condition (h) was not required in this section; it will be used in the next one.

\section{Bowen's Formula}\label{bowen}

We keep the setting of Sections ~\ref{CSPSE} and Section~\ref{VL}, so $T:\hat Y\to \hat Y$ is a conformal skew product Smale endomorphism, i.e. satisfies conditions (a)--(g) of Section~\ref{CSPSE}. We however emphasize that in Section \ref{bowen}, Condition (h) i.e. the Uniform Geometry Condition, is assumed.

\sp For every $t\ge 0$ let $\psi_t:J\to\R$ be the function
$$
\psi_t(\om,y)=-t\log|T_\om'(y)|.
$$
Define $\cF(T)$ to be the set of parameters $t\ge 0$ for which the potential $\psi_t$ is summable, i.e.
$$
\sum_{e\in E}\exp\(\sup\(\psi_t|_{[e]_T}\)\)<\infty.
$$
This means that
$$
\sum_{e\in E}\sup\big\{||T_{e\tau}||_\infty^t:\tau\in E_A(1,\infty), \, A_{e\tau_1}=1\big\}
<\infty.
$$
For every $t\ge 0$, let
$$
\P(t):=\P_T(\psi_t),
$$
and call $\P(t)$ the topological pressure of the parameter $t$. From Proposition~\ref{p2_2015_11_14}, we have
$
\cF(T)=\{t\ge 0:\P(t)<\infty\}.
$
We record the following basic properties of this pressure.

\bprop\label{p1bf1}
The pressure function $t\mapsto\P(t), \ t \in [0, \infty)$ has the following properties:

\sp\begin{itemize}
\item[(a)] $\P$ is monotone decreasing

\sp\item[(b)] $\P|_{\cF(T)}$ is strictly decreasing.

\sp\item[(c)] $\P|_{\cF(T)}$ is convex, real-analytic, and Lipschitz continuous.
\end{itemize}
\eprop

\bpf
All these statements except real analyticity follow easily from definitions, plus, due to Lemma~\ref{l3_2015_11_17} and Observation~\ref{o1t12}, from their one-sided shift counterparts.
\epf 

\sp Now we can define two significant numbers associated with the  Smale endomorphism $T$: 
$$
\th_T:=\inf\big\{t\ge 0:\P(t)<\infty\big\} 
\  \and \   
B_T:=\inf\big\{t\ge 0:\P(t)\le 0\big\}.
$$
The number $B_T$ is called the \textit{Bowen's parameter} of the system $T$. Clearly
$
\th_T\le B_T.
$

\bthm\label{t1bf2}
If $T:\hat Y\to\hat Y$ is a H\"older conformal skew product Smale endomorphism satisfying the Uniform Geometry Condition (h), then for every $\om\in E_A^+$,
$$
\HD(J_\om)=B_T
$$
\ethm

\fr We first prove this theorem under the assumption that the alphabet $E$ is finite. In this case we will actually prove more.  Recall that if $(Z,\rho)$ is a separable metric space, then a finite Borel measure $\nu$ on $Z$ is called \textit{Ahlfors regular} (or \textit{geometric}) if and only if
$$
C^{-1}r^h\le \nu(B(z,r))\le Cr^h,
$$ 
for all $r >0$, with some independent constants $h\ge 0$, $C \in(0,\infty)$. It is well known and easy to prove that there is at most one $h$ with such property and all Ahlfors regular measures on $Z$ are mutually equivalent, with bounded Radon-Nikodym derivatives. Moreover
$$
h=\HD(Z)=\PD(Z)=\BD(Z),
$$
the two latter being respectively the packing and box-counting dimensions of $Z$. 

Now, if the alphabet $E$ is finite, then the Smale endomorphism $T:\hat Y\to\hat Y$ is of compact type, and in particular, for every $t\ge 0$ there exists $\mu_t$, a unique equilibrium state for the potential $\psi_t:J\to\R$. Since $0\le \P(0)<\infty$ it follows from Proposition~\ref{p1bf1} that
$
\P(B_T)=0.
$

\bthm\label{t1bf3}
If $T:\hat Y\to\hat Y$ is a H\"older conformal skew product Smale endomorphism satisfying the Uniform Geometry Condition (h) and the  alphabet $E$ is finite, then $\ov\mu_{B_T}^\om$ is an Ahlfors regular measure on $J_\om$, for every $\om\in E_A^+$. In particular, for every $\om\in E_A^+$,
$$
\HD(J_\om)=B_T
$$

\ethm

\bpf
Put 
$
h:=B_T.
$
Fix $\om\in E_A^+$ and $z=\hat\pi_2(\tau)\in J_\om$ arbitrary. Let $n=n(z,r)$ be given by \eqref{1vl6}, and denote
$
\nu_\om:=\ov\mu_h^\om\circ\hat\pi_2^{-1}.
$
The formula \eqref{3vl6A} gives, for $\psi=\psi_h$,
\beq\label{1bf4}
\nu_\om(B(z,r))
\ge D^{-1}\exp\(S_n\psi(\sg^{-n}(\tau))\)
=D^{-1}\big|\(T_\tau^n\)'(\hat\pi_2(\sg^{-n}(\tau)))\big|^h.
\eeq
Now, since  $E_A$ is compact (as $E$ is finite) and since  $E_A
\ni\tau\mapsto|T_\tau'(\hat\pi_2(\tau))|\in (0,\infty)$ is continuous, we conclude that there exists a constant $M\in(0,\infty)$ such that
\beq\label{2bf4}
M^{-1}
\le \inf\big\{|T_\tau'(\hat\pi_2(\tau))|:\tau\in E_A\big\}
\le \sup\big\{|T_\tau'(\hat\pi_2(\tau))|:\tau\in E_A\big\}
\le M.
\eeq
Having this and inserting \eqref{2vl6} to \eqref{1bf4}, we get
\beq\label{3bf4}
\nu_\om(B(z,r))\ge (DM^h)^{-1}r^h.
\eeq
To prove an appropriate inequality in the opposite direction, let
$$
\begin{aligned}
\cF(z,r):=
\bigg\{\tau\in &E_A^*(-\infty,-1):T_{\tau\om}^{|\tau|}(Y_{\tau\om})\cap B(z,r/2)\ne\es, \\
  &\diam\(T_{\tau\om}^{|\tau|}(Y_{\tau\om})\)\le r/2 
\  \and \
\diam\Big(T_{\tau|_{-(|\tau|-1)}^{-1}\om}^{|\tau|}\(Y_{\tau|_{-(|\tau|-1)}^{-1}\om}\)\Big)
> r/2 \bigg\}.
\end{aligned}
$$
Thus $\cF(z,r)$ consists of mutually incomparable elements of $E_A^*(-\infty,-1)$, so using \eqref{2bf4} and (BDP3), we get for every $\tau\in \cF(z,r)$ with $n:=|\tau|$, that
$$
\begin{aligned}
\diam\(T_{\tau\om}^n(Y_{\tau\om})\)
&=\diam\Big(T_{\tau|_{-(n-1)}^{-1}\om}^{n-1}\(T_{\tau\om}(Y_{\tau\om})\)\Big)
\ge K^{-1}\Big\|\Big(T_{\tau|_{-(n-1)}^{-1}\om}^{n-1}\Big)'\Big\|_\infty
     \diam\(T_{\tau\om}(Y_{\tau\om})\)\\
&\ge K^{-2}\Big\|\Big(T_{\tau|_{-(n-1)}^{-1}\om}^{n-1}\Big)'\Big\|_\infty
     \big\|T_{\tau\om}'\big\|_\infty\diam(Y_{\tau\om}) \ge 2K^{-2}M^{-1}R\Big\|\Big(T_{\tau|_{-(n-1)}^{-1}\om}^{n-1}\Big)'\Big\|_\infty \\
&\ge 2K^{-3}M^{-1}R\,\diam(Y)^{-1}\diam\Big(T_{\tau|_{-(n-1)}^{-1}\om}^{n-1}\(T_{\tau\om}
     \(Y_{\tau|_{-(n-1)}^{-1}\om}\)\Big) \\
&\ge K^{-3}M^{-1}R\,\diam(Y)^{-1}r
\end{aligned}
$$
Thus Lemma~\ref{l1ct3A} applies with the radius equal to $r/2$, given that
$
\#\cF(z,r)\le \Ga_\ga,
$
where 
$
\ga:=\max\{1,2K^3MR^{-1}\diam(Y)\}.
$
Since also
$
\hat\pi_2^{-1}(B(z,r))\sbt \bu_{\tau\in\cF(z,r)}[\tau\om],
$
we therefore get
$$
\begin{aligned}
\nu_\om(B(z,r))
&\le \ov\mu_h^\om\circ\hat\pi_2^{-1}(\bu_{\tau\in\cF(z,r)}[\tau\om])
 \le \sum_{\tau\in\cF(z,r)}\ov\mu_h^\om\circ\hat\pi_2^{-1}([\tau\om]) \\
&\le K^h\sum_{\tau\in\cF(z,r)}\diam^h\(T_{\tau\om}^{|\tau|}(Y_{\tau\om})\) \le 2^hK^h\#_\Ga r^h
\end{aligned}
$$
along with \eqref{3bf4} this shows that $\nu_\om$ is Ahlfors regular with exponent $h=B_T$.
\epf

\sp{\bf Proof of Theorem~\ref{t1bf2}:} Fix $t>B_T$ arbitrary; then $\P(t)<0$, so for every integer $n\ge 1$ large and $\om\in E_A^+$, we have 
$$
\sum_{\tau\in E_A^*(-n,-1)\atop A_{\tau_{-1}\om_0}=1}\big\|\(T_{\tau\om}^n\)'\big\|_\infty^t
\le \exp\lt(\frac12\P(t)n\rt).
$$
Thus by (BDP2),
\beq\label{1bf5}
\sum_{\tau\in E_A^*(-n,-1)\atop A_{\tau_{-1}\om_0}=1}\diam^t\(T_{\tau\om}^n(Y_{\tau\om})\)
\le K^t\exp\lt(\frac12\P(t)n\rt).
\eeq
$\P(t)<0$, and $\big\{T_{\tau\om}^n(Y_{\tau\om}):\tau\in E_A^*(-n,-1),\, A_{\tau_{-1}\om_0}=1\big\}$ is a cover of $J_\om$, and diameters of its sets converge to zero; so, from \eqref{1bf5},  $\H_t(J_\om)=0$. Thus 
$\HD(J_\om)\le t$ and
$$
\HD(J_\om)\le B_T. 
$$
For the opposite inequality, fix $0\le t<B_T$. Then $\P(t)>0$ and from Theorem~\ref{t2_2015_11_17}, $\P_F(t)>0$ for a finite set $F\sbt E$ s.t $A|_{F\times F}$ is irreducible.  Theorem~\ref{t1bf3} gives $\HD(J_\om(F))>t$, $\forall \om\in E_A^+$. But $J_\om(F)\sbt J_\om$, hence $\HD(J_\om)\ge t$. As $t<B_T$ is arbitrary, we get
$$\HD(J_\om)\ge B_T$$
$\hfill\square$

\section{General Skew Products over Countable--to--1 Endomorphisms.}\label{generalskew}
 We want to enlarge the class of endomorphisms with exact dimensionality of conditional measures on fibers. For general thermodynamic formalism of endomorphisms related to our approach, one can see \cite{Ru}, \cite{M-stable}, \cite{M-DCDS}, \cite{M-ETDS11}, \cite{M-MZ}, \cite{MS}, \cite{MU-JFA}, etc.   Our results on exact dimensionality
  of conditional measures in fibers extend a result on exact dimensionality of conditional measures on stable manifolds of hyperbolic endomorphisms from \cite{M-stable}.
We will apply the results obtained in previous sections to skew products over countable-to-1 endomorphisms. This includes EMR maps, continued fractions transformation, etc.

First, we prove a result about skew products whose base transformations are modeled by 1-sided shifts on a countable alphabet.
Assume we have a skew product $F: X \times Y \to X \times Y$, where $X$ and $Y$ are complete bounded metric spaces, $Y \subset \R^d$ for some $d \ge 1$, and 
$$
F(x, y) = (f(x), g(x, y)),
$$ 
where the map 
$$
Y\ni y \longmapsto g(x, y)
$$
is injective and continuous for every $y \in Y$.\ 
Denote the map $Y\ni y \mapsto g(x, y)$ also by $g_x(y)$. 
Assume $f:X \to X$ is at most countable-to-1, and its dynamics is modeled by a 1-sided Markov shift on a countable alphabet $E$ with the matrix $A$ finitely irreducible, i.e there exists a surjective H\"older continuous map, called \textit{coding}, 
$$
p: E_A^+ \to  X \  \  \text{such that} \ \ p\circ \sigma = f\circ p
$$
Assume conditions (a)--(g) from Section~\ref{CSPSE} are satisfied for  $T_\om:Y_\om\to Y_{\sg_\om}$ given by $T_\omega := g_{p(\omega)},  \  \om \in E_A^+$. Then we call $F: X \times Y \to X \times Y$ a \textit{generalized conformal skew product Smale endomorphism. }

Given the skew product $F$ as above, we can also form a  skew product endomorphism in the following way:
define for every $\om \in E_A^+$, the fiber map $\hat F_\om: Y \to Y$ by 
$$
\hat F_\om(y) = g(p(\om),y).
$$
The system $(\hat Y, \hat F)$ is called \textit{the symbolic lift} of $F$.
If $\hat Y = E_A^+ \times Y$, we obtain a conformal skew product Smale endomorphism $\hat F: \hat Y \to \hat Y$ given by
\begin{equation}\label{skew-general}
\hat F(\omega, y) = (\sigma(\om), \hat F_\om(y))
\end{equation}

As in the beginning of Section~\ref{con}, we study the structure of fibers $J_\om, \ \om \in E_A^+$ and later of the sets $J_x$, $x\in X$. From definition, $J_\om  = \hat\pi_2([\om])$ and it is the set of points of type 
$$
\bi_{n \ge 1}\ov{\hat F_{\tau_{-1}\om}\circ \hat F_{\tau_{-2}\tau_{-1}}\om\circ\ldots \circ \hat F_{\tau_{-n}\ldots\tau_{-1}\om}(Y)}.
$$
Let us call $n$-\textit{prehistory} of the point $x$ with respect to the system $(f, X)$, any finite sequence of points in $X$:
$
(x, x_{-1}, x_{-2}, \ldots, x_{-n})\in X^{n+1},
$
where \
$
f(x_{-1}) = x,\, f(x_{-2}) = x_{-1}, \ldots, f(x_{-n}) = x_{-n+1}.
$ \ 
Call a \textit{complete prehistory} (or simply a \textit{prehistory}) of $x$ with respect to  $(f, X)$, any infinite sequence of consecutive preimages in $X$, i.e. 
$
\hat x = (x, x_{-1}, x_{-2}, \ldots),
$
where  
$
f(x_{-i}) = x_{-i+1},
$
 $i \ge -1$.
The space of complete prehistories is denoted by $\hat X$ and is called the \textit{natural extension} (or \textit{inverse limit}) of  $(f, X)$. We have a bijection $\hat f: \hat X \to \hat X$,
$$
\hat f(\hat x) = (f(x), x, x_{-1}, \ldots).
$$
In this paper, we use the terms inverse limit and natural extension interchangeably, without having necessarily a fixed invariant measure defined on the space $X$. 

\fr We consider on $\hat X$ the canonical metric, which induces the topology equivalent to the one inherited from the product topology on $X^{\N}$. Then $\hat f$ becomes a homeomorphism. For more on dynamics of endomorphisms and inverse limits, one can see for eg. \cite{DKS}, \cite{Ru}, \cite{M-DCDS}, \cite{FM}, \cite{M-stable}, \cite{MS}, \cite{M-MZ}, \cite{M-JSP11}, \cite{M-ETDS11}, \cite{M-Mon}.

In the above notation, we have  
$
f(p(\tau_{-1}\om)) = p(\om)=x,
$
and for all the prehistories of $x$, $\hat x = (x, x_{-1}, x_{-2}, \ldots) \in \hat X$, consider the set $J_x$ of points of type 
$$\bigcap_{n \ge 1} \ov{g_{x_{-1}} \circ g_{x_{-2}}\circ \ldots \circ g_{x_{-n}}(Y)}$$

\fr Notice that, if $\hat \eta = (\eta_0, \eta_1, \ldots)$ is another sequence in $E_A^+$ such that $p(\hat\eta) = x$, then for any $\eta_{-1}$ so that $\eta_{-1}\hat\eta \in E_A^+$, we have $p(\eta_{-1}\hat \eta) = x_{-1}'$ where $x_{-1}'$ is some 1-preimage (i.e preimage of order 1) of $x$. 
Hence from the definitions and the discussion above, we see that 
\begin{equation}\label{J_x}
J_x = \mathop{\bigcup}\limits_{\omega\in E_A^+, p(\omega) = x} J_\omega
\end{equation}

Let us denote the respective fibered limit sets for $T$ and $F$ by: 
\begin{equation}\label{J(X)}
J = \bigcup_{\omega \in E_A^+} \{\omega\}\times J_\omega \subset E_A^+ \times Y  \  \ \text{and} \  \  J(X):= \bigcup_{x \in X}\{x\}\times J_x \subset X \times Y
\end{equation}
So
$
\hat F(J)=J$
  and $ 
F(J(X))=J(X).
$
The H\"older continuous projection $p_J: J \to J(X)$ is 
$$
p_J(\omega, y) = (p(\omega), y),
$$
we obtain 
$
F\circ p_J = p_J \circ \hat F$. \ 
In the sequel, $\hat\pi_2:E_A\to Y$ and $\hat\pi:E_A\to E_A^+\times Y$ are the maps defined in Section~\ref{con} and,
$$
\hat \pi(\tau) = (\tau|_0^\infty, \hat\pi_2(\tau)).
$$
Now, it is important to know if enough points $x \in X$ have unique coding sequences in $E_A^+$. 

\begin{dfn}\label{phi-inj}
Let $F: X \times Y \to X \times Y$ be a generalized conformal skew product Smale endomorphism. Let $\mu$ be a Borel probability measure $X$. We then say that the coding $p:E_A^+ \to X$ is $\mu$-injective, if there exists a $\mu$-measurable set $G \subset X$ with $\mu(G) = 1$ such that for every point $x \in G$, the set $p^{-1}(x)$ is a singleton in $E_A^+$. 

\fr Denote such a set $G$ by $G_\mu$ and for $x\in G_\mu$ the only element of $p^{-1}(x)$ by $\om(x)$.
\end{dfn}

\begin{prop}\label{phi-i}
If the coding $p:E_A^+ \to X$ is $\mu$-injective, then for every $x\in G_\mu$, we have
$$
J_x = J_{\omega(x)}.
$$
\end{prop}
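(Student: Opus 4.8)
The plan is to unwind the definitions and use the fundamental relation \eqref{J_x}. Recall that for any $x\in X$ we have the decomposition
$$
J_x = \bigcup_{\omega\in E_A^+,\ p(\omega)=x} J_\omega.
$$
This is the only structural fact about $J_x$ that we need.

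Now suppose $x\in G_\mu$. By the definition of $\mu$-injectivity of the coding $p$, the preimage set $p^{-1}(x)$ is a singleton in $E_A^+$; by notation it equals $\{\omega(x)\}$. Therefore the union on the right-hand side of the displayed formula above runs over exactly one index, namely $\omega = \omega(x)$. Substituting this into \eqref{J_x} we immediately get
$$
J_x = \bigcup_{\omega\in E_A^+,\ p(\omega)=x} J_\omega = J_{\omega(x)},
$$
which is the assertion of the proposition.

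There is essentially no obstacle here: the proposition is a direct corollary of the set-theoretic identity \eqref{J_x} combined with the hypothesis that $p^{-1}(x)$ is a singleton for every $x$ in $G_\mu$. The only point worth a line of care is to confirm that \eqref{J_x} was established for every $x\in X$ (not merely $\mu$-almost every $x$), which it was, since it followed purely from the definitions of $J_x$ and $J_\omega$ in terms of descending intersections of images of $Y$ under the fiber maps, together with the commuting relation $p\circ\sigma = f\circ p$. Hence the restriction to $G_\mu$ enters only through the singleton property of the fibers of $p$, and the proof is complete.
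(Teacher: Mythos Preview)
Your proof is correct. The paper's own argument does not invoke \eqref{J_x} directly but instead re-derives its content in this special case: it picks an arbitrary $f$-preimage $x_{-1}$ of $x$, lifts it to some $\eta\in E_A^+$ via surjectivity of $p$, and uses $p\circ\sigma=f\circ p$ together with uniqueness of the coding of $x$ to conclude $\sigma(\eta)=\omega(x)$, so every prehistory of $x$ is coded by an extension of $\omega(x)$. Your route is cleaner because \eqref{J_x} already packages exactly this observation; once that identity is available, the proposition is indeed an immediate consequence of the singleton hypothesis, and there is no need to revisit the prehistory description.
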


\begin{proof}
Take $x\in G_\mu$, and let $x_{-1}\in X$ be an $f$-preimage of $x$, i.e $f(x_{-1}) = x$. Since $p:E_A^+ \to X$ is surjective, there exists  $\eta \in E_A^+$ such that $p(\eta) = x_{-1}$. But this implies that 
$
f(x_{-1}) = f\circ p(\eta) = p\circ \sigma(\eta) = x.
$
Then, from the uniqueness of the coding sequence for $x$, it follows that $\sigma(\eta) = \omega(x)$, whence $x_{-1} =p(\omega_{-1}\omega(x))$, for some $\omega_{-1}\in E$. Since
$
J_x=\bigcap_{n \ge 1}\ov{g_{x_{-1}} \circ g_{x_{-2}}\circ \ldots \circ g_{x_{-n}}(Y)},
$
it follows that $J_x = J_{\omega(x)}$. 

\end{proof}

In the sequel we work only with $\mu$-injective codings, and the measure $\mu$ will be clear from the context. Also given a metric space $X$ with a coding $p:E_A^+ \to X$, and a potential $\phi:X \to \mathbb R$, we say that $\phi$ is \textit{ H\"older continuous} if $\phi\circ p$ is  H\"older continuous.

 Now consider a potential $\phi: J(X) \to \R$ such that the potential 
$$
\widehat\phi: = \phi\circ p_J\circ \hat \pi: E_A \to \R
$$
is  H\"older continuous and summable. For example, $\widehat\phi$ is  H\"older continuous if $\phi: J(X) \to \R$ is itself  H\"older continuous. This case will be quite frequent in examples below. Define now
\beq\label{1_2017_01_13}
\mu_\phi := \mu_{\widehat\phi} \circ (p_J\circ \hat \pi)^{-1},
\eeq
and call it the equilibrium measure of $\phi$ on $J(X)$ with respect to the skew product $F$. 

\sp Now, let us consider the partition $\xi'$ of $J(X)$ into the fiber sets $\{x\} \times J_x, \ x\in X$, and the conditional measures 
$\mu^x_\phi$ associated to $\mu_\phi$ with respect to the measurable partition $\xi'$ (see \cite{Ro}). Recall that for each $\omega \in E_A^+$, we have 
$
\hat\pi_2([\omega]) = J_\omega.
$

\

Denote by $p_1:X\times Y\to X$  the canonical projection onto the first coordinate, i.e. 
$$
p_1(x,y)=x.
$$

\begin{thm}\label{sing}
Let $F: X \times Y \to X \times Y$ be a generalized conformal skew product Smale endomorphism. Let $\phi:J(X) \to \R$ be a potential such that $\widehat\phi= \phi \circ p_J\circ \hat \pi: E_A \to \R$ is a  H\"older continuous summable potential on $E_A$. 
Assume that the coding $p:E_A^+ \to X$ is $\mu_\phi\circ p_1^{-1}$--injective, and  denote the corresponding set $G_{\mu_\phi}\sbt X$ by $G_\phi$. Then:

\sp\begin{enumerate}
\item $J_x = J_{\omega(x)}$ for every $x\in G_\phi$.

\sp\item With $\bar\mu^\omega_\psi$, $\omega \in E_A^+$ the conditional measures of $\mu_{\widehat\phi}$, we have for $\mu_\phi\circ p_1^{-1}$--a.e.  $x\in G_\phi$,
$$
\mu_\phi^x = \bar\mu_{\widehat\phi}^{\omega(x)}\circ (p_J\circ\hat\pi)^{-1}, 
$$
or equivalently, if  $\mu_\phi^x$ and $\bar\mu_{\widehat\phi}^{\omega(x)}$ are viewed   on $J_x$ and $E_A^-$, 
$
\mu_\phi^x = \bar\mu^{\omega(x)}_{\widehat\phi}\circ\hat\pi_2^{-1} 
$.
\end{enumerate}
\end{thm}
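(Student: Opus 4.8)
The plan is to reduce everything to the symbolic picture already established in Section~\ref{con} and Section~\ref{VL}, using the $\mu_\phi\circ p_1^{-1}$-injectivity of the coding $p$ to transfer conditional measures back and forth. First I would dispose of item (1): it is immediate from Proposition~\ref{phi-i} applied with the measure $\mu=\mu_\phi\circ p_1^{-1}$, since by hypothesis $p$ is $\mu$-injective with $G_\mu=G_\phi$. So the whole content is item (2). The guiding idea is the chain of factor maps
$$
E_A \xrightarrow{\ \hat\pi\ } J \xrightarrow{\ p_J\ } J(X) \xrightarrow{\ p_1\ } X,
$$
together with $\pi_1:E_A\to E_A^+$ and $p:E_A^+\to X$, which satisfy $p_1\circ p_J\circ\hat\pi = p\circ\pi_1$. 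By definition \eqref{1_2017_01_13}, $\mu_\phi=\mu_{\widehat\phi}\circ(p_J\circ\hat\pi)^{-1}$, so $\mu_\phi\circ p_1^{-1}=\mu_{\widehat\phi}\circ\pi_1^{-1}\circ p^{-1}=m_{\widehat\phi}\circ p^{-1}$, where $m_{\widehat\phi}=\mu_{\widehat\phi}\circ\pi_1^{-1}$ is the projected Gibbs state on $E_A^+$. This identifies the base measure on the $X$-side with the pushforward under $p$ of the base measure on the symbol side.

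Next I would set up the disintegration carefully. On the symbol side we have the canonical system of conditional measures $\{\bar\mu_{\widehat\phi}^{\,\omega}:\omega\in E_A^+\}$ of $\mu_{\widehat\phi}$ with respect to the partition $\Pa_-$, characterized (cf. the discussion before Theorem~\ref{t4111105p70}) by
$$
\int_{E_A} g\,d\mu_{\widehat\phi} = \int_{E_A^+}\int_{[\omega]} g\,d\bar\mu_{\widehat\phi}^{\,\omega}\,dm_{\widehat\phi}(\omega)
$$
for $g\in L^1(\mu_{\widehat\phi})$. Pushing forward through $p_J\circ\hat\pi$ and using that, on the full-measure set $\pi_1^{-1}(p^{-1}(G_\phi))$, the fiber $(p_1\circ p_J\circ\hat\pi)^{-1}(x)$ equals $\pi_1^{-1}(\omega(x))=[\omega(x)]$ (because $p^{-1}(x)=\{\omega(x)\}$ is a singleton, by $\mu$-injectivity), one obtains for every bounded measurable $h$ on $J(X)$
$$
\int_{J(X)} h\,d\mu_\phi = \int_{X}\Big(\int_{[\omega(x)]} h\circ p_J\circ\hat\pi\,d\bar\mu_{\widehat\phi}^{\,\omega(x)}\Big)\,d(\mu_\phi\circ p_1^{-1})(x).
$$
Since the restriction of $p_J\circ\hat\pi$ to $[\omega(x)]$ maps into $\{x\}\times J_x=\{x\}\times J_{\omega(x)}$ (by item (1) and Lemma~\ref{l1_2015_11_13}/the definition of $J_\omega$), the inner integral is the integral of $h$ against the measure $\bar\mu_{\widehat\phi}^{\,\omega(x)}\circ(p_J\circ\hat\pi)^{-1}$ supported on $\{x\}\times J_x$. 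By the essential uniqueness of the disintegration of $\mu_\phi$ over the partition $\xi'$, this forces $\mu_\phi^x=\bar\mu_{\widehat\phi}^{\,\omega(x)}\circ(p_J\circ\hat\pi)^{-1}$ for $\mu_\phi\circ p_1^{-1}$-a.e.\ $x\in G_\phi$, which is the first displayed formula. The equivalent reformulation follows because on $[\omega(x)]$ the map $p_J\circ\hat\pi$ agrees (after identifying $\{x\}\times J_x$ with $J_x$, and $[\omega(x)]\subset E_A$ with a subset of $E_A^-$ via the past coordinates $\tau|_{-\infty}^{-1}$) with $\hat\pi_2$, since $p_J$ is the identity on the $Y$-coordinate and $\hat\pi=(\pi_1,\hat\pi_2)$.

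The main obstacle I expect is the measure-theoretic bookkeeping around the $\mu$-injectivity, specifically making rigorous the claim that $(p_1\circ p_J\circ\hat\pi)^{-1}(x)=[\omega(x)]$ on a set of full measure and that this identification respects the partitions $\Pa_-$ on $E_A$ and $\xi'$ on $J(X)$; one must check that the null set where $p^{-1}$ fails to be a singleton, pulled back through $\pi_1$ and $\hat\pi$, is still null for $\mu_{\widehat\phi}$ (it is, since $m_{\widehat\phi}(p^{-1}(X\setminus G_\phi))=\mu_\phi\circ p_1^{-1}(X\setminus G_\phi)=0$ and $\mu_{\widehat\phi}\circ\pi_1^{-1}=m_{\widehat\phi}$), and that the resulting family $x\mapsto\bar\mu_{\widehat\phi}^{\,\omega(x)}\circ(p_J\circ\hat\pi)^{-1}$ depends measurably on $x$ so that it genuinely is \emph{the} disintegration. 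All the relevant measurability is available because $p$, $p_J$, $\hat\pi$ are Borel (indeed H\"older) and $\omega:G_\phi\to E_A^+$ is the Borel section of $p$ guaranteed on $G_\phi$; uniqueness of disintegrations over a measurable partition (Rokhlin~\cite{Ro}) then closes the argument. Everything else is a direct change of variables in the defining integral identity for conditional measures.
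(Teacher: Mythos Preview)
Your proposal is correct and follows essentially the same route as the paper: dispose of (1) via Proposition~\ref{phi-i}, then for (2) write $\int_{J(X)} H\,d\mu_\phi$ both via the symbolic disintegration $\{\bar\mu_{\widehat\phi}^{\,\omega}\}$ pushed forward through $p_J\circ\hat\pi$ and via the $\xi'$-disintegration $\{\mu_\phi^x\}$, use $\mu_\phi\circ p_1^{-1}=\mu_{\widehat\phi}\circ\pi_1^{-1}\circ p^{-1}$ together with $\mu$-injectivity to rewrite the symbolic side as an integral over $G_\phi$, and conclude by uniqueness of Rokhlin conditional measures. The paper carries out exactly this computation (equations \eqref{1_2017_02_08}--\eqref{phipsi}), with somewhat less attention than you give to the measurability bookkeeping.
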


\begin{proof}
Part (1) is just Proposition~\ref{phi-i}. We thus deal with part (2) only. By the definition of conditional measures, we have for every $\mu_\phi$--integrable function $H:J(X)\to\R$ that
\beq\label{1_2017_02_08}
\int_{J(X)} H d\mu_\phi 
= \int_{E_A}H\circ p_J\circ \hat \pi d\mu_{\widehat\phi}
= \int_{E_A^+} \int_{[\omega]}H\circ p_J\circ \hat \pi\, d\bar\mu_{\widehat\phi}^\omega \ d\mu_{\widehat\phi}\circ \pi_1^{-1}(\omega)
\eeq
and
\beq\label{2_2017_02_08}
\int_{J(X)}H d\mu_\phi 
=\int_X \int_{\{x\}\times J_x} H\, d\mu_\phi^x\, d\mu_\phi\circ p_1^{-1}(x)
\eeq
But from the definitions of various projections: 
\beq\label{1_2017_02_23}
\mu_\phi\circ p_1^{-1}
= \mu_{\widehat\phi}\circ (p_J\circ \hat \pi)^{-1}\circ p_1^{-1}
= \mu_{\widehat\phi}\circ (p_1\circ p_J\circ \hat \pi)^{-1}            
= \mu_{\widehat\phi}\circ (p\circ\pi_1)^{-1}
= \mu_{\widehat\phi}\circ \pi_1^{-1}\circ p^{-1}.
\eeq
Therefore, remembering also that $\mu_\phi\circ p_1^{-1}(G_\phi)=1$, we get that
\begin{equation}\label{phipsi}
\begin{aligned}
\int_{E_A^+} \int_{[\omega]}H\circ p_J \circ \hat \pi\, & d\bar\mu_{\widehat\phi}^\omega \ d\mu_{\widehat\phi}\circ \pi_1^{-1}(\omega)=\int_{E_A^+} \int_{\{p(\om)\}\times J_{p(\om)}} H d\bar\mu_{\widehat\phi}^\omega\circ (p_J\circ \hat \pi)^{-1}\, d\mu_{\widehat\phi}\circ \pi_1^{-1}(\omega) \\
&=\int_{G_\phi}\int_{\{x\}\times J_x} H d\bar\mu_{\widehat\phi}^{\omega(x)}\circ (p_J\circ \hat \pi)^{-1}\, d\mu_{\widehat\phi}\circ \pi_1^{-1}\circ p^{-1}(x) \\
&=\int_{G_\phi}\int_{\{x\}\times J_x} H d\bar\mu_{\widehat\phi}^{\omega(x)}\circ (p_J\circ \hat \pi)^{-1}\,  d\mu_\phi\circ p_1^{-1}(x).
\end{aligned}
\end{equation}
Hence this, together with \eqref{1_2017_02_08} and \eqref{2_2017_02_08}, gives
$$
\int_{G_\phi}\int_{\{x\}\times J_x} H\, d\mu_\phi^x\, d\mu_\phi\circ p_1^{-1}(x)
=\int_{G_\phi}\int_{\{x\}\times J_x} H d\bar\mu_{\widehat\phi}^{\omega(x)}\circ (p_J\circ \hat \pi)^{-1}\,  d\mu_\phi\circ p_1^{-1}(x).
$$
Thus, the uniqueness of the system of Rokhlin's canonical conditional measures yields
$
\mu_\phi^x=\bar\mu_{\widehat\phi}^{\omega(x)}\circ (p_J\circ\hat\pi)^{-1}
$
for $\mu_\phi\circ p_1^{-1}$--a.e. $x\in G_\phi$. This means that the first part of (2) is established. But $
p_J\circ \hat \pi=(p\circ\pi_1)\times \hat\pi_2$, and thus $
p_J\circ \hat \pi|_{[\om(x)]}=\{x\}\times\hat\pi_2|_{[\om(x)]}$. 

\end{proof}

 Define the \textit{Lyapunov exponent} for an $F$-invariant measure $\mu$ on  $J(X) = \bigcup\limits_{x\in X} \{x\}\times J_x$ by: 
$$
\chi_\mu(F) = - \int_{J(X)} \log|g_x'(y)| \,  d\mu(x,y). 
$$

\

\fr In conclusion, from Theorem~\ref{sing}, Theorem~\ref{t1vl4}, and definition \eqref{1_2017_01_13}, we obtain the following result for skew product endomorphisms over countable-to-1 maps $f:X \to X$:

\begin{thm}\label{codedskews}
Let $F: X \times Y \to X \times Y$ a generalized conformal skew product Smale endomorphism. Let $\phi:J(X) \to \R$ be a potential such that 
$$
\hat\phi:= \phi \circ p_J\circ \hat \pi: E_A \to \R
$$ 
is  H\"older continuous summable. 
Assume the coding $p:E_A^+ \to X$ is $\mu_\phi\circ p_1^{-1}$--injective.

\fr Then for $\mu_\phi\circ p_1^{-1}$-a.e $x \in X$, the conditional measure $\mu_\phi^x$ is exact dimensional on $J_x$, and
$$
\lim_{r\to 0} \frac{\log \mu_\phi^x(B(y, r))}{\log r} 
= \frac{\h_{\mu_\phi}(F)}{\chi_{\mu_\phi}(F)}
=\HD(\mu_\phi^x),
$$
for $\mu_\phi^x$-a.e $y \in J_x$; hence, equivalently, for $\mu_\phi$-a.e $(x, y) \in J(X)$.
\end{thm}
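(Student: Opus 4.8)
The plan is to reduce everything to the already-proved symbolic statement, Theorem~\ref{t1vl4}, by transporting it through the coding map $p$ and the projections $p_J, \hat\pi$. First I would recall the key identifications established just before the statement: by definition \eqref{1_2017_01_13}, $\mu_\phi = \mu_{\widehat\phi}\circ(p_J\circ\hat\pi)^{-1}$ where $\widehat\phi = \phi\circ p_J\circ\hat\pi = \psi$ is locally H\"older continuous summable on $E_A$; by Theorem~\ref{sing}(2), for $\mu_\phi\circ p_1^{-1}$-a.e.\ $x\in G_\phi$ the conditional measure satisfies $\mu_\phi^x = \bar\mu_{\widehat\phi}^{\omega(x)}\circ\hat\pi_2^{-1}$ when viewed as a measure on $J_x = J_{\omega(x)}$ (using part (1)); and by \eqref{1_2017_02_23}, $\mu_\phi\circ p_1^{-1} = \mu_{\widehat\phi}\circ\pi_1^{-1}\circ p^{-1} = m_\psi\circ p^{-1}$, so that a property holding for $m_\psi$-a.e.\ $\omega\in E_A^+$ automatically holds for $\mu_\phi\circ p_1^{-1}$-a.e.\ $x$ via $\omega = \omega(x)$.

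Next I would invoke Theorem~\ref{t1vl4} applied to the potential $\psi = \widehat\phi$ on $E_A$: for $m_\psi$-a.e.\ $\omega\in E_A^+$, the measure $\bar\mu_\psi^\omega\circ\hat\pi_2^{-1}$ on $J_\omega$ is exact dimensional with
$$
\lim_{r\to 0}\frac{\log\bar\mu_\psi^\omega\circ\hat\pi_2^{-1}(B(z,r))}{\log r} = \frac{\h_{\mu_\psi}(\sg)}{\chi_{\mu_\psi}(\sg)} = \HD\bigl(\bar\mu_\psi^\omega\circ\hat\pi_2^{-1}\bigr)
$$
for $\bar\mu_\psi^\omega\circ\hat\pi_2^{-1}$-a.e.\ $z\in J_\omega$. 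Since $\mu_\phi^x = \bar\mu_\psi^{\omega(x)}\circ\hat\pi_2^{-1}$ for $\mu_\phi\circ p_1^{-1}$-a.e.\ $x\in G_\phi$, and since $\mu_\phi\circ p_1^{-1}(G_\phi)=1$, I conclude that $\mu_\phi^x$ is exact dimensional on $J_x$ with pointwise dimension $\h_{\mu_\psi}(\sg)/\chi_{\mu_\psi}(\sg)$ for $\mu_\phi\circ p_1^{-1}$-a.e.\ $x\in X$.

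The remaining task is purely a matter of matching the symbolic quantities to the dynamical ones, i.e.\ checking $\h_{\mu_\psi}(\sg) = \h_{\mu_\phi}(F)$ and $\chi_{\mu_\psi}(\sg) = \chi_{\mu_\phi}(F)$. For the entropy, since $\mu_\phi = \mu_\psi\circ(p_J\circ\hat\pi)^{-1}$ and $p_J\circ\hat\pi$ intertwines $\sg$ with $F$, we have $\h_{\mu_\phi}(F)\le\h_{\mu_\psi}(\sg)$; the reverse inequality follows because $\mu_\psi$ is (canonically isomorphic to) the Rokhlin natural extension of $\mu_\psi\circ\pi_1^{-1}$, while $\mu_\phi$ projects onto the base in a way that, through the H\"older coding $p$ (which is $\mu_\phi\circ p_1^{-1}$-injective, hence an isomorphism mod zero on the relevant sets), forces $\h_{\mu_\psi\circ\pi_1^{-1}}(\sg) = \h_{\mu_\phi\circ p_1^{-1}}(f)$ and $\h_{\mu_\phi}(F)\ge\h_{\mu_\phi\circ p_1^{-1}}(f)$; comparing with the natural extension identity $\h_{\mu_\psi\circ\pi_1^{-1}}(\sg)=\h_{\mu_\psi}(\sg)$ closes the loop, exactly as in Observation~\ref{o1t11}. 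For the Lyapunov exponents, the change of variables $z = \hat\pi_2(\tau)$ and $x = p(\omega)$ in the defining integrals, together with the conformality data (condition (e)) and the relation $\hat F_\om = g_{p(\om)}$, gives $-\int\log|T_{\tau|_0^{+\infty}}'(\hat\pi_2(\tau))|\,d\mu_\psi = -\int\log|g_x'(y)|\,d\mu_\phi$ directly. I expect the main obstacle to be the entropy identity: one must be careful that the $\mu$-injectivity of the coding yields a genuine measure-theoretic isomorphism of $(f,X,\mu_\phi\circ p_1^{-1})$ with $(\sg,E_A^+,m_\psi)$ — not merely a factor map — so that no entropy is lost when passing between $X$ and $E_A^+$; once that is in hand, the natural-extension argument of Observation~\ref{o1t11} applies verbatim and the rest is bookkeeping.
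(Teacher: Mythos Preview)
Your proposal is correct and follows the same route as the paper: the paper states the theorem as ``an immediate consequence of Theorem~\ref{sing}, Theorem~\ref{t1vl4}, and definition \eqref{1_2017_01_13}'', and that is precisely your reduction. The paper does not spell out the entropy and Lyapunov exponent identifications $\h_{\mu_\psi}(\sg)=\h_{\mu_\phi}(F)$ and $\chi_{\mu_\psi}(\sg)=\chi_{\mu_\phi}(F)$ at all, treating them as tacit; your added justification via $\mu$-injectivity and the natural-extension argument is more than the paper provides, but is in the same spirit as Observation~\ref{o1t11}.
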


As an immediate consequence of this theorem, we get the following.

\begin{cor}\label{codedskews_A}
Let $F: X \times Y \to X \times Y$ a generalized conformal skew product Smale endomorphism. Let $\phi:J(X) \to \R$ be a  H\"older continuous potential such that 
$$
\sum_{e \in E} \exp(\sup(\phi|_{\pi([e])\times Y}))< \infty.
$$ 
Assume that the coding $p:E_A^+ \to X$ is $\mu_\phi\circ p_1^{-1}$--injective.
Then, for $\mu_\phi\circ p_1^{-1}$-a.e $x \in X$, the conditional measure $\mu_\phi^x$ is exact dimensional on $J_x$, and for $\mu_\phi^x$-a.e $y \in J_x$,
$$
\lim_{r\to 0} \frac{\log \mu_\phi^x(B(y, r))}{\log r} 
= \frac{\h_{\mu_\phi}(F)}{\chi_{\mu_\phi}(F)}
=\HD(\mu_\phi^x)
$$
\end{cor}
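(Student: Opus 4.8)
The plan is to deduce this corollary directly from Theorem~\ref{codedskews} by verifying that its hypotheses are satisfied in the present more restrictive setting. The only gap between the two statements is that Theorem~\ref{codedskews} requires the composed potential $\psi := \phi\circ p_J\circ\hat\pi : E_A\to\R$ to be locally H\"older continuous and summable, whereas here we are only given that $\phi:J(X)\to\R$ is locally H\"older continuous together with the summability condition $\sum_{e\in E}\exp\bigl(\sup(\phi|_{\pi([e])\times Y})\bigr)<+\infty$. So the whole content of the proof is to check these two properties of $\psi$.

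First I would check local H\"older continuity of $\psi$. Recall that, by the definition given just before Theorem~\ref{sing}, a potential on a coded metric space $X$ is called locally H\"older continuous precisely when its composition with the coding is locally H\"older continuous; more generally $\widehat\phi = \phi\circ p_J\circ\hat\pi$ is locally H\"older continuous whenever $\phi:J(X)\to\R$ is. This is immediate from the fact that $p_J = (p\times\Id)|_J$ with $p$ H\"older continuous (the coding is assumed H\"older), that $\hat\pi$ is H\"older continuous (this is exactly Observation~\ref{o1_2015_11_07} together with the construction of $\hat\pi$ in Section~\ref{con}, since the fiber maps are Lipschitz with constant $\l^{-1}$), and that a composition of H\"older maps with a locally H\"older function is locally H\"older; the ``local'' qualifier is preserved because all the maps involved respect cylinders, i.e. they depend only on coordinates of nonnegative index up to exponentially small error. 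This is precisely the remark already made in the paragraph preceding equation \eqref{1_2017_01_13}, so I would simply invoke it.

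Next I would check summability of $\psi$. By Observation~\ref{o1t13.1}, a potential $\Psi$ on $J$ is summable if and only if $\Psi\circ\hat\pi$ is summable, so it suffices to see that $\phi\circ p_J$ is summable on $J$, i.e. that $\sum_{e\in E}\exp\bigl(\sup((\phi\circ p_J)|_{[e]_T})\bigr)<+\infty$. But $[e]_T = \hat\pi_2([e])\subset J_e$ and $p_J([e]_T) = \{p(\om):\om\in[e]\}\times$(fiber)$\subset \pi([e])\times Y$, where I write $\pi([e]) = p([e])\subset X$ for the image of the cylinder. Hence $\sup((\phi\circ p_J)|_{[e]_T})\le \sup(\phi|_{\pi([e])\times Y})$, and the assumed bound $\sum_{e\in E}\exp\bigl(\sup(\phi|_{\pi([e])\times Y})\bigr)<+\infty$ gives summability of $\phi\circ p_J$, hence of $\psi = \phi\circ p_J\circ\hat\pi$. (Equivalently, one could cite Proposition~\ref{p2_2015_11_14}: summability of a locally H\"older potential is equivalent to finiteness of its topological pressure, and the pressure bound follows from the given sum being finite.)

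Once $\psi$ is shown to be locally H\"older continuous and summable, the $\mu_\phi\circ p_1^{-1}$-injectivity of the coding is assumed outright, so all hypotheses of Theorem~\ref{codedskews} hold and its conclusion --- exact dimensionality of $\mu_\phi^x$ on $J_x$ with pointwise dimension $\h_{\mu_\phi}(F)/\chi_{\mu_\phi}(F)$ equal to $\HD(\mu_\phi^x)$ for $\mu_\phi\circ p_1^{-1}$-a.e.\ $x$ --- is exactly the assertion of the corollary. I do not anticipate any real obstacle here; the one point requiring a little care is making sure the ``local'' part of local H\"older continuity is genuinely inherited through the composition with $\hat\pi$ (which mixes in negative-index coordinates), but this is controlled by the exponential contraction \eqref{3111205} and is already built into the H\"older continuity of $\hat\pi$ established in Section~\ref{con}.
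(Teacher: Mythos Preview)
Your proposal is correct and matches the paper's approach exactly: the paper simply states that Corollary~\ref{codedskews_A} is ``an immediate consequence'' of Theorem~\ref{codedskews}, and you have spelled out the two routine verifications (local H\"older continuity of $\psi=\phi\circ p_J\circ\hat\pi$ from H\"older continuity of the coding and of $\hat\pi$, and summability of $\psi$ from the assumed sum over $e\in E$) that make this immediate.
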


\
%\brem\label{r1_2017_02_15}
%Note that if in the settings of this section, in particular in Theorem~\ref{sing}, Theorem~\ref{codedskews}, and Corollary~\ref{codedskews_A}, the ``symbol'' map $\hat F:J\to J$ is assumed to be of compact type, then
%$\mu_\phi=\mu_{\phi\circ p_J}\circ p_J^{-1},$ where $\mu_{\phi\circ p_J}$ is the unique equilibrium state (in the sense of variational principle) of the potential $\phi\circ p_J:J\to\R$, produced in Theorem~\ref{t2t13}.
%\erem

By using Theorem \ref{codedskews}, we will  prove exact dimensionality of conditional measures of equilibrium states on fibers for many types of skew products.

First, we prove a  result about \textit{global} exact dimensionality of measures on  $J(X)$. 

\begin{thm}\label{globalexact}
Let $F: X \times Y \to X \times Y$ a generalized conformal skew product Smale endomorphism. Assume that $X\sbt\R^d$ with some integer $d\ge 1$. Let $\mu$ be a Borel probability $F$--invariant measure on $J(X)$, and $(\mu^x)_{x\in X}$ be the Rokhlin's canonical sytem of conditional measures of $\mu$, with respect to the partition $\(\{x\}\times J_x\)_{x\in X}$. Assume that:
\begin{itemize}
\item[(a)]  There exists $\a>0$ such that for $\mu\circ p_1^{-1}$-a.e $x \in X$, the conditional measure $\mu^x$ is exact dimensional and $\HD(\mu^x)=\a$, 

\item[(b)] The measure $\mu\circ p_1^{-1}$ is exact dimensional on $X$. 
\end{itemize}

 \fr Then, the measure $\mu$ is exact dimensional on $J(X)$, and for $\mu$-a.e $(x,y) \in J(X)$,
$$
\HD(\mu)=\lim\limits_{r\to 0} \frac{\log \mu(B((x, y), r))}{\log r} = \alpha + \HD(\mu\circ p_1^{-1}).
$$
\end{thm}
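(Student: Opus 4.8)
The plan is to compare the $\mu$-mass of metric balls in $X\times Y$ with products $\nu(B_X(x,r))\cdot\mu^{x}(B_Y(y,r))$, where $\nu:=\mu\circ p_1^{-1}$, $\beta:=\HD(\nu)$, $\alpha$ is as in hypothesis (a), and $B_X$, $B_Y$ denote balls in $X$, $Y$. Since $B_X(x,r/2)\times B_Y(y,r/2)\sbt B((x,y),r)\sbt B_X(x,r)\times B_Y(y,r)$, passing from the metric ball to a box changes $\log\mu(\cdot)/\log r$ by $o(1)$ as $r\to 0$, so it suffices to estimate $\mu(B_X(x,r)\times B_Y(y,r))$. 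Disintegrating along the measurable partition $(\{x'\}\times J_{x'})_{x'\in X}$ into Rokhlin's canonical conditional measures $(\mu^{x'})$,
$$\mu\big(B_X(x,r)\times B_Y(y,r)\big)=\int_{B_X(x,r)}\mu^{x'}\big(B_Y(y,r)\big)\,d\nu(x').$$
The trivial bound $\mu(B((x,y),r))\le\nu(B_X(x,r))$ together with hypothesis (b) already gives $\varliminf_{r\to0}\log\mu(B((x,y),r))/\log r\ge\beta$ for $\mu$-a.e. $(x,y)$; the content is to add the fibre exponent $\alpha$ and to obtain the matching upper bound for the $\varlimsup$.

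Next I would uniformize the two hypotheses by Egorov and Lusin. Fixing $\e>0$, from hypothesis (a) and Definition~\ref{edm} there is a set $A_\e\sbt J(X)$ with $\mu(A_\e)>1-\e^2$ and $r_\e>0$ such that $r^{\alpha+\e}\le\mu^{x}(B_Y(y,r))\le r^{\alpha-\e}$ for all $(x,y)\in A_\e$ and $0<r<r_\e$; a Fubini/Markov argument then shows that outside a set of $\nu$-measure $O(\e)$ the $x$-slice $A_{\e,x}$ satisfies $\mu^{x}(A_{\e,x})>1-\e$. From hypothesis (b) there is $H_\e\sbt X$ with $\nu(H_\e)>1-\e$ and $r^{\beta+\e}\le\nu(B_X(x,r))\le r^{\beta-\e}$ for $x\in H_\e$, $0<r<r_\e$. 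The aim is to insert these into the displayed disintegration integral.

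The main obstacle is the aggregation of the fibrewise bounds over the neighbouring fibres $x'\in B_X(x,r)$: hypothesis (a) controls $\mu^{x'}(B_Y(\cdot,r))$ only at $\mu^{x'}$-typical second coordinates, whereas in the integral the second coordinate is the fixed $y$ from the centre, which is $\mu^{x}$-typical and a priori unrelated to $\mu^{x'}$; a careless use of Egorov only produces additive errors of size $\comp\e\,\nu(B_X(x,r))$ which swamp the fibre contribution. To overcome this I would argue as follows. Using $X\sbt\R^{d}$, for $\nu$-a.e. $x$ the Lebesgue density theorem (Besicovitch covering) makes the $\nu$-proportion of $x'\in B_X(x,r)$ lying in $H_\e$ tend to $1$; applying Fubini to $A_\e$, once $(x,y)$ lies in a suitable full-$\mu$-measure set the $\nu$-proportion of $x'\in B_X(x,r)$ whose slice $A_{\e,x'}$ meets $B_Y(y,2r)$ also tends to $1$. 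For each such good $x'$ choose $y'\in A_{\e,x'}\cap B_Y(y,2r)$; the inclusions $B_Y(y',r/3)\sbt B_Y(y,r)\sbt B_Y(y',3r)$ together with the two-sided estimate of $A_\e$ at $(x',y')$ give $r^{\alpha+2\e}\le\mu^{x'}(B_Y(y,r))\le r^{\alpha-2\e}$ on a $\nu$-substantial subset of $B_X(x,r)$. Controlling the contribution of the remaining (bad) fibres is precisely where I expect to need the $F$-invariance of $\mu$: iterating $F^{-n}$ expands the $Y$-direction by $\comp\lambda^{n}$ while the base map $f$ mixes, which should upgrade the fixed $\e^2$-exceptional set to one whose relative density inside every small base ball is $O(\e)$ (alternatively one exploits the Hölder dependence of $J_{x'}$ on $x'$ coming from the Smale structure). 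Combining with the $\nu$-estimates then yields, for $\mu$-a.e. $(x,y)$ and all small $r$,
$$r^{\alpha+\beta+C\e}\le\mu\big(B_X(x,r)\times B_Y(y,r)\big)\le r^{\alpha+\beta-C\e}$$
for an absolute constant $C$; letting $\e\downarrow 0$ gives $\varliminf_{r\to0}\log\mu(B((x,y),r))/\log r=\varlimsup_{r\to0}\log\mu(B((x,y),r))/\log r=\alpha+\beta$, i.e. $\mu$ is exact dimensional with $\HD(\mu)=\alpha+\HD(\mu\circ p_1^{-1})$, as claimed. The delicate point throughout, worth isolating as a lemma, is the multiplicative (rather than additive) transfer of the a.e. fibrewise control to the neighbouring fibres, and that is the step that genuinely uses the dynamics and not just soft measure theory.
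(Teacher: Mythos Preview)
Your overall disintegration--plus--Egorov framework is the same as the paper's, but you misdiagnose where the difficulty lies and then propose a dynamical cure for a problem that is purely measure-theoretic. The paper's proof never uses the $F$--invariance of $\mu$, the Smale structure, or any mixing; it is soft measure theory throughout.

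Here is the trick you are missing. Rather than trying to bound $\mu^{x'}(B_Y(y,r))$ for \emph{every} neighbouring fibre $x'\in B_X(x,r)$, the paper first intersects the ball with the good set. With
\[
E(n,\e)=A(n,\e)\cap p_1^{-1}(D(n,\e)),
\]
one estimates $\mu\big(E(n,\e)\cap B(z,r)\big)$ via the disintegration. For each $x'\in B_X(x,r)$, the fibre integrand is $\mu^{x'}\big(B_Y(y,r)\cap A(n,\e)_{x'}\big)$. If this slice is empty it contributes $0$; if it contains some point $(x',w)$, then $B_Y(y,r)\sbt B_Y(w,2r)$ and, since $(x',w)\in A(n,\e)$, one has $\mu^{x'}(B_Y(w,2r))\le (2r)^{\alpha-\e}$. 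Thus every fibre contributes at most $(2r)^{\alpha-\e}$ with no need to know anything about ``bad'' fibres, and integrating in $x'$ over $D(n,\e)\cap B_X(x,r)$ gives $\mu\big(E(n,\e)\cap B(z,r)\big)\lesssim r^{\alpha+\beta-2\e}$. The passage from $\mu\big(E(n,\e)\cap B(z,r)\big)$ to $\mu(B(z,r))$ is then done by the Lebesgue/Borel Density Lemma: since $\mu(E(n,\e))\to 1$, for $\mu$--a.e.\ $z\in E(n,\e)$ the ratio $\mu(B(z,r)\cap E(n,\e))/\mu(B(z,r))\to 1$. This yields $\varliminf_{r\to 0}\log\mu(B(z,r))/\log r\ge\alpha+\beta-2\e$ without any recourse to iterating $F$, to H\"older continuity of $x'\mapsto J_{x'}$, or to relative-density improvements of exceptional sets.

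For the opposite inequality one simply uses $\mu(B(z,r))\ge\mu\big(E(n,\e)\cap B(z,r)\big)$ and lower-bounds the same integral by $r^{\alpha+\beta+2\e}$. So both directions go through the intersected ball $E(n,\e)\cap B(z,r)$; the asymmetry is only that the upper bound on $\mu$ needs the density lemma to undo the intersection. Your proposed ``multiplicative transfer lemma'' and the invocation of $F^{-n}$--expansion are unnecessary, and as stated would not obviously work either: the $Y$--contraction of $F$ does not by itself force the $\mu$--exceptional set to have small relative density in every small base ball.
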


\begin{proof}
Denote the canonical projection to first coordinate by  $p_1: X\times Y \to X$. Let then
$
\nu:= \mu\circ p_1^{-1}.
$
Denote the Hausdorff dimension $\HD(\nu)$ by $\gamma$. From the exact dimensionality of the conditional measures of $\mu$, we know that for $\nu$--a.e $x \in X$ and for $\mu^x$--a.e $y \in Y$, 
$$
\lim\limits_{r\to 0} \frac{\log \mu^x(B(y, r))}{\log r} = \alpha.
$$ 
Then for any $\e\in(0,\a)$ and any integer $n\ge 1 $, consider the following Borel set in $X \times Y$:
$$
A(n, \e) 
:= \lt\{z=(x,y) \in X \times Y: \ \alpha-\e< \frac{\log \mu^x(B(y, r))}{\log r} < \alpha + \e \  \ {\rm for \ all } \ \  r \in (0,1/n)\rt\}.
$$ 
From definition it is clear that $A(n, \e) \subset A(n+1, \e)$ for all  $n \ge 1$. Moreover, setting
$$
X_Y':= \bigcap_{\e>0}\bigcup_{n=1}^\infty A(n, \e),
$$
it follows from the exact dimensionality of almost all the conditional measures of $\mu$ and from the equality of their pointwise dimensions, that 
$
\mu(X_Y') = 1.
$
For $\e>0$ and $n\ge 1$, consider also the following Borel subset of $X$:
$$
D(n , \e)
:= \lt\{x \in X: \ \gamma-\e < \frac{\log \nu(B(x, r))}{\log r} < \ga +\e  \  \ {\rm for \ all } \ \  r \in (0,1/n)\rt\}.
$$
We know that $D(n, \e) \subset D(n+1, \e)$ for all $n \ge 1$, and from the exact dimensionality of $\nu$, we obtain that for every $\e>0$, we have
$
\nu\lt(\bigcup_{n=1}^\infty D(n, \e)\rt) = 1.
$
For $\e>0$ and an integer $n \ge 1$, let us denote now 
$$
E(n, \e):= A(n, \e) \cap p_1^{-1}(D(n, \e)).
$$
Clearly from above, we have that for any $\e >0$,
\beq\label{1_2016_11_04} 
\lim_{n\to\infty}\mu(E(n, \e))=1.
\eeq
Using the definition of conditional measures similarly as in \cite{M-stable}, \cite{M-JSP11}, and the definition of $A(n, \e)$ and $D(n, \e)$, we have that, for any $z \in E(n, \e), \ x = \pi_1(z)$ and any $n\ge 1, \e>0, 0 < r <1/n$,  
\begin{equation}\label{upmu}
\begin{aligned}
\mu(E(n, \e) \cap B(z, r)) &= \int_{D(n, \e) \cap B(x, r)} \mu^y\big(B(z, r) \cap (\{y\}\times Y) \cap A(n, \e)\big) \ d\nu(y) \\
&\le \int_{D(n, \e)\cap B(x, r)} r^{\alpha-\e} \ d\nu(y) =
 r^{\alpha - \e} \nu(D(n, \e) \cap B(x, r)) \\
 &\le r^{\alpha +\ga - 2\e}
\end{aligned}
\end{equation}
Since $\mu(E(n,\e))>0$ for all $n\ge 1$ large enough, it follows from Borel Density Lemma - Lebesgue Density Theorem that, for $\mu$-a.e $z \in E(n, \e)$, we have that
$$
\lim_{r\to 0} \frac{\mu(B(z, r)\cap E(n, \e))}{\mu(B(z, r))} = 1.
$$
Thus for any $\theta >1$ arbitrary, there exists a subset $E(n, \e, \theta)$ of $E(n, \e)$, such that 
$$
\mu(E(n, \e, \theta) ) = \mu(E(n, \e)),
$$
and $\forall z \in E(n, \e, \theta)$, $\exists r(z, \theta) >0$ so that for $0 < r < \inf\{r(z, \theta),1/n\}, $ we have from \ref{upmu}:
$$\mu(B(z, r)) \le \theta \mu(E(n, \e) \cap B(z, r)) \le \theta \cdot r^{\alpha + \ga -2\e}$$
Thus for any point $z \in E(n, \e, \theta)$, we obtain $$\lim_{r\to 0} \frac{\log \mu(B(z, r))}{\log r} \ge \alpha + \ga -2\e$$
Now, since $\mu(E(n, \e, \theta)) = \mu(E(n, \e))$, it follows from \eqref{1_2016_11_04} that $\mu(\mathop{\cup}\limits_n E(n, \e, \theta)) = 1$. Hence  
$$
\mu\lt(\bi_{\e>0}\bi_{\theta>1}
\bu_{n=1}^\infty E(n, \e, \theta)\rt) = 1,
$$
and for every $z \in  \bigcap\limits_{\e>0}\bigcap\limits_{\theta>1}\bigcup\limits_n E(n, \e, \theta) $, we have the inequality
$$
\lim_{r\to 0} \frac{\log \mu(B(z, r))}{\log r} \ge \alpha + \ga.
$$
Conversely, from the exact dimensionality of $\nu$ and of the conditional measures of $\mu$, and  by taking $x = \pi_1(z)$, we have that for all $r \in (0,1/n)$, 
\begin{equation}\label{lowmu}
\begin{aligned}
\mu(B(z, r) \cap E(n, \e)) &= \int_{D(n, \e) \cap B(x, r)} \mu^y\(B(z, r) \cap A(n, \e)\cap \{y\}\times Y\) \, d\nu(y)  \ge r^{\alpha + \ga +2\e}
\end{aligned}
\end{equation}
Thus, 
$$
\mu(B(z, r)) \ge \mu(B(z, r) \cap E(n, \e)) \ge r^{\alpha + \ga +2\e},
$$
for all $z \in E(n, \e)$ and $r \in (0,1/n)$. Making use of \eqref{1_2016_11_04}, we deduce that $\mu$ is exact dimensional, and thus for $\mu$-a.e $z \in X\times Y$ we obtain the conclusion: 
$$
\mathop{\lim}\limits_{r\to 0} \frac{\log \mu(B(z, r))}{\log r}
 = \alpha + \ga$$
\end{proof}

\section{Skew products over EMR-endomorphisms.}\label{S-EMR}

We now consider EMR (expanding Markov-R\'enyi) maps on the interval, and we construct skew product endomorphisms over these maps which contract in fibers. This EMR class contains important examples of endomorphisms coded by a shift space with countable alphabet, such as the continued fractions Gauss transformation. The Manneville-Pomeau map, having an indifferent fixed point (\textit{parabolic point}), is not EMR, but one can associate to it a countable uniformly hyperbolic system by inducing via the first return map. Let us first give the definition of EMR maps from \cite{PoW}.

\begin{dfn}\label{EMR}
Let I be a closed bounded interval in $\R$, and assume that 
$
I = \bu_{n=0}^\infty I_n,
$
where $I_n$, $n \ge 0$, are closed intervals with mutually disjoint interiors. A map 
$
f:I \to I
$
is called EMR if:
\begin{itemize}
\item[(a)] For every $n\ge 0$, $f|_{\Int(I_n)}$ is a $\mathcal C^2$ map.

\,

\item[(b)] There exists an iterate of $f$ which is uniformly expanding, i.e there exist $\l>1$ and a positive integer $m$, such  that 
$$
|(f^m)'(x)| \ge \l, 
$$
for all 
$x
\in\bi_{k=0}^m f^{-k}\lt(\bu_{n \ge 0} \Int(I_n)\rt).
$
\item[(c)] The map $f$ is full Markov, i.e 
$
f(I_n)=I,
$
for every integer $n \ge 0$.

\,

\item[(d)] The map $f$ satisfies R\'enyi's condition, i.e. 
$$
\sup\lt\{\frac{|f''(x)|}{|f'(y)|\cdot |f'(z)|}:n\in\N \  \  {\rm and}  \  \  x, y, z \in I_n \rt\} < +\infty.
$$
\end{itemize}
\end{dfn} 

\fr For an EMR map $f:I\to I$, there is a coding with a shift space on countably many symbols
$$
p: \N^\N \lra I,
$$
uniquely determined by the condition that
$$
\{p(\om)\} = \bi_{n=0}^\infty f^{-n}(I_{\om_n}).
$$
Every point $x$ which never hits  the boundary of any interval $I_n$ under an iterate of $f$, has a unique such coding, i.e there exists a unique $\om \in \N^\N$ with $p(\om) = x$. Thus, the $p:E_A^+ \to X$ is injective outside a countable set.

\sp Two significant classes of examples used in the sequel are: the continued fractions map as a class in itself, and a a class derived from Manneville--Pomeau maps. The continued fractions (Gauss) map is $G:[0, 1] \to [0, 1]$, 
$$
G(x):=\lt\{\frac 1x\rt\}= \frac 1x - \lt[\frac 1x\rt]  \  {\rm for} \  x \ne 0, \ \text{and} \ f_1(0) = 0$$
A Manneville--Pomeau maps $f_{\a,a}:[0, 1] \to [0, 1]$ are defined by:
$$
f_\a(x) = x + x^{1+\alpha} \  (\text{mod} \ 1),
$$
where $\a>0$.
Notice that  $f_\a$ has an indifferent fixed point at $0$, i.e $f_\a(0)=0$ and $f_\a'(0)=1$. So $f_2$ is not an EMR map. It was shown, and it is easy to see, that some induced (first return) map of $f_\a$ is EMR. Indeed, let $c$ be the unique solution on $[0,1]$ to the equation
$$
c_\a+c_\a^\alpha=1.
$$
Obviously, both restrictions
$$
f_\a|_{[0, c_\a]} 
\  \  \  {\rm and}  \   \  \
f_\a|_{[c_\a,1]}
$$
are injective and 
$
f_\a([0, c])= [0,1]=f_\a([c,1]).
$
The first return (induced) map of $f_\a$ from $[c_\a, 1]$ onto $[c_\a, 1]$ is expanding, i.e. it satisfies condition (b) of Definition~\ref{EMR}. The reason is that $f'(x)>1$ for every $x\in(0,1]$ and if $x\in(c,1)$, then it will take sufficiently long time for $x$ to enter
$[c,1]$ again under forward iteration of $f_\a$, that the derivative of this iterate evaluated at $x$ will be larger than some constant larger that $1$. In fact, denoting the interval $[c_\a,1]$ by $I_0$, it is very easy to picture the first return map
$$
f_{\a,I_0}:I_0\to I_0
$$
Indeed, define a decreasing sequence $(a_n)_{n=0}^\infty$ inductively as follows. $a_0:=1$ and if $a_n\in I_0$, $n\ge 0$, has bee defined, then $a_{n+1}$ is the defined to be the only point in $I_0$ such that
$$
f(a_{n+1})=a_n.
$$
For every $n\ge 1$ let  
$
I_n:= [a_n, a_{n-1}].
$
Then  
$
f_2^n(I_{n+1}) = I_0
$.
 for all $n >0$, so the induced map (first return time) to $I_0$ is:
\begin{equation}\label{indf2}
f_{2, I_0}(x) = f_2^n(x), \ x \in I_{n+1}, n \ge 0
\end{equation}

The first statement of the following proposition follows from \cite{Sch}, while the second one follows from \cite{T}.
\begin{prop}\label{TS} 
Both the Gauss map $G:[0, 1] \to [0, 1]$ and the induced Manneville--Pomeau maps $f_{\a,I_0}:I_0\to I_0$, are EMR. 
\end{prop}

Consider now a general EMR map $f:I \to I$, and a skew product $F: I \times Y \to I \times Y$, where $Y\subset \R^d$ is a bounded open set, with $F(x, y) = (f(x), g(x, y))$. Recall that the symbolic lift of $F$ is the map $\hat F: \N^\N \times Y \lra \N^\N \times Y$, given by the formula: 
$$
\hat F(\omega, y)= (\sigma(\omega), g(p(\omega), y))
$$
forall $(\omega, y) \in \N^\N\times Y$. If the symbolic lift $\hat F$ is a H\"older conformal skew product Smale endomorphism, then we say by extension that $F$ is a \textit{H\"older conformal skew product endomorphism over $f$}.

\

 Recall now the observation after Definition \ref{EMR}, that the coding $\pi$ of an EMR map is injective outside a countable set; and,  from (\ref{J(X)}), the fibered limit set of $F$ is 
$$
J(I) = \bu_{x \in I} \{x\} \times J_x.
$$ Let $\phi:J(I)\to\R$ be a  H\"older continuous summable potential on $J(I)$, and let $\mu_\phi$ be its equilibrium measure defined in \eqref{1_2017_01_13}.  Then $p$ can be easily shown  to be $\mu_\phi\circ p_1^{-1}$-injective (as $\mu_\phi$ is invariant, ergodic and has full topological support).
So, as an immediate consequence of from Theorem~\ref{codedskews}, we get the  following.

\begin{thm}\label{EMRskews}(Exact dimensionality of conditional measures for EMR maps).
Let $f:I \to I$ an EMR map. Let $Y \subset \R^d$ be an open bounded set, and let $F:I \times Y \to I\times Y$ be a H\"older conformal skew product endomorphism over $f$.  
Let $\phi: J(I) \to \R$ such that 
$$
\hat\phi:=\phi\circ p_J\circ\hat\pi:\N^{\Z}\lra\R
$$
is H\"older continuous summable, where summability can be expressed as 
$$
\sum_{e \in \N} \exp\(\sup(\phi|_{p([e])\times Y})\)  < +\infty
$$
Then, with $\mu_\phi$ being the equilibrium measure of $\phi$ (see also \eqref{1_2017_01_13}), we have for $\mu_\phi\circ p_1^{-1}$--a.e $x \in I$, that the conditional measure $\mu_\phi^x$ is exact dimensional on $J_x$ and 
$$
\lim_{r\to 0} \frac{\log \mu_\phi^x(B(y, r))}{\log r} 
= \frac{h_{\mu_\phi}(F)}{\chi_{\mu_\phi}(F)}
=\HD\(\mu_\phi^x\)
$$
for $\mu_\phi^x$-a.e $y \in J_x$; equivalently for $\mu_\phi$-a.e $(x, y) \in J(I)$.
\end{thm}

\fr As an immediate consequence of this theorem, we get the following.

\begin{cor}\label{EMRskewsB}(Exact dimensionality of conditional measures for EMR maps, II).
Let $f:I \to I$ an EMR map. Let $Y \subset \R^d$ be an open bounded set, and let $F:I \times Y \to I\times Y$ be a H\"older conformal skew product endomorphism over $f$.  
Let $\phi:I\to\R$ be a potential. Set 
$$
\psi:=\phi\circ p_1:I\times Y\lra\R.
$$
Assume that 
$
\widehat\psi=\phi\circ p_1\circ p_J\circ\hat\pi:\N^{\Z}\lra\R
$
is H\"older continuous summable, where summability can be expressed as 
$
\sum_{e \in \N} \exp\(\sup(\phi|_{I_n})\) < +\infty.
$
Then, 
\begin{itemize}
\item[(a)] If $\mu_\psi$ is the equilibrium measure of $\psi$, and $\mu_\phi=\mu_\psi\circ p_1^{-1}$, then for $\mu_\phi$--a.e $x \in I$, the conditional measure $\mu_\psi^x$ is exact dimensional on $J_x$, and 
$$
\lim_{r\to 0} \frac{\log \mu_\psi^x(B(y, r))}{\log r}
= \frac{\h_{\mu_\psi}(F)}{\chi_{\mu_\psi}(F)},
$$
for $\mu_\psi^x$--a.e $y \in J_x$; hence, equivalently for $\mu_\psi$-a.e $(x,y) \in J(I)$.

\item[(b)] If $\mu_\phi$ is exact dimensional on $I$, then $\mu_\psi$ is exact dimensional on $I \times Y$. 
\end{itemize}
\end{cor}

\begin{proof}
Item (a) is an immediate consequence of Theorem~\ref{EMRskews}, while part
(b) is an immediate consequence of Theorem~\ref{globalexact}.
\end{proof}

We denote the class of potentials considered in this corollary~\ref{EMRskewsB} by $\mathcal W$.  As an immediate consequence, we get the following.

\begin{cor}\label{f1f2}
Let $f$ be either the  Gauss map $G$ or the induced map $f_{\a, I_0}$ of some Manneville-Pomeau map $f_{\a}$. Consider an open bounded set $Y \subset \R^d$, and  
$
F:I \times Y \lra I\times Y,
$
a H\"older conformal skew product endomorphism over $f$.
Let $\phi:I \lra \R$ be a potential from $\mathcal W$, and set 
$
\psi:=\phi\circ p_1:I\times Y\lra\R.
$
Then, 

\begin{itemize}
\item[(a)] If $\mu_\psi$ is the equilibrium measure of $\psi$, and $\mu_\phi:=\mu_\psi\circ p_1^{-1}$, then for $\mu_\phi$--a.e $x \in I$,  the conditional measure $\mu_\psi^x$ is exact dimensional on $J_x$, and 
$$
\lim_{r\to 0} \frac{\log \mu_\psi^x(B(y, r))}{\log r}
= \frac{\h_{\mu_\psi}(F)}{\chi_{\mu_\psi}(F)},
$$
for $\mu_\psi^x$--a.e $y \in J_x$; hence, equivalently for $\mu_\psi$-a.e $(x,y) \in J(I)$.

\item[(b)] If $\mu_\phi$ is exact dimensional on $I$, then $\mu_\psi$ is exact dimensional on $I \times Y$. 
\end{itemize}
\end{cor}

\sp In \cite{PoW}, \cite{gdms}, \cite{RoUr}, and others, there was developed and studied  the multifractal analysis of equilibrium states for for a class of H\"older continuous summable potentials. As almost always, multifractal analysis requires the use of auxiliary potential and the quantity $T(q)$ commonly referred to as a temperature. We now want to discuss exact dimensionality of equilibrium states of these auxiliary potentials. These fit  to our setting of this section.

If $\phi \in \mathcal W$, then for every $q\ge 1$ and $t>\th:=1/2$ if $f=G$, or $t>\th:=\frac{\a}{\a+1}$ if $f=f_{\a,I_0}$, the potential 
\begin{equation}\label{phiq}
\phi_{q, t}:= -t\log|f'| + q (\phi - \P(\phi)):I\lra\R,
\end{equation}
also belongs to $\mathcal W$. Since 
$
\lim_{t\downto\th}\P\(\phi_{q,t}\)=+\infty
\  \  \  {\rm and} \  \  \
\lim_{t\to+\infty}\P\(\phi_{q,t}\)=-\infty
$,
there exists a unique number $T(q)>\th$ such that
$$
\P(\phi_{q, T(q)}) = 0.
$$
We see that $\P(\phi_{1, 0}) =0$, so $T(1) = 0$. 

Consider a skew product map
$$
F:I\times Y\lra I\times Y
$$ 
over $G$ or over $f_{\a, I_0}$ as in Corollary \ref{f1f2}, and let $\phi \in \mathcal W_0$. If, as usually, $p_1:I \times Y \to I$ is the projection on the first coordinate, define the potentials on $I \times Y$, 
$$
\psi_{q, t} := \phi_{q, t} \circ p_1:I\times Y\lra\R
\  \  \  {\rm and} \  \  \
\psi_q:= \psi_{q, T(q)}:I\times Y\lra\R
$$ 
For every $q\ge 1$, let $\mu_{\psi_q}$, the equilibrium measure of $\psi_q$, defined by formula \eqref{1_2017_01_13}. Let
$$
\mu_q:=\mu_{\psi_q}\circ p_1^{-1}.
$$
The measure $\mu_q$ is called the equilibrium state (measure) for the potential $\phi_{q,T(q)}$ on $I$. We shall prove the following. 

\begin{thm}\label{f1exact}
In the above setting, if $F: I\times Y \to I \times Y$ is either a H\"older conformal skew product endomorphism over the continued fractions transformation $G$ or the induced map $f_{\a, I_0}$, $\a>0$, of the Manneville--Pomeau map $f_\a$, and if $\phi \in \mathcal W$, then for every $q\ge 1$, the measure $\mu_{\psi_q}$ is exact dimensional on $I\times Y$.
\end{thm}

\begin{proof}
From Theorems~1 and 2, and Proposition~3 of \cite{PoW} (see also \cite{gdms} and \cite{OM2}) for much more general treatement), we obtain the exact dimensionality of the measures $\mu_{\phi_q}$ on $I$. Therefore, our theorem follows directly from Corollary~\ref{f1f2}. 

\end{proof}

\section{Diophantine Approximants and the Doeblin-Lenstra Conjecture}\label{doeblin}

We want to apply the results about skew products to certain properties of Diophantine approximants, making the conjecture of Doeblin and Lenstra more general and precise. 
Recall  that the continued fraction (Gauss) transformation is given by:
\beq\label{1_2017_03_03_B}
G(x)=\lt\{\frac{1}{x}\rt\}=\frac1x-n \   \  \text{ if }  \  \  x\in \Big(\frac1{n+1},\frac1n\Big],
\eeq
and $G(x)=0$ otherwise. Then the corresponding coding map
$$
\pi_G:\N^\N\lra [0,1]
$$
is given by the formula
$$
\pi_G(\om)= [\om_1, \om_2, \ldots] = \frac{1}{\om_1+\frac{1}{\om_2+\frac{1}{\ldots + \frac{1}{\om_n+\ldots}}}},
$$
and it is a bijection between $\N^\N$ and the set of irrational numbers in $[0,1]$. 

If we truncate the representation $[\om_1, \om_2, \ldots]$ at an integer $n\ge 1$, then we obtain a rational number $p_n/q_n$, called the $n$-th convergent of $x:=\pi_G(\om)$, where $p_n, q_n \ge 1$ are relatively prime integers, and 
$$
\frac{p_n}{q_n} = [\om_1, \ldots, \om_n]
$$
If needed, we shall also denote $p_n$ and $q_n$ respectively by $p_n(\om)$ and $q_n(\om)$ or also by $p_n(x)$ and $q_n(x)$, in order to indicate their dependence on $\om$ and $x$. We will also sometimes write $\om_n(x)$ for $\om_n$.
Let us now introduce (see for eg \cite{IK}) the numbers
$$
\Theta_n:= \lt|x-\frac{p_n}{q_n}\rt|\cdot q_n^2, \  \ n \ge 1
$$
This number $\Theta_n$ also depends on $\om$ or (equivalently) on $x$ and will be also denoted by $\Theta_n(\om)$ or $\Theta_n(x)$. 
Denote:
$$
T_n=T_n(\om):=\pi_G(\sg^n(\om))= [\om_{n+1}, \om_{n+2}, \ldots], \  \ n \ge 1, \ \text{and}
$$
$$
V_n=V_n(\om):=[\om_n, \ldots, \om_1], \  \  n \ge 1
$$
We will also denote them respectively by $T_n(x)$ and $V_n(x)$.
We see that $T_n(x)$ represents the future of $x$ while the number $V_n(x)$ represents the past of $x$. It follows directly from the definitions that for every integer $n \ge 1$, we have that
\beq\label{1_2017_03_16}
V_n = \frac{q_{n-1}}{q_n}, \  \  \  \Theta_{n-1}=\frac{V_n}{1+T_nV_n},  \  \  \  {\rm and} \  \  \  \Theta_n = \frac{T_n}{1+T_nV_n}
\eeq

We will use again natural extensions systems (see for eg \cite{Ru2}, \cite{M-DCDS}, \cite{M-JSP11}, \cite{M-MZ}) which belong, in certain cases, to our class of skew products. The natural extension $\^G=G:[0,1]\times [0,1]\to [0,1]\times [0,1]$ of the Gauss map $G=G:[0,1]\to (0,1]$ is such an example, and is given by the formula
$$
\^G(x, y)= \lt(T_1(x), \frac{1}{\om_1(x)+y}\rt)
$$
It follows from this that 
$$
\^G(x,0) = \lt(T_1(x), \frac{1}{\om_1(x)}\rt) 
\  \  {\rm and } \  \  \
\^G^2(x,0) = \lt(T_2(x), \frac{1}{\om_2(x)+\frac{1}{\om_1(x)}}\rt)
$$
By induction, we obtain for every $n \ge 1$, that
$$
\^G^n(x,0) = (T_n(x),[\om_n(x), \ldots, \om_1(x)]) = (T_n(x), V_n(x))
$$

The approximation coefficients $\Theta_n$ were the object of an important Conjecture originally stated by Doeblin and reformulated in the 80's by Lenstra (see \cite{IK}), namely that for Lebesgue--a.e $x\in [0, 1)$ the frequency of appearances of $\Theta_n(x)$ in the interval $[0, t]$, $t\in [0,1]$, is given by the function $F:[0,1]\to [0,1]$ given by 
$$
F(t) = 
\begin{cases}\frac{t}{\log 2}   &{\rm if } \  t\in [0,1/2], \\
\frac{1}{\log 2}(1-t + \log 2t)  &{\rm if } \ t\in [1/2,1].
\end{cases}
$$
More precisely, the \textbf{Doeblin-Lenstra Conjecture} says that for Lebesgue--a.e. $x \in [0, 1]$ and all $t \in [0, 1]$, 
$$
\mathop{\lim}\limits_{n \to \infty} \frac{\#\{1 \le k \le n: \Theta_k(x) \le t\}}n=F(t),
$$
i.e. the above limit exists and is equal to equal to the above function $F(t)$. 

This conjecture was solved by Bosma, Jager and Wiedijk in the 1980's, see \cite{BJW}.
In the proof, they needed fundamentally the \textit{natural extension} $[0,1]\times [0,1],\^G,\^\mu_G)$, of the continued fraction dynamical system $\^G$ with the classical Gauss measure $\mu_G$ defined by 
$$
\mu_G(A)=\frac1{\log 2}\int_A\frac1{1+x}\,dx
$$
for every Borel set $A\sbt\R$.
Indeed in the expression of $\Theta_n$ we have both the future $T_n$, as well as the past $V_n$, thus the natural extensions is the right construction in this case.

\sp Let us now apply our results on skew products for the natural extension $\^G$  to the lifts of certain invariant measures, in fact some equilibrium states. 
We recall that the potentials $\phi_s:I\to\R$, given by the formula,  potentials 
$$
\phi_s(x)= -s\log|G'(x)|, \  x\in [0,1). 
$$ 
belong to the class $\cW$ for all $s >1/2$. As in Corollary~\ref{EMRskewsB}, let
$
\psi_s:=\phi_s\circ p_1:[0,1]\times [0,1]\lra\R,
$
i.e. 
$$
\psi_s(x, y) = \phi_s(x), \  \  (x, y) \in [0, 1)^2,
$$
and let $\hat\mu_s=\mu_{\psi_s}$ be the equilibrium measure of $\psi_s$ w.r.t $G$ on $[0, 1)^2$, considered in Corollary~\ref{EMRskewsB} and defined by formula \eqref{1_2017_01_13}. Furthermore, let 
$$
\mu_s:=\mu_{\phi_s}\circ p_1^{-1}
$$ 
be the equilibrium measure of $\phi_s$ considered in the same corollary. From this corollary we know that $\hat\mu_s$ is exact dimensional on $[0, 1) \times [0, 1)$. Our purpose is now to describe the asymptotic frequencies with which $\Theta_n(x)$ come close to arbitrary values, when $x$ is $\mu_s$-\textit{generic}, \ instead of $x$ in a set of full Lebesgue measure as in the original Doeblin-Lenstra conjecture.
We know from this corollary that the measure $\hat\mu_s$ is exact dimensional on $[0,1]\times [0,1]$ and we have a formula for its Hausdorff dimension.

\sp Our purpose now is to describe asymptotic frequencies with which the approximation coefficients $\Th_n(x)$ of $x\in [0,1]$ become close to certain given arbitrary values, when $x$ is Lebesgue non-generic  (i.e $x$ belongs to a set of Lebesgue measure zero). In fact, these $x$'s will be generic for equilibrium measures $\mu_s$, which except for $s=1$ are singular with respect to the Lebesgue measure.

First let us prove the following result about the asymptotic frequency of appearance of $(T_n(x), V_n(x))$ in all squares of $\R^2$, with respect to the measure $\hat \mu_s$:

\begin{thm}\label{distr}
If $s>1/2$, then for $\mu_s$-a.e $x \in [0, 1]$ and for all four real numbers $a<b$ and $c<d$, we have that  
$$
\mathop{\lim}\limits_{n \to \infty} \frac{\#\big\{k\in\{0, 1,\ld, n-1\}: (T_k(x),V_k(x)) \in (a,b) \times (c,d)\big\}}{n} = \hat \mu_s\((a,b) \times (c,d)\)
$$
\end{thm}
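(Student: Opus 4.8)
The plan is to identify the sequence $\big((T_k(x),V_k(x))\big)_{k\ge 0}$ with the forward $\^G$-orbit of the point $(x,0)$ — this is precisely the formula $\^G^n(x,0)=(T_n(x),V_n(x))$ recorded above, valid for $x\in J_\cG$, which carries full $\mu_s$-measure — and then read off the frequency from Birkhoff's Ergodic Theorem applied to the indicator of the rectangle $R=(a,b)\times(c,d)$. The subtlety is that the initial points $(x,0)$ all lie on the slice $J_\cG\times\{0\}$, a set of $\hat\mu_s$-measure zero, so Birkhoff's theorem cannot be invoked at them directly; the core of the argument will be a transfer from a $\hat\mu_s$-generic orbit lying in the same fibre.

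First I would assemble three facts, all available from earlier in the paper. (i) The measure $\hat\mu_s$ is $\^G$-ergodic: by \eqref{1_2017_01_13} it is the image of the two-sided symbolic Gibbs state $\mu_{\widehat\phi_{\th_s}}$ under the factor map $p_J\circ\hat\pi$, and that Gibbs state is ergodic by Theorem~\ref{t1111105p68}. (ii) $\hat\mu_s\circ p_1^{-1}=\mu_s$; this is the identity $\mu_{\phi_\th}\circ p_1^{-1}=\mu_{\th\circ\pi_\cS}\circ\pi_\cS^{-1}$ from the proof of Theorem~\ref{Attracting Smale 2}, taken with $\th=\th_s$ and $\cS=\cG$. (iii) $\hat\mu_s$ assigns zero mass to every vertical line $\{a\}\times\R$ and to every horizontal line $\R\times\{c\}$: the former because $\mu_s$, being exact dimensional with positive dimension $\h_{\mu_s}(G)/\chi_{\mu_s}(G)$, is non-atomic, and the latter because, by Corollary~\ref{c1_2017_03_03B}, the conditional measures of $\hat\mu_s$ on the fibres $\{x\}\times J_x$ are exact dimensional with that same positive dimension, hence non-atomic. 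Consequently $\hat\mu_s(\partial R)=0$ for every axis-parallel rectangle $R$.

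Next, fix a countable family $\cQ$ of axis-parallel rectangles with rational vertices. Birkhoff's theorem applied to each $\1_Q$, $Q\in\cQ$, produces a Borel set $\Gamma$ with $\hat\mu_s(\Gamma)=1$ along which $\frac1n\#\{0\le k<n:\^G^k(x,y)\in Q\}\to\hat\mu_s(Q)$ for all $Q\in\cQ$; sandwiching an arbitrary axis-parallel rectangle between elements of $\cQ$ from inside and outside and using (iii), one upgrades this so that on $\Gamma$ the convergence holds for every axis-parallel rectangle. Disintegrating $\hat\mu_s$ over $p_1$ and invoking (ii), for $\mu_s$-a.e.\ $x$ the fibre $\Gamma_x=\{y:(x,y)\in\Gamma\}$ has full conditional measure, so it is nonempty; pick $y(x)\in\Gamma_x$. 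The geometric input is that for every $x\in J_\cG$ the orbits of $(x,0)$ and $(x,y(x))$ are forward asymptotic: the first coordinate of $\^G^k(x,z)$ equals $T_k(x)$ regardless of $z$, while both second coordinates lie in $g_{\om_k(x)}\circ\cdots\circ g_{\om_1(x)}([0,1])$, whose diameter tends to $0$ (classical shrinking of continued-fraction cylinders, i.e.\ the fibrewise contraction \eqref{3111205}). Hence $d\big(\^G^k(x,0),\^G^k(x,y(x))\big)\to0$.

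Finally, for $\ep>0$ and all large $k$ one has $\^G^k(x,y(x))\in R_\ep^-\Rightarrow \^G^k(x,0)\in R\Rightarrow \^G^k(x,y(x))\in R_\ep^+$, where $R_\ep^-$ and $R_\ep^+$ denote $R$ shrunk, resp.\ enlarged, by $\ep$ in each coordinate. Dividing the Birkhoff counts by $n$, letting $n\to\infty$ along the generic point $(x,y(x))\in\Gamma$, and then letting $\ep\to0$ while using $\hat\mu_s(\partial R)=0$, squeezes the limit of $\frac1n\#\{0\le k<n:\^G^k(x,0)\in R\}$ to $\hat\mu_s(R)$. Since $\^G^k(x,0)=(T_k(x),V_k(x))$, this is exactly the asserted frequency. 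I expect the real work to be in item (iii) and in the transfer step — ruling out concentration of $\hat\mu_s$ on axis-parallel lines (which genuinely needs non-atomicity of both $\mu_s$ and of the fibrewise conditionals, hence the positivity of the dimension) and organizing the $\ep$-approximation so that boundary effects vanish — the ergodic-theoretic skeleton being otherwise routine.
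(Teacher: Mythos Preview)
Your proposal is correct and follows essentially the same route as the paper: exploit the formula $\^G^k(x,0)=(T_k(x),V_k(x))$, use that any orbit $\^G^k(x,y)$ in the same fibre is forward asymptotic to it by fibrewise contraction, apply Birkhoff's theorem at a $\hat\mu_s$-generic fibre point, and sandwich with $\e$-perturbed rectangles using $\hat\mu_s(\partial R)=0$. The paper only perturbs the second coordinate of $R$ (since the first coordinate of $\^G^k(x,y)$ is exactly $T_k(x)$) and permits different generic $y$'s for the inner and outer rectangles rather than fixing one $y(x)$ via a countable rational family; your organization via $\cQ$ is slightly tidier for handling the quantifier ``for all $a<b$, $c<d$'' with a single full-measure set of $x$'s, and you justify $\hat\mu_s(\partial R)=0$ explicitly via positive dimension of $\mu_s$ and of the fibre conditionals, which the paper simply asserts.
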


\begin{proof}
Denote 
$
A=(a,b) \times (c,d),
$
and for every $\e>0$ let
$$
A(\e):=(a, b) \times (c-\e, d+\e) 
\  \  \  {\rm and }  \  \  \
A(-\e) = (a, b) \times (c+\e, d-\e)
$$
Then
$$
A(-\e) \subset A \subset A(\e)
$$ 
Let $x\in[0,1]\sms \Q$. Then $x = [\om_1(x), \om_2(x), \ldots]$. Hence, there exists an integer $n_\e\ge 1$ such that for every $y \in [0, 1]$ and every integer $n\ge n_\e$, we have that
$$
\big|[\om_n(x), \om_{n-1}(x), \ldots, \om_1(x)+y] - 
[\om_n(x), \om_{n-1}(x), \ldots, \om_1(x)]\big| < \e
$$
Thus, if 
$
\^G^n(x, y)=(T^n(x),[\om_n(x), \om_{n-1}(x), \ldots, \om_1(x)+y]\in A(-\e),
$
then 
$
(T_n(x), V_n(x)) \in A,
$
and 
if $(T_n(x), V_n(x)) \in A$, then 
$
\^G^n(x, y) \in A(\e)
$.
Therefore for every $x\in[0,1]\sms \Q$ and every $y\in[0,1]$, we obtain that
\beq\label{1_2017_03_10} 
\begin{aligned}
\varliminf_{n \to \infty} \frac 1n \sum_{k=0}^{n-1} \1_{A(-\e)}(\^G^k(x, y)) 
&\le \varliminf_{n \to \infty} \frac 1n \sum_{k=0}^{n-1} \1_A(\^G^k(x,0))\le \\ 
&\le \varlimsup_{n \to \infty} \frac 1n \sum_{k=0}^{n-1} \1_A(\^G^k(x,0)) 
\le \varlimsup_{n \to \infty} \frac 1n \sum_{k=0}^{n-1} \1_{A(\e)}(\^G^k(x,y))
\end{aligned}
\eeq
Since the equilibrium measure $\hat\mu_s$ is ergodic on $[0, 1]^2$ with respect to the map $\^G$ and since $\hat\mu_s$ projects on $\mu_s$, the equilibrium state of the potential $\phi_s$, it follows from Birkhoff's Ergodic Theorem that for $\mu_s$-a.e $x \in [0, 1]$ there exist $y_1\in[0,1]$ and $y_2\in[0,1]$ such that 
$$
\lim_{n \to \infty} \frac 1n \sum_{k=0}^{n-1} \1_{A(-\e)}(\^G^k(x,y)) 
=\hat\mu_s(A(-\e)), \ \text{and}
$$
$$
\lim_{n \to \infty} \frac 1n \sum_{k=0}^{n-1} \1_{A(\e)}(\^G^k(x,y)) 
=\hat\mu_s(A(\e))
$$
Along with \eqref{1_2017_03_10} these yield
\begin{equation}\label{hatmus}
\hat\mu_s(A(-\e)) 
\le \varliminf_{n \to \infty} \frac 1n \sum_{k=0}^{n-1} \1_A(\^G^k(x,0)) 
\le \varlimsup_{n \to \infty} \frac 1n \sum_{k=0}^{n-1} \1_A(\^G^k(x,0)) 
\le \hat \mu_s(A(\e))
\end{equation}
Noting that $\hat\mu_s$ does not charge the boundary of $A$ and letting in the above inequality with $\e>0$ to $0$ over a (countable) sequence, we obtain that 
$\mu_s$-a.e $x \in [0, 1]$
$$
\mathop{\lim}\limits_{n \to\infty}\frac 1n \sum_{k=0}^{n-1}\1_A(T_k(x), V_k(x)\) = \hat\mu_s(A)
$$
\end{proof}
 We prove now that for $x \in \Lambda_s$ (recall that $\Lambda_s$ has zero Lebesgue measure, but $\mu_s$-measure equal to 1), the approximation coefficients $\Theta_n(x), \Theta_{n-1}(x)$ behave \textit{very erratically}.
The following Theorem says that for irrational numbers $x \in \Lambda_s$, the behaviour of the consecutive numbers $\Theta_k(x), \Theta_{k-1}(x)$ is chaotic, and we can estimate the asymptotic frequency that $\Theta_k(x)$ is $r$-close to some $z$, while $\Theta_{k-1}(x)$ is $r$-close to some  $z'$. This \textit{asymptotic frequency} is comparable to $r^{\delta(\hat\mu_s)}$, \textit{regardless} of the point $x \in \Lambda_s$ or the numbers $z, z'$ chosen.

\begin{thm}\label{dioph}
For every $s>1/2$, there exists a Borel set $\La_s\sbt[0,1]\sms \Q$ with $\mu_s(\La_s)=1$ and with the following properties:

 1) \ $\HD(\La_s)=\h_{\mu_s}(G)/\chi_{\mu_s}$

2) \ For every $x\in \La_s$ we have that
$
\lim_{n\to\infty}\frac 1n\log \lt|x-\frac{p_n(x)}{q_n(x)}\rt| =\chi_{\mu_s} 
$.

3) \ For every $x\in \La_s$ and $\hat\mu_s$-a.e $(z,z') \in [0, 1)^2$, we have that
$$
\lim_{r\to 0}\log\lim_{n \to \infty} \frac{\#\lt\{0\le k\le n-1:             \big(\Theta_{k}(x),\Theta_{k-1}(x)\big) \in B\lt(\frac{z}{1+zz'}, r\rt) \times B\lt(\frac{z'}{1+zz'},r\rt)\rt\}}{n} 
=\HD(\hat\mu_s)
$$
%and 

%\sp\item 
%$$
%\HD(\hat\mu_s)
%=\frac{\h_{\mu_s}(G)}{\chi_{\mu_s}}+
%\frac{\h_{\hat\mu_s}(\^G)}{2\int_{[0, 1)^2} \log(\om_1(x)+y) \, d\hat%\mu_s(x,y)}.
%$$
4)  
$
\ \{\chi_{\mu_s}\}_{s>1/2}=[\chi_{\mu_{1/2}},+\infty).
$

5) $$HD(\hat \mu_s) = \frac{h_{\mu_s}(G)}{\chi_{\mu_s}} + \frac{h_{\hat\mu_s}(\tilde G)}{2\int_{[0, 1)^2} \log(\omega_1(x)+y) \ d\hat\mu_s(x, y)}$$
\end{thm}

\

\begin{proof}
Since $\HD(\mu_s)=\h_{\mu_s}(G)/\chi_{\mu_s}$, there exists a Borel set $\La_s^*\sbt I\sms \mathbb Q$ such that:
 
a) $\mu_s(\La_s^*)=1,
$  

b) $
\HD(\La_s^*)=\h_{\mu_s}(G)/\chi_{\mu_s},
$

c) each Borel subset of $\La_s^*$ with full measure $\mu_s$ has Hausdorff dimension equal to $\h_{\mu_s}(G)/\chi_{\mu_s}$.

Define now $\La_s$ to be the set of all points $x$ in $\La_s^*$ for which c) above holds and for which the assertion of Theorem~\ref{distr} holds. By this theorem, by a result from \cite{PoW}, and by the properties a), b), c), it follows that  the set $\La_s$ satisfies the conditions 1), 2) above, and moreover that  $\mu_s(\La_s) = 1$. 

Now let us show 3). 
So fix $x\in\La_s$ and let arbitrary $z, z' \in[0,1)$. Because of formulas \eqref{1_2017_03_16} there exists some constant $C\ge 1$, such that for all radii $r>0$ we have the following two implications:

1) If for some integer $k\ge 1$, 
$
(T_k(x),V_k(x)) \in B(z, r) \times B(z', r),
$
then 
$
(\Theta_k(x), \Theta_{k-1}(x)) \in B\lt(\frac{z}{1+zz'}, Cr\rt) \times B\lt(\frac{z'}{1+zz'}, Cr\rt).
$

2) 
If 
$
(\Theta_k(x), \Theta_{k-1}(x)) \in B\lt(\frac{z}{1+zz'}, r\rt) \times B\lt(\frac{z'}{1+zz'}, r\rt),
$
then 
$
(T_k, V_k) \in B(z, Cr) \times B(z', Cr).
$

\sp\fr It therefore follows from Theorem~\ref{distr} that 
\beq\label{2_2017_03_16} 
\begin{aligned}
\hat\mu_s\(B(z, C^{-1}&r) \times B(z', C^{-1}r)\) \le \\
&\le \lim_{n \to \infty} \frac{\#\lt\{0\le k\le n-1:             \big(\Theta_{k}(x),\Theta_{k-1}(x)\big) \in B\lt(\frac{z}{1+zz'}, r\rt) \times B\lt(\frac{z'}{1+zz'},r\rt)\rt\}}{n} \le \\ 
& \  \  \  \  \  \  \  \  \  \  \  \  \  \  \  \  \  \  \  \  \  \   \le \hat\mu_s\(B(z, Cr) \times B(z', Cr)\).
\end{aligned}
\eeq
Since by Corollary~\ref{EMRskewsB} the 
%Corollary~\ref{c1_2017_03_03B} 
measure $\hat\mu_s$ is dimensional exact, we have that 
$$
\lim_{r\to 0}\frac{\log\hat\mu_s\(B(z, C^{-1}r) \times B(z', C^{-1}r)\)}
{\log r}
=\HD(\hat\mu_s)
=\lim_{r\to 0}\frac{\log\hat\mu_s\(B(z, Cr) \times B(z', Cr)\)}
{\log r}.
$$
Along with \eqref{2_2017_03_16} this finishes the proof of formula 3).
By  \cite{PoW}, \cite{gdms}, the function $(1/2,+\infty)\ni s\longmapsto \chi_{\mu_s}$ is strictly increasing and $\lim_{s\to\infty}\chi_{\mu_s}=\infty$. Hence 4) follows. 

Next we compute the Lyapunov exponent of the contraction $y \to \frac{1}{\omega_1(x)+y}$ in the fiber over $x$, which is equal to $2\int_{[0, 1)^2} \log\big(\omega_1(x)+y\big) \ d\hat\mu_s(x, y)$, \ and we have that the entropy of the natural extension is $h_{\hat\mu_s}(\tilde G)$ (see also \cite{M-DCDS}, \cite{M-JSP11}). Hence we use Theorems \ref{codedskews}  and \ref{f1exact} to obtain the Hausdorff dimension of the lift measure $\hat\mu_s$ on $[0, 1) \times [0, 1)$, thus  proving 5). In conclusion all the statements of the Theorem are proved. 

\end{proof}

\textbf{Acknowledgements:} \ The authors thank the referee(s) for several useful suggestions. Eugen Mihailescu thanks the Institut des Hautes \'Etudes Sci\'entifiques, Bures-sur-Yvette, France, for support during a  stay when part of this paper was done.

\

\end{document}